\numberwithin{equation}{section}
\theoremstyle{plain}
\newtheorem{theorem}{Theorem}[section]
\newtheorem*{theorem*}{Theorem}
\newtheorem{conj}{Conjecture}
\newaliascnt{lemma}{theorem}
\newtheorem{lemma}[lemma]{Lemma}
\newaliascnt{proposition}{theorem}
\newtheorem{proposition}[proposition]{Proposition}
\newaliascnt{corollary}{theorem}
\newtheorem{corollary}[corollary]{Corollary}
\theoremstyle{definition}
\newaliascnt{remark}{theorem}
\newtheorem{remark}[remark]{Remark}
\newaliascnt{example}{theorem}
\newtheorem{example}[example]{Example}
\Crefname{lemma}{Lemma}{Lemmas}
\Crefname{proposition}{Proposition}{Propositions}
\Crefname{corollary}{Corollary}{Corollaries}
\Crefname{conj}{Conjecture}{Conjectures}
\Crefname{remark}{Remark}{Remarks}
\Crefname{example}{Example}{Examples}
\def\geq{\geqslant}
\def\leq{\leqslant}
\renewcommand{\subset}{\subseteq}
\newcommand{\dif}{\mathop{}\!\mathrm{d}}
\def\dx{\dif x}
\def\dy{\dif y}
\newcommand{\poch}[2]{(#1)_{{#2}}}
\newcommand{\qpoch}[3]{(#1;#2)_{{#3}}}
\newcommand{\qnum}[2][q]{[#2]_#1}
\newcommand{\tempbinom}{\genfrac{[}{]}{0pt}{}}
\newcommand{\qbinom}[3][q]{\tempbinom{#2}{#3}_{#1}}
\relax\newcommand{\=}{\mathrel{\phantom{=}}}
\newcommand{\wm}{\mathrel{\succcurlyeq_\mathrm{w}}}
\newcommand{\m}{\mathrel{\succcurlyeq}}
\newcommand\R{\mathbb{R}}
\newcommand\Q{\mathbb{Q}}
\newcommand\Z{\mathbb{Z}}
\newcommand\F{\mathbb{F}}
\def\fp{\mathbb F_{\geqslant0}}
\def\kp{\mathbb K_{\geqslant0}}
\newcommand*{\SetSuchThat}[1][]{} 
\newcommand*{\MvertSets}{%
	\renewcommand*\SetSuchThat[1][]{%
		\mathclose{}%
		\nonscript\;##1\vert\penalty\relpenalty\nonscript\;%
		\mathopen{}%
	}%
}
\DeclarePairedDelimiterX \Set [2] {\lbrace}{\rbrace}
{\,#1\SetSuchThat[\delimsize]#2\,}
\newcommand{\mydef}[1]{\textbf{#1}}
\begin{document}
	
	\title{Majorization via positivity of Jack and Macdonald polynomial
		differences}
	
	\author{Hong Chen}
	\address[H.~Chen]{Department of Mathematics, Rutgers University,
		Piscataway 08854, USA}
	\email{\tt hc813@math.rutgers.edu}
	
	\author{Apoorva Khare}
	\address[A.~Khare]{Department of Mathematics, Indian Institute of
		Science, Bangalore -- 560012, India; and Analysis and Probability
		Research Group, Bangalore -- 560012, India}
	\email{\tt khare@iisc.ac.in}
	
	\author{Siddhartha Sahi}
	\address[S.~Sahi]{Department of Mathematics, Rutgers University,
		Piscataway 08854, USA}
	\email{\tt sahi@math.rutgers.edu}
	
	\date{\today}
	
	\keywords{Majorization, weak majorization, Jack polynomials,
		Macdonald polynomials, Schur polynomials, Muirhead semiring}
	
	\subjclass[2020]{Primary 05E05, 
		33D52; 
		Secondary 06A07, 
		26D05} 

	\begin{abstract}
		Majorization inequalities have a long history, going back
		to Maclaurin and Newton. They were recently studied for
		several families of symmetric functions, including by
		Cuttler--Greene--Skandera (2011), Sra (2016), Khare--Tao
		(2021), McSwiggen--Novak (2022), and Chen--Sahi (2024+)
		among others. Here we extend the inequalities by these
		authors to Jack and Macdonald polynomials, and obtain
		conjectural characterizations of majorization and of weak
		majorization of the underlying partitions. We prove these
		characterizations for two variables.
		In fact, we upgrade---and prove in the above cases---the
		characterization of majorization, to containment of Jack and Macdonald
		differences lying in the Muirhead semiring.
	\end{abstract}
	
	\maketitle

	\settocdepth{section}
	\tableofcontents
	
	\section{Introduction}
	
	In this section, we first recall some of the history of symmetric
	functions and of inequalities associated to them.
	
	\subsection{Symmetric functions}
	
	Symmetric polynomials and symmetric functions have a long history and are
	ubiquitous in mathematics and physics, especially in algebra and
	representation theory, combinatorics, probability, and
	statistics.
	We give a very brief history of the symmetric functions below. For more details, see \cite{KS06}.
	
	One of the most classical results is the relation between the
	\mydef{elementary symmetric polynomials} and the \mydef{power sums},
	studied by Newton around 1666. These relations, together
	with Vieta's formulas, enable one to find the power sums of the roots of
	a univariate polynomial, without finding the roots explicitly.
	
	Since the work of Jacobi in the mid-nineteenth century and the
	papers of Frobenius, Schur, Weyl, MacMahon, and
	Young in the early twentieth century, \mydef{Schur polynomials} have been
	well studied, and play a significant role in the representation theory of
	the symmetric group $S_n$ and the complex general linear group
	$GL_n(\mathbb C)$. 
	
	Around 1960, Hall and Littlewood independently introduced a one-parameter
	generalization of the Schur functions, now known as the
	\mydef{Hall--Littlewood polynomials}. By the work of Green and Macdonald,
	these functions are related to the representation theory of $GL_n$ over
	finite and $p$-adic fields.
	
	In the late 1960s, Jack discovered \mydef{Jack polynomials}, unifying
	Schur polynomials and \mydef{zonal polynomials}---the latter being
	related to the representation theory of $GL_n(\mathbb R)$ and multivariate
	statistics.
	
	Hall--Littlewood polynomials and Jack polynomials are quite different
	generalizations of Schur polynomials. In the 1980s, Macdonald unified
	these developments into a two-parameter family of symmetric polynomials,
	the \mydef{Macdonald polynomials}. Hall--Littlewood polynomials can be
	obtained from Macdonald polynomials by specializing one of the parameters
	to~0; while Jack polynomials arise as the limiting case when both
	parameters approach~1.
	
	\subsection{Symmetric function inequalities}
	
	The notion of positivity has an even longer history: it begin with
	mathematics itself---counting and measuring objects. Let us restrict our
	attention to inequalities about symmetric functions. 
	
	Most families of symmetric polynomials are indexed by integer partitions.
	A \mydef{partition} (of length at most $n$) is an $n$-tuple $\lambda=(\lambda_1,\dots,\lambda_n)$ of integers such that $\lambda_1\geq\cdots\geq\lambda_n\geq0$. 
	Denote the set of such partitions by $\mathcal P_n$.
	Let $|\lambda|\coloneqq\lambda_1+\cdots+\lambda_n$ be the size of $\lambda$.
	
	Recall that for partitions $\lambda$ and $\mu$ in $\mathcal P_n$, one has the following partial orders:
	\begin{itemize}
		\item $\lambda$ \mydef{contains} $\mu$, denoted by $\lambda\supseteq\mu$, if $\lambda_i\geq\mu_i$ for $i=1,\dots,n$;
		\item $\lambda$ \mydef{weakly majorizes} $\mu$, denoted by $\lambda\wm\mu$, if $\displaystyle\sum_{i=1}^k \lambda_i \geq\sum_{i=1}^k \mu_i$, for $k=1,\dots,n$;
		\item $\lambda$ \mydef{majorizes} $\mu$, denoted by $\lambda\m\mu$, if $\lambda\wm\mu$ and $|\lambda|=|\mu|$.
	\end{itemize}
	Throughout the paper, $\bm1=\bm1_n\coloneqq(1,\dots,1)$ ($n$ times).
	
	We now recall some symmetric function inequalities related to the three
	partial orders. The notation used here and below is found in
	Section~\ref{Sprelim}.
	
	\subsubsection{Containment-type and expansion positivity}
	
	Of great importance in representation theory and algebraic combinatorics is the notion of \mydef{expansion positivity}.
	
	We say a symmetric polynomial is \mydef{Schur positive} if it can be written as a sum of Schur polynomials with non-negative coefficients.
	For example, by the celebrated Littlewood--Richardson rule, the product of two Schur polynomials is Schur positive:
	$\displaystyle s_\lambda\cdot s_\mu = \sum_{\nu} c_{\lambda\mu}^\nu s_\nu$,
	where the Littlewood--Richardson coefficients $c_{\lambda\mu}^\nu\geq0$.
	Similarly, the Schur polynomial $s_\lambda$ is \mydef{monomial positive}:
	$\displaystyle s_\lambda = \sum_{\mu\m\lambda} K_{\lambda\mu}m_\mu$,
	where the Kostka numbers $K_{\lambda\mu}\geq0$.
	
	Expansion positivity naturally suggests underlying combinatorial objects or structure. 
	For example, Littlewood--Richardson coefficients have various combinatorial interpretations: Littlewood--Richardson tableaux \cite{Stanley,Mac15}, Berenstein–Zelevinsky patterns \cite{BZ92} and Knutson–Tao honeycombs \cite{KT99}; and Kostka numbers count the number of semi-standard Young tableaux \cite{Stanley,Mac15}.
	
	For Jack polynomials $P_\lambda^{(\alpha)}(x)$, it was conjectured in Macdonald's book \cite[VI.(10.26?)]{Mac15} and proved by Knop and Sahi \cite{KS} that their expansions in terms of the monomial basis $(m_\lambda)$ are positive (and integral, under certain normalizations) in the parameter $\alpha$.
	
	The following binomial formula for Jack polynomials is due to
	Okounkov and Olshanski \cite{OO97}:
	\begin{align}
		\frac{P_\lambda^{(\alpha)}(x+\bm1)}{P_\lambda^{(\alpha)}(\bm1)} &= \sum_{\nu\subseteq\lambda}\binom{\lambda}{\nu}_\alpha \ \frac{P_\nu^{(\alpha)}(x)}{P_\nu^{(\alpha)}(\bm1)}.
	\end{align}
	The coefficient $\binom{\lambda}{\nu}_\alpha$ is known as the \mydef{generalized binomial coefficient}.
	Sahi \cite{Sahi-Jack} showed that $\binom{\lambda}{\nu}_\alpha$ is non-negative (i.e.\ lies in a positive cone $\fp$ defined in \cref{eqn:fp}) and hence $\frac{P_\lambda^{(\alpha)}(x+\bm1)}{P_\lambda^{(\alpha)}(\bm1)}$ is \mydef{Jack positive}.
	Recently, Chen and Sahi proved \cite{CS24} that the coefficient $\binom{\lambda}{\nu}_\alpha$ is monotone in $\lambda$:
	\begin{align}
		\binom{\lambda}{\nu}_\alpha-\binom{\mu}{\nu}_\alpha\in\fp,	\quad  \lambda\supseteq\mu,
	\end{align}
	which, in turn, leads to the Jack positivity of $\frac{P_\lambda^{(\alpha)}(x+\bm1)}{P_\lambda^{(\alpha)}(\bm1)}-\frac{P_\mu^{(\alpha)}(x+\bm1)}{P_\mu^{(\alpha)}(\bm1)}$ for $\lambda \supseteq \mu$:
	\begin{align}
		\frac{P_\lambda^{(\alpha)}(x+\bm1)}{P_\lambda^{(\alpha)}(\bm1)} -\frac{P_\mu^{(\alpha)}(x+\bm1)}{P_\mu^{(\alpha)}(\bm1)} 
		= \sum_{\nu\subseteq\lambda} \left(\binom{\lambda}{\nu}_\alpha-\binom{\mu}{\nu}_\alpha\right) \frac{P_\nu^{(\alpha)}(x)}{P_\nu^{(\alpha)}(\bm1)}.
	\end{align}
	
	The study of generalized binomial coefficients originated in two distinct contexts. In statistics, their zonal case ($\alpha=2$) was first examined by Constantine in the 1960s, while Lascoux~\cite{Lascoux} explored their Schur case ($\alpha=1$) to compute Chern classes for exterior and symmetric squares of vector bundles (see also \cite[p.~47, Example 10]{Mac15}). 
	Seminal contributions emerged in the 1990s through the foundational work of Lassalle~\cite{Las90, Las98}, Kaneko~\cite{Kan93}, and Okounkov--Olshanski~\cite{OO-schur, OO97}.
	
	Generalized binomial coefficients can be explicitly computed via
	evaluating \mydef{interpolation Jack polynomials}, first studied
	by Knop--Sahi~\cite{Sahi94,KS96} and Okounkov--Olshanski~\cite{OO97}. Recently,
	the $BC$-symmetry analogue of binomial coefficients and interpolation
	polynomials were studied in~\cite{CS24}.
	
	In what follows, we write Jack polynomials with parameter $\tau=1/\alpha$, that is, our $P_\lambda(x;\tau)$ is equal to Macdonald's $P_\lambda^{(1/\tau)}(x)$.
	
	\begin{theorem}[{\cite[Theorem 6.1]{CS24}}]\label{thm:contain-J}
		The following are equivalent for partitions $\lambda, \mu$:
		\begin{enumerate}
			\item The difference 
			$\displaystyle
			\frac{P_\lambda(x+\bm1;\tau)}{P_\lambda(\bm1;\tau)}-\frac{P_\mu(x+\bm1;\tau)}{P_\mu(\bm1;\tau)}$ is \mydef{Jack positive} over $\fp$, namely, can be written as an
			$\fp$-combination of $P_\nu(x;\tau)$.
			\item For every $\tau_0\in[0,\infty]$, the difference $\displaystyle \frac{P_\lambda(x+\bm1;\tau_0)}{P_\lambda(\bm1;\tau_0)}-\frac{P_\mu(x+\bm1;\tau_0)}{P_\mu(\bm1;\tau_0)}$ is \mydef{$\tau_0$-Jack positive} over $\R_{\geq0}$, namely, can be written as an $\mathbb R_{\geq0}$-combination of $P_\nu(x;\tau_0)$.
			\item For some $\tau_0\in[0,\infty]$, the difference $\displaystyle \frac{P_\lambda(x+\bm1;\tau_0)}{P_\lambda(\bm1;\tau_0)}-\frac{P_\mu(x+\bm1;\tau_0)}{P_\mu(\bm1;\tau_0)}$ is \mydef{$\tau_0$-Jack positive} over $\R_{\geq0}$, namely, can be written as an $\mathbb R_{\geq0}$-combination of $P_\nu(x;\tau_0)$.
			\item $\lambda$ contains $\mu$, $\lambda\supseteq\mu$.
		\end{enumerate}
	\end{theorem}
	
	It is well-known, see \cite[Chapters I, VI, VII]{Mac15}, that Jack polynomials $P_\lambda(x;\tau)$ specialize to many other symmetric polynomials: 
	monomial symmetric polynomials $m_\lambda$ when $\tau=0$,
	zonal polynomials $Z_\lambda$ when $\tau=1/2$ (spherical functions for $GL_n(\mathbb{R})/O_n$) and
	$\tau=2$ (for $GL_n(\mathbb{H})/Sp_n$),
	Schur polynomials $s_\lambda$ when $\tau=1$,
	and elementary symmetric polynomials $e_{\lambda'}$ when $\tau=\infty$
	(where $\lambda'$ is the conjugate of $\lambda$).
	Hence, we also have the following characterization, which somewhat
	surprisingly also holds for power sums:
	
	\begin{theorem}[{\cite[Theorems 6.2, 6.3]{CS24}}]\label{thm:contain-J0}
		The following are equivalent for partitions $\lambda, \mu$:
		\begin{enumerate}
			\item $\lambda$ contains $\mu$.
			\item The difference $\displaystyle
			\frac{m_\lambda(x+\bm1)}{m_\lambda(\bm1)}-\frac{m_\mu(x+\bm1)}{m_\mu(\bm1)}$
			is \mydef{monomial positive}.
			\item The difference $\displaystyle
			\frac{Z_\lambda(x+\bm1)}{Z_\lambda(\bm1)}-\frac{Z_\mu(x+\bm1)}{Z_\mu(\bm1)}$
			is \mydef{zonal positive}.
			\item The difference 
			$\displaystyle
			\frac{s_\lambda(x+\bm1)}{s_\lambda(\bm1)}-\frac{s_\mu(x+\bm1)}{s_\mu(\bm1)}$
			is \mydef{Schur positive}.
			\item The difference $\displaystyle \frac{e_\lambda'(x+\bm1)}{e_\lambda'(\bm1)}-\frac{e_\mu'(x+\bm1)}{e_\mu'(\bm1)}$ is \mydef{elementary positive}.
			\item The difference $\displaystyle \frac{p_\lambda(x+\bm1)}{p_\lambda(\bm1)}
			- \frac{p_\mu(x+\bm1)}{p_\mu(\bm1)}$ is \textbf{power sum positive}.
		\end{enumerate}
	\end{theorem}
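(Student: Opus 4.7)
The plan is to handle statements (2)--(5) uniformly via specialization of Theorem~\ref{thm:contain-J}, and then treat the power-sum statement (6) separately. For (2)--(5), recall that the Jack polynomial $P_\lambda(x;\tau)$ specializes to $m_\lambda$ at $\tau=0$, to a scalar multiple of $Z_\lambda$ at $\tau=1/2$, to $s_\lambda$ at $\tau=1$, and to a scalar multiple of $e_{\lambda'}$ at $\tau=\infty$. Since the ratio $P_\lambda(x+\bm1;\tau)/P_\lambda(\bm1;\tau)$ is invariant under rescaling, applying Theorem~\ref{thm:contain-J}(1)$\Leftrightarrow$(2) at $\tau_0\in\{0,1/2,1,\infty\}$ (and using that each specialized family is a basis) yields the four equivalences (1)$\Leftrightarrow$(2), (3), (4), (5).

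For the forward direction of (1)$\Leftrightarrow$(6), I would pad $\lambda$ and $\mu$ to a common length so that $p_\lambda(x+\bm1)/p_\lambda(\bm1)=\prod_i p_{\lambda_i}(x+\bm1)/n$ (using $p_0(x+\bm1)/n=1$), and then exploit the expansion
\begin{equation*}
\frac{p_k(x+\bm1)}{n} = 1+\frac1n\sum_{j=1}^{k}\binom{k}{j} p_j(x).
\end{equation*}
This shows at once that each factor $p_{\lambda_i}(x+\bm1)/n$ is power-sum positive, and that $p_{\lambda_i}(x+\bm1)/n - p_{\mu_i}(x+\bm1)/n$ is power-sum positive whenever $\lambda_i\geq\mu_i$ (by monotonicity $\binom{k}{j}\geq\binom{k'}{j}$ for $k\geq k'$). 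The telescoping identity
\begin{equation*}
\prod_i a_i-\prod_i b_i=\sum_i(a_i-b_i)\prod_{j<i}a_j\prod_{j>i}b_j
\end{equation*}
then expresses the difference as a sum of products of power-sum positive quantities, completing (1)$\Rightarrow$(6).

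For the converse (6)$\Rightarrow$(1) I would extract the coefficient of the specific monomial $p_\mu(x)$ in the power-sum expansion. Expanding $p_\lambda(x+\bm1)/p_\lambda(\bm1)$ as above, this coefficient is, up to a positive rational, a sum over injections $\phi\colon[\ell(\mu)]\hookrightarrow[\ell(\lambda)]$ of the weights $\prod_j\binom{\lambda_{\phi(j)}}{\mu_j}$; such a $\phi$ contributes non-trivially precisely when $\lambda_{\phi(j)}\geq\mu_j$ for all $j$. The crucial combinatorial observation is that the mere existence of any such $\phi$ already forces $\lambda\supseteq\mu$: for each $j$, the indices $\phi(1),\ldots,\phi(j)$ are $j$ distinct positions at which $\lambda$ is $\geq\mu_j$, so by monotonicity of $\lambda$ we conclude $\lambda_j\geq\mu_j$. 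Meanwhile the self-coefficient of $p_\mu$ in $p_\mu(x+\bm1)/p_\mu(\bm1)$ equals $1/n^{\ell(\mu)}>0$ (only the identity assignment $k_i=\mu_i$ contributes, again by monotonicity of $\mu$). Hence $\lambda\not\supseteq\mu$ forces the coefficient of $p_\mu$ in the difference to be $-1/n^{\ell(\mu)}<0$, contradicting power-sum positivity.

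The main obstacle is choosing the right witnessing coefficient for the converse of (6). Natural candidates fail in general: for example $\lambda=(3,3)$, $\mu=(4,2)$ has $\lambda\not\supseteq\mu$ yet $e_k(\lambda)\geq e_k(\mu)$ for all $k\geq 1$, so every $p_1^k$-coefficient is non-negative; and individual $p_k$-coefficients behave erratically because the ``larger'' parts of $\lambda$ can compensate for a violated inequality $\lambda_s<\mu_s$ at some later index~$s$. The insight that $p_\mu$ itself is \emph{always} the correct witness --- via the pigeonhole-style equivalence between the existence of a valid injection $\phi$ and the containment $\lambda\supseteq\mu$ --- is the heart of the argument.
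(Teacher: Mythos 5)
The paper does not prove this theorem itself; it cites \cite[Theorems~6.2, 6.4]{CS24}, and the only hint of an argument is the remark preceding the statement that parts~(2)--(5) follow by specializing \cref{thm:contain-J} at $\tau_0 = 0, 1/2, 1, \infty$, while the power-sum part~(6) is flagged as ``somewhat surprising'' precisely because power sums are not Jack specializations. Your treatment of (2)--(5) is exactly this specialization argument, and is correct: the normalized ratios are scale-invariant, each specialized family is a basis, and \cref{thm:contain-J}(1)$\Leftrightarrow$(2) applies verbatim at each $\tau_0$.

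Your argument for~(6) is self-contained and, as far as I can tell, correct. For (1)$\Rightarrow$(6): after padding, the binomial expansion $p_k(x+\bm1)/n = 1 + \tfrac1n\sum_{j\geq1}\binom{k}{j}p_j(x)$, the monotonicity $\binom{k}{j}\geq\binom{k'}{j}$ for $k\geq k'$, and the telescoping identity do produce a manifestly power-sum-positive decomposition, since a product of power-sum-positive elements is power-sum-positive (as $p_\nu p_\sigma = p_{\nu\cup\sigma}$ and $1=p_\emptyset$). For (6)$\Rightarrow$(1), the choice of $p_\mu$ as the witnessing coefficient is the right idea, and your Hall/pigeonhole argument --- that an injection $\phi$ with $\lambda_{\phi(j)}\geq\mu_j$ for all $j$ exists if and only if $\lambda\supseteq\mu$, using the monotonicity of both sequences --- is correct. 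Two minor imprecisions, neither of which affects the conclusion: the phrase ``only the identity assignment contributes'' to the $p_\mu$-coefficient of $p_\mu(x+\bm1)/p_\mu(\bm1)$ overstates things when $\mu$ has repeated parts (several injections contribute, all with the same positive weight), and the claimed value $1/n^{\ell(\mu)}$ is then a lower bound rather than the exact coefficient; all you actually need is strict positivity, which does hold. The $\lambda=(3,3)$, $\mu=(4,2)$ example is a genuinely useful sanity check showing that $e_k$-comparisons (i.e.\ $p_1^k$-coefficients) cannot detect failure of containment, so the $p_\mu$-witness is not an obvious choice. Overall: correct, and for part~(6) it supplies an argument the present paper merely cites.
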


	\subsubsection{Majorization-type and evaluation positivity}
	
	As the name suggests, evaluation positivity is about evaluating
	real polynomials, which is more commonly seen in classical
	analysis. We begin with the special case of majorization:
	$(k+1,k-1)\m(k,k)$ for $k\geq1$.
	
	It is a classical result, dating back to Newton, that polynomials with non-positive real roots have \mydef{ultra log-concave} coefficients. More precisely, for $$f(t)=\prod_{i=1}^n (t+x_i)=\sum_{k=0}^n e_k(x)\, t^{n-k},$$
	Newton’s inequality asserts  
	\begin{align}\label{eqn:Newton}
		\frac{e_{k+1}(x)}{e_{k+1}(\bm1)} \cdot \frac{e_{k-1}(x)}{e_{k-1}(\bm1)} 
		\leq \left(\frac{e_k(x)}{e_k(\bm1)}\right)^2, \qquad x\in[0,\infty)^n.
	\end{align}
	
	Equivalently, the \textit{normalized} elementary symmetric polynomials
	$a_k={e_k(x)}/{e_k(\bm1)}$ form a \mydef{log-concave} sequence:
	$a_{k+1}a_{k-1}\leq a_k^2$. A familiar instance of log-concavity is given
	by the binomial coefficients  
	$\displaystyle \binom{n}{0},\binom{n}{1},\dots,\binom{n}{n}$.
	Log-concavity also plays a central role in combinatorial Hodge theory, as developed by Adiprasito--Huh--Katz~\cite{AHK}.

	In 1959, long after Newton, Gantmacher \cite{Gantmacher} proved a similar
	inequality for power sums:
	\begin{align}
		p_{k+1}(x)p_{k-1}(x) \geq p_k(x)^2,	\quad x\in[0,\infty)^n.
	\end{align}
	Note that $p_k(\bm1_n)=n$ for any $k$, and hence one can normalize
	Gantmacher's inequality in the same way as Newton's.
	
	For partitions $\lambda$ and $\mu$ with $|\lambda|=|\mu|$, Muirhead showed in \cite{Muirhead} that for the monomial symmetric polynomial $m_\lambda$
	\begin{align}\label{EMuirhead}
		\frac{m_\lambda(x)}{m_\lambda(\bm1)}\geq \frac{m_\mu(x)}{m_\mu(\bm1)}, 	\quad x\in [0,\infty)^n
	\end{align}
	if and only if $\lambda\m\mu$. (Note that~\eqref{EMuirhead} holds even
	when $\lambda\in\R^n$, in which case, $m_\lambda$ is defined as the
	orbit-sum of $x^\eta=x_1^{\eta_1}\cdots x_n^{\eta_n}$, where $\eta$ runs
	over the orbit $S_n\cdot\lambda$, acting by permutation.)
	
	Cuttler, Greene, and Skandera \cite{CGS} generalized the inequalities of Newton and Gantmacher to elementary symmetric polynomials and power sums indexed by partitions, and conjectured an inequality for Schur polynomials, which
	was later proved by Sra \cite{Sra} (again, for $\lambda, \mu$ real
	$n$-tuples). We recall all these results.
	
	\begin{theorem}\label{thm:majorization}
		Let $\lambda$ and $\mu$ be integer partitions with $|\lambda|=|\mu|$.
		The following are equivalent:
		\begin{enumerate}
			\item $\lambda$ majorizes $\mu$, $\lambda\m\mu$.
			\item (\cite{Muirhead}, Muirhead's inequality) The following difference is positive:
			\begin{align}
				\frac{m_\lambda(x)}{m_\lambda(\bm1)} - \frac{m_\mu(x)}{m_\mu(\bm1)}\geqslant0, \quad \forall x\in[0,\infty)^n.
			\end{align}
			\item (\cite[Theorem 3.2]{CGS}, generalized Newton's inequality) The following difference is positive:
			\begin{align}
				\frac{e_{\lambda'}(x)}{e_{\lambda'}(\bm1)} - \frac{e_{\mu'}(x)}{e_{\mu'}(\bm1)}\geqslant0, \quad \forall x\in[0,\infty)^n.
			\end{align}
			\item (\cite[Theorem 4.2]{CGS}, generalized Gantmacher's inequality) The following difference is positive:
			\begin{align}
				\frac{p_\lambda(x)}{p_\lambda(\bm1)} - \frac{p_\mu(x)}{p_\mu(\bm1)}\geqslant0, \quad \forall x\in[0,\infty)^n.
			\end{align}
			\item (\cite[Conjecture~7.4, Theorem~7.5]{CGS} and \cite{Sra}, Cuttler--Greene--Skandera and Sra's inequality) The following difference is positive:
			\begin{align}
				\frac{s_\lambda(x)}{s_\lambda(\bm1)} - \frac{s_\mu(x)}{s_\mu(\bm1)}\geqslant0, \quad \forall x\in[0,\infty)^n.
			\end{align}
		\end{enumerate}
	\end{theorem}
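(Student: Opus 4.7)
The plan is to route all equivalences through statement~$(1)$, proving $(1)\Leftrightarrow(i)$ for each $i\in\{2,3,4,5\}$. The forward implications are the substantive inequalities cited: Muirhead for~$(2)$, Cuttler--Greene--Skandera for~$(3)$ and~$(4)$, and Sra for~$(5)$. The reverse implications $(i)\Rightarrow(1)$ each follow by evaluating the hypothesized non-negative difference at a suitable one-parameter family of points in $[0,\infty)^n$ and extracting partial-sum information about~$\lambda$.

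For $(2)\Rightarrow(1)$, $(3)\Rightarrow(1)$, and $(5)\Rightarrow(1)$, I would specialize at $x_t:=(\underbrace{t,\dots,t}_{k},\underbrace{1,\dots,1}_{n-k})$ for each fixed $k\in\{1,\dots,n\}$. In each case the normalized value $f_\lambda(x_t)/f_\lambda(\bm 1)$ has leading $t$-exponent exactly $\lambda_1+\cdots+\lambda_k$ with a positive coefficient: immediate for $m_\lambda$; for $e_{\lambda'}=\prod_j e_{\lambda'_j}$ it follows from $e_j(x_t)\sim\binom{k}{\min(j,k)}t^{\min(j,k)}$ together with the identity $\sum_j\min(\lambda'_j,k)=\lambda_1+\cdots+\lambda_k$ (counting the cells of $\lambda$ in the first $k$ rows); and for $s_\lambda$ it follows from the classical Kostka expansion $s_\lambda=\sum_\mu K_{\lambda\mu}m_\mu$, which is unitriangular in dominance order. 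Non-negativity for all large $t$ then forces $\sum_{i\leq k}\lambda_i\geq\sum_{i\leq k}\mu_i$, and together with $|\lambda|=|\mu|$ this gives $\lambda\m\mu$. For $(4)\Rightarrow(1)$ the same probe captures only $|\lambda|=|\mu|$ in leading order, so a finer analysis of the polynomial $t\mapsto p_\lambda(t,1,\dots,1)/p_\lambda(\bm 1)-p_\mu(t,1,\dots,1)/p_\mu(\bm 1)$ on $[0,\infty)$ (or an alternative multi-variable test point) is needed to recover the remaining partial-sum inequalities.

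For the forward direction $(1)\Rightarrow(2)$, I would invoke the Hardy--Littlewood--Polya reduction of $\lambda\m\mu$ to a finite sequence of $T$-transforms $(\lambda_i,\lambda_j)\mapsto(\lambda_i-1,\lambda_j+1)$ with $\lambda_i>\lambda_j+1$, for each of which the inequality reduces to a two-variable weighted AM--GM estimate (Muirhead's classical argument). The implications $(1)\Rightarrow(3)$ and $(1)\Rightarrow(4)$ follow the same $T$-transform reduction, combined with the combinatorial manipulations of normalized $e_{\lambda'}$ and $p_\lambda$ differences carried out in Cuttler--Greene--Skandera. The main obstacle is $(1)\Rightarrow(5)$: unlike the earlier three cases, the Schur difference is \emph{not} in general monomial-positive, so a direct reduction to~$(2)$ fails. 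I would follow Sra's approach, reducing once more to a single $T$-transform, applying the Jacobi--Trudi formula $s_\lambda=\det(h_{\lambda_i-i+j})$, and estimating the resulting $n\times n$ determinant via sharp inequalities among normalized complete homogeneous polynomials evaluated at the two coordinates involved in the transform. This determinantal estimate is the delicate core of the argument.
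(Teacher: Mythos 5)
The paper does not prove this theorem itself; it presents it as a compilation of cited classical results (Muirhead, Cuttler--Greene--Skandera Theorems~3.2 and~4.2, and Sra). So there is no ``paper's own proof'' here to compare against directly, and the relevant question is whether your blind sketch is sound.

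Your plan is reasonable in outline, and the test-point technique you use for the reverse implications $(2),(3),(5)\Rightarrow(1)$ -- probing at $x_t = (t,\ldots,t,1,\ldots,1)$ and comparing leading $t$-degrees via $\sum_j\min(\lambda'_j,k)=\sum_{i\le k}\lambda_i$ and the unitriangular Kostka expansion -- is essentially the same argument the paper deploys later, in the proof of \cref{thm:logposneg}, to extract $\lambda \wm \mu$ (and $-\lambda \wm -\mu$) from positivity on the two log-orthants. The forward implications correctly locate the main tools from the literature ($T$-transforms for Muirhead, and Sra's determinantal analysis for the Schur case), though they are largely deferred to the cited papers rather than carried out.

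The genuine gap is the one you already flag: $(4)\Rightarrow(1)$. For power sums the test point degenerates, because $p_\lambda = \prod_i p_{\lambda_i}$ and each $p_{\lambda_i}(x_t)$ has degree $\lambda_i$ in $t$, so $p_\lambda(x_t)$ has degree $|\lambda|$ for \emph{every} choice of $k$; the log-negative probe suffers the same degeneracy. Thus this family of probes only recovers $|\lambda|=|\mu|$ and one inequality involving $\ell(\lambda)$ via the leading coefficient, not the partial-sum conditions. You correctly say ``a finer analysis is needed,'' but without supplying it the equivalence is incomplete; one must actually import the argument of \cite[Theorem~4.2]{CGS} here. (Incidentally, the paper's own proof of \cref{thm:logposneg} asserts that ``the same proof works'' for power sums because $p_\lambda(x(t))$ has degree $\sum_{i\le k}\lambda_i$; as just noted, that degree claim does not hold for the product $p_\lambda$, so the power-sum branch of that argument also deserves scrutiny. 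Your hesitation on this point is well founded.)
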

	
	Since Jack polynomials and Macdonald polynomials generalize monomial
	symmetric, elementary symmetric, and Schur polynomials, the first and
	third authors made some similar conjectures in~\cite{CS24}. In this
	paper, we extend these conjectures and prove them in some cases.
	
	As we will see below---as also in \cite{CGS,KT,MN,Sra}---the
	more challenging implication is to show that if $\lambda \m \mu$ then the
	normalized differences of various symmetric polynomials are non-negative
	on $[0,\infty)^n$. 
	An alternate, stronger approach would be to show that these differences ``decompose'' non-negatively into non-negative quantities, like monomials and differences of monomials as in Muirhead's inequality. This is known for $e_{\lambda'}$ and for $p_\lambda$, as we now state. 	
	
	Let $M_\lambda\coloneqq m_\lambda/m_\lambda(\bm1)$ be the normalized
	monomial. Recall \cite[Section 6]{CGS} that the \mydef{Muirhead cone}
	$\mathcal M_C(\mathcal C)$ and the \mydef{Muirhead semiring} $\mathcal M_S(\mathcal C)$ over a cone $\mathcal C$ are defined as follows:
	$\mathcal M_C(\mathcal C)$ consists of $\mathcal C$-linear combinations of
	Muirhead differences $\Set{M_\lambda-M_\mu}{\lambda\m\mu}$,
	and $\mathcal M_S(\mathcal C)$ consists of $\mathcal C$-linear combinations
	of products of functions in $\Set{M_\lambda-M_\mu}{\lambda\m\mu} \cup
	\Set{M_\lambda}{\lambda}$.
	In particular, when evaluated at $x \in [0,\infty)^n$, functions in $\mathcal M_C(\mathcal C)$ and $\mathcal M_S(\mathcal C)$ take values in the cone $\mathcal C$.
	
\begin{theorem}[{\cite[Theorem~6.1, Corollary~6.2]{CGS}}]\label{thm:majorization-muirhead}
		Given partitions $\lambda\m\mu$, the following differences
		\begin{align}
			\frac{e_{\lambda'}(x)}{e_{\lambda'}(\bm1)} - \frac{e_{\mu'}(x)}{e_{\mu'}(\bm1)},
			\quad \frac{p_\lambda(x)}{p_\lambda(\bm1)} - \frac{p_\mu(x)}{p_\mu(\bm1)}	
		\end{align} lie in the Muirhead semiring $\mathcal M_S$ over $\Q_{\geq0}$.
	\end{theorem}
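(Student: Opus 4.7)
The plan is to reduce, via telescoping, to a single elementary majorization step (T-transfer) and to verify the statement directly in the resulting two-part base case. Recall the classical fact that any $\lambda \m \mu$ with $\lambda \neq \mu$ admits a chain $\lambda = \lambda^{(0)} \succ \lambda^{(1)} \succ \cdots \succ \lambda^{(r)} = \mu$ in which each successive pair differs by a single move $(a,b) \mapsto (a-1, b+1)$ on two parts satisfying $a \geq b+2$, followed by reordering. Since $\mathcal{M}_S(\Q_{\geq 0})$ is closed under $\Q_{\geq 0}$-linear combinations, telescoping reduces the problem to a single such step. Moreover, both $p_\lambda = \prod_i p_{\lambda_i}$ and $e_\lambda = \prod_i e_{\lambda_i}$ factor over the parts, and the unchanged factors $\frac{p_c}{p_c(\bm 1)} = M_{(c)}$ and $\frac{e_c}{e_c(\bm 1)} = M_{(1^c)}$ are themselves generators of $\mathcal{M}_S$. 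Multiplicative closure of $\mathcal{M}_S$ lets us pull these out, reducing to a two-part base case.

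In the power-sum case the base identity is immediate: for $r > s > 0$ one has $p_r p_s = m_{(r+s)} + m_{(r,s)}$, so $p_a p_b - p_{a-1} p_{b+1} = m_{(a,b)} - m_{(a-1,b+1)}$; dividing by the common normalizer $n^2$ exhibits the base difference as a $\Q_{\geq 0}$-multiple of the elementary Muirhead difference $M_{(a,b)} - M_{(a-1,b+1)} \in \mathcal{M}_C$. In the elementary-symmetric case, after conjugating (which reverses majorization), the base case becomes
\[ \frac{e_{a-1} e_{b+1}}{\binom{n}{a-1}\binom{n}{b+1}} - \frac{e_a e_b}{\binom{n}{a}\binom{n}{b}} \in \mathcal{M}_S(\Q_{\geq 0}), \quad a \geq b+2. \]
I would expand each product via the standard formula
\[ e_r e_s = \sum_{k=0}^{\min(r,s)} \binom{r+s-2k}{r-k} m_{(2^k,1^{r+s-2k})}, \]
and observe that the partitions $\pi_k := (2^k, 1^{a+b-2k})$ form a single chain $\pi_0 \prec \pi_1 \prec \cdots$ under dominance. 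An Abel-summation rearrangement then recasts the normalized difference as $\sum_k c_k (M_{\pi_k} - M_{\pi_{k-1}})$, where $c_k$ is a tail sum of the (suitably weighted) monomial coefficients; evaluation at $\bm 1$ forces the total coefficient to vanish, so no boundary terms survive.

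The main obstacle I anticipate is verifying the positivity $c_k \geq 0$ for every $k$. These are explicit rational expressions in binomials of the form $\binom{a+b-2k}{a-k}$, $\binom{n}{a}$, $\binom{n-k}{a+b-2k}$, etc., and I would attack them either by direct algebraic manipulation (using Vandermonde's convolution together with the log-concavity of binomials) or by exploiting the probabilistic interpretation of $e_k(x)/\binom{n}{k}$ as the expectation of $\prod_{i \in S} x_i$ over a uniformly random $k$-subset $S \subset \{1,\dots,n\}$, which makes the tail sums amenable to coupling arguments between random subsets of sizes $(a-1,b+1)$ and $(a,b)$.
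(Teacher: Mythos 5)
The paper cites this result from \cite{CGS} without reproving it, so there is no in-paper argument to compare against; I evaluate your proposal on its own merits. Your outline --- telescope to a single adjacent $T$-transfer, factor out the unchanged parts via multiplicative closure of $\mathcal M_S$, then handle the resulting two-part base case --- is the natural one and is sound. Two cosmetic caveats in the power-sum base case: when $a-1=b+1$ the identity should read $p_{a-1}p_{b+1}=m_{(a+b)}+2\,m_{(b+1,b+1)}$, with the extra $2$ exactly cancelled by $m_{(b+1,b+1)}(\bm1)=\binom n2$ versus $m_{(a,b)}(\bm1)=n(n-1)$, so the conclusion is unchanged; and when $b=0$ the number of parts (hence the power of $n$ in the normalizer) changes between $\lambda$ and $\mu$, so the algebra needs a short re-derivation, which again works out.

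The one genuine gap is the step you flag yourself: you have not actually shown that the Abel tail sums in the elementary-symmetric base case are non-negative. The probabilistic route you sketch does close it cleanly. Writing $\pi_k=(2^k,1^{a+b-2k})$, the expansion of $e_re_s$ shows --- after multiplying by $m_{\pi_k}(\bm1)/\bigl(\binom nr\binom ns\bigr)$ --- that the coefficient of $M_{\pi_k}$ in $\frac{e_re_s}{\binom nr\binom ns}$ is exactly the hypergeometric probability $P(|S\cap T|=k)$ for independent uniform subsets $S,T$ of sizes $r,s$. Hence the $\ell$-th tail sum equals $P(K'\geq\ell)-P(K\geq\ell)$, where $K\sim\mathrm{Hyp}(n;a,b)$ and $K'\sim\mathrm{Hyp}(n;a-1,b+1)$, and what you need is that $K'$ stochastically dominates $K$. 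This follows from likelihood-ratio dominance: a direct computation gives
\[
\frac{P(K'=k)}{P(K=k)}
=\frac{(a-k)\,(n-a+1)\,(b+1)}{a\,(b+1-k)\,(n-b)},
\]
whose $k$-dependent factor $\frac{a-k}{b+1-k}$ is increasing in $k$ precisely because $a\geq b+2$. Likelihood-ratio ordering implies stochastic dominance, so every tail sum is $\geq 0$, and your Abel rearrangement lands in $\mathcal M_C(\Q_{\geq0})\subset\mathcal M_S(\Q_{\geq0})$. With this step made explicit, the proposal is a correct proof.
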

	It should be noted that the Muirhead semiring $\mathcal M_S$ is strictly larger than the Muirhead cone $\mathcal M_C$; see \cref{ex:Muirhead} below. 
	
	\subsubsection{Weak majorization-type and evaluation positivity}
	
	These inequalities (specifically, the final one in the next result) were
	first recorded in \cite[Theorem~1.14]{KT} for Schur polynomials, followed
	by the other variants that were shown in~\cite{CS24}.
	
	\begin{theorem}[{\cite[Theorem 6.7]{CS24}}]\label{thm:KT}
		The following are equivalent for partitions $\lambda, \mu$:
		\begin{enumerate}
			\item $\lambda$ weakly majorizes $\mu$, $\lambda\wm\mu$.
			\item The following difference is positive:
			\begin{align}
				\frac{m_\lambda(x+\bm1)}{m_\lambda(\bm1)} - \frac{m_\mu(x+\bm1)}{m_\mu(\bm1)}\geqslant0, \quad \forall x\in[0,\infty)^n.
			\end{align}
			\item The following difference is positive:
			\begin{align}
				\frac{e_{\lambda'}(x+\bm1)}{e_{\lambda'}(\bm1)} - \frac{e_{\mu'}(x+\bm1)}{e_{\mu'}(\bm1)}\geqslant0, \quad \forall x\in[0,\infty)^n.
			\end{align}
			\item The following difference is positive:
			\begin{align}
				\frac{p_\lambda(x+\bm1)}{p_\lambda(\bm1)} - \frac{p_\mu(x+\bm1)}{p_\mu(\bm1)}\geqslant0, \quad \forall x\in[0,\infty)^n.
			\end{align}
			\item (see also \cite{KT}) The following difference is positive:
			\begin{align}
				\frac{s_\lambda(x+\bm1)}{s_\lambda(\bm1)} - \frac{s_\mu(x+\bm1)}{s_\mu(\bm1)}\geqslant0, \quad \forall x\in[0,\infty)^n.
			\end{align}
		\end{enumerate}
	\end{theorem}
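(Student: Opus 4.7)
The plan is to prove the equivalences in two halves: $(1) \Rightarrow (2)$--$(5)$ by concatenating the two previously established theorems, and the converses by a specialization argument.

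For $(1) \Rightarrow (2)$--$(5)$, I would invoke the classical decomposition of weak majorization: if $\lambda \wm \mu$ in $\mathcal P_n$ then there exists an intermediate $\nu \in \mathcal P_n$ with $\lambda \supseteq \nu$ and $\nu \m \mu$ (constructed by trimming $|\lambda|-|\mu|$ boxes off the top rows of $\lambda$ while preserving the partition shape and the partial-sum dominance over $\mu$). Applying \Cref{thm:contain-J0} to $(\lambda, \nu)$ gives
\[
\frac{f_\lambda(x+\bm1)}{f_\lambda(\bm1)} \geq \frac{f_\nu(x+\bm1)}{f_\nu(\bm1)}
\]
for each $f \in \{m, e_{\bullet'}, p, s\}$, and applying \Cref{thm:majorization} to $(\nu, \mu)$ at the point $y = x+\bm1 \in [1,\infty)^n \subseteq [0,\infty)^n$ gives the analogous inequality between $f_\nu$ and $f_\mu$. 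Chaining yields (2)--(5).

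For the converse implications from (2), (3), (5), I would specialize $x = (t, \ldots, t, 0, \ldots, 0)$ with $k$ copies of $t$, so that $x+\bm1$ has $k$ entries $(t+1)$ and $n-k$ entries $1$, and let $t \to \infty$. A combinatorial argument then shows that $f_\lambda(x+\bm1)$ is a polynomial in $t$ of degree exactly $\sum_{i=1}^k \lambda_i$ with positive leading coefficient: for $m_\lambda$ one picks the orbit representative placing the $k$ large variables at the $k$ largest parts of $\lambda$; for $e_{\lambda'}$ one uses the identity $\sum_j \min(\lambda'_j, k) = \sum_{i=1}^k \lambda_i$; and for $s_\lambda$ one uses that the maximum number of entries $\leq k$ in a semistandard tableau of shape $\lambda$ equals $\sum_{i=1}^k \lambda_i$. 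Non-negativity of the normalized difference then forces $\sum_{i=1}^k \lambda_i \geq \sum_{i=1}^k \mu_i$ for $k = 1, \ldots, n$, which is precisely $\lambda \wm \mu$.

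The main obstacle will be the implication $(4) \Rightarrow (1)$: along the same ray, $p_\lambda(x+\bm1)$ has $t$-degree $|\lambda|$ regardless of $k$, so this specialization recovers only the top inequality $|\lambda| \geq |\mu|$. My plan here is to use instead the generic ray $x = ty$ with $y \in (0,\infty)^n$ and take $t \to \infty$; the leading-order comparison in the homogeneous case yields $p_\lambda(y)/p_\lambda(\bm1) \geq p_\mu(y)/p_\mu(\bm1)$ on $(0,\infty)^n$, to which \Cref{thm:majorization}(4) (the generalized Gantmacher inequality) applies and gives $\lambda \m \mu$, hence $\lambda \wm \mu$. The inhomogeneous subcase $|\lambda|>|\mu|$ needs a supplementary argument, such as a finer asymptotic expansion subtracting the leading $t^{|\lambda|}$ contribution to extract the next-order partial-sum constraints, since the power-sum basis, unlike the other three, does not encode the individual $\sum_{i=1}^k \lambda_i$ directly in the $t$-degree.
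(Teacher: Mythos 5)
Your forward implication $(1) \Rightarrow (2)$--$(5)$ coincides exactly with the paper's argument: invoke \cref{lem:cmwm} to produce an intermediate $\nu$ with $\lambda \supseteq \nu \m \mu$, then chain \cref{thm:contain-J0} (containment positivity) with \cref{thm:majorization} (majorization positivity at $y = x + \bm1 \in [1,\infty)^n$). The paper disposes of the whole theorem in one sentence, citing \cite[Theorem~6.5]{CS24}, and you are filling in the same recipe.

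For the converse implications the paper offers no argument of its own, so your ray specialization is a self-contained addition. The bookkeeping for $(2),(3),(5) \Rightarrow (1)$ along $x = (t,\dots,t,0,\dots,0)$ is correct: each of $m_\lambda(x+\bm1)$, $e_{\lambda'}(x+\bm1)$, $s_\lambda(x+\bm1)$ is a polynomial in $t$ of degree exactly $\sum_{i\leq k}\lambda_i$ with positive leading coefficient (via the dominant orbit term, the identity $\sum_j \min(\lambda'_j,k) = \sum_{i\leq k}\lambda_i$, and the bound on entries $\leq k$ in a column-strict tableau, respectively), so comparing leading coefficients of the normalized difference forces every partial-sum inequality. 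This is in the same spirit as the degree argument in the paper's proof of \cref{thm:logposneg}.

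The genuine gap is $(4) \Rightarrow (1)$, which you flag but do not close. Because $p_j$ is a single power, $p_\lambda(x+\bm1)$ has $t$-degree $|\lambda|$ along \emph{every} ray, regardless of $k$ or the direction vector; the power-sum basis is blind to the partial sums $\sum_{i\leq k}\lambda_i$ at leading order. Your homogeneous limit $x = ty_0$ does yield $|\lambda| \geq |\mu|$ (else the dominant term of the difference is negative for $t$ large), and in the equidegree case $|\lambda| = |\mu|$ it yields $p_\lambda(y_0)/p_\lambda(\bm1) \geq p_\mu(y_0)/p_\mu(\bm1)$ on $(0,\infty)^n$, whence $\lambda \m \mu$ by \cref{thm:majorization}(4). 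But when $|\lambda| > |\mu|$ the leading terms never compete and this method extracts no constraint whatsoever, while the subcase is non-vacuous: for instance $\lambda = (2,2,2)$ and $\mu = (3,1,1)$ satisfy $|\lambda| > |\mu|$ yet $\lambda$ does not weakly majorize $\mu$, so one must show that $(4)$ actually fails for such pairs. The ``finer asymptotic expansion'' you mention would need to extract the individual partial-sum constraints from the full Taylor expansion of $p_\lambda(ty_0+\bm1)$, and you do not carry this out. As written, the proposal establishes $(1) \Leftrightarrow (2) \Leftrightarrow (3) \Leftrightarrow (5)$ and $(1) \Rightarrow (4)$, but leaves $(4) \Rightarrow (1)$, and hence the full five-fold equivalence, open.
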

	As noted in \cite{CS24}, these inequalities follow easily from their containment- and majorization- analogues, via the following easy lemma
	that connects the three partial orders.
	
	\begin{lemma} [{\cite[Lemma 6.6]{CS24}}]\label{lem:cmwm}
		$\lambda\wm\mu$ if and only if there exists some $\nu$ such that $\lambda\supseteq\nu\m\mu$.
	\end{lemma}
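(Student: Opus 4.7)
The plan is to construct $\nu$ explicitly by truncating the Young diagram of $\lambda$ from its tail. First observe that $\lambda \wm \mu$ implies $|\lambda| \geq |\mu|$, so $d := |\lambda| - |\mu| \geq 0$; if $d = 0$ take $\nu = \lambda$, and if $|\mu| = 0$ take $\nu = 0$, so assume $0 < d < |\lambda|$. Then there is a unique index $j \in \{1,\dots,n\}$ characterized by
\[ \sum_{i > j} \lambda_i \;\leq\; d \;<\; \sum_{i \geq j} \lambda_i, \]
with empty sums interpreted as zero. Define
\[ \nu_i := \lambda_i \text{ for } i < j, \qquad \nu_j := \Big(\sum_{k \geq j} \lambda_k\Big) - d, \qquad \nu_i := 0 \text{ for } i > j. \]

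The next step is to check four things. (a) By the choice of $j$, $0 \leq \nu_j \leq \lambda_j$, and consequently $\nu_j \leq \lambda_j \leq \lambda_{j-1} = \nu_{j-1}$, so $\nu \in \mathcal{P}_n$. (b) Clearly $\nu_i \leq \lambda_i$ for every $i$, i.e.\ $\lambda \supseteq \nu$. (c) A direct calculation gives $|\nu| = \sum_{i<j}\lambda_i + \big(\sum_{i\geq j}\lambda_i - d\big) = |\lambda| - d = |\mu|$. (d) For the partial sums: if $k < j$, then $\sum_{i \leq k}\nu_i = \sum_{i \leq k}\lambda_i \geq \sum_{i \leq k}\mu_i$ by $\lambda \wm \mu$; and if $k \geq j$, then $\sum_{i \leq k}\nu_i = |\nu| = |\mu| \geq \sum_{i \leq k}\mu_i$. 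Combined with (c), this says exactly that $\nu \m \mu$.

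The argument is entirely elementary and no single step should present a real obstacle. The weak majorization hypothesis enters only twice: first to guarantee $|\lambda| \geq |\mu|$, which is what makes the pivot $j$ well-defined, and second to control the partial sums in the range $k < j$. An alternative presentation is to remove boxes one at a time from the last nonzero row of $\lambda$ until the total size drops to $|\mu|$; the invariant that $\sum_{i \leq k}\nu^{(t)}_i \geq \sum_{i \leq k}\mu_i$ is preserved because, for indices $k$ at or past the last nonzero row, both partial sums telescope to $|\nu^{(t)}|$ and $|\mu|$ respectively, with the former strictly larger until termination.
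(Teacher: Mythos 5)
Your construction is correct and complete: defining the pivot $j$ by $\sum_{i > j}\lambda_i \le d < \sum_{i \ge j}\lambda_i$ (where $d = |\lambda|-|\mu|$), truncating $\lambda$ at row $j$, and checking (a)--(d) gives a valid $\nu$ with $\lambda \supseteq \nu$ and $\nu \m \mu$. In particular $\nu_j > 0$ by the strict right-hand inequality, $\nu_j \le \lambda_j$ by the left-hand one, and the partial-sum check for $k \ge j$ collapses to $|\nu| = |\mu| \ge \sum_{i\le k}\mu_i$, so nothing is missing. The paper itself gives no proof here, only a pointer to \cite{CS24}, so there is no in-text argument to compare against; your ``remove one box at a time from the last nonzero row'' reformulation is the same idea phrased as an induction and is the form the lemma is usually stated in. One tiny stylistic point: you might add a word on why $j$ exists and is unique --- namely that $S_j := \sum_{i\ge j}\lambda_i$ is weakly decreasing with $S_1 = |\lambda| > d \ge 0 = S_{n+1}$, so there is exactly one crossing --- but the claim is evidently true and the rest of the argument stands.
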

	
	As mentioned above, \cite{KT} provides a different proof of
	\cref{thm:KT} for Schur polynomials.

	\subsection{Our work}\label{Sourwork}
	As Jack polynomials unify Schur, monomial, and elementary symmetric polynomials, it is a pleasing fact that \cref{thm:contain-J} holds, unifying several related but disparate positive-coefficient expansion-characterizations of containment in \cref{thm:contain-J0}. In the same vein, it is natural to ask if \cref{thm:majorization,thm:KT} can also be extended to Jack polynomials for all parameters, or more strongly, to Jack-positivity in the sense of \cref{thm:contain-J}(3).

	The goal of this work is to achieve the above unifications in several cases. We also go beyond Jack polynomials, to Macdonald polynomials with parameters $q,t \in (0,1)$. We refer the reader to:
	\begin{itemize}
		\item
		\cref{thm:CGSimpliesKT,thm:CGS-KT-2var,thm:Jack2para},
		connecting (weak) majorization and normalized Jack differences;
		\item \cref{thm:Muirhead} for the Muirhead semiring and Jack polynomial
		differences;
		\item \cref{thm:CGS-Muirhead-Mac} for majorization via
		Macdonald polynomials.
	\end{itemize}

	Our results also fit into---and contribute to---the framework of \textit{duality} discussed in \cite{CS24}: given a partial order $\m_1$ on symmetric polynomials and a basis $\{ b_\lambda(x) \}$ of them, as well as a partial order $\m_2$ on partitions, we say that $\m_1$ and $\m_2$ are \mydef{dual} for the family $b_\lambda$ if the following holds:
	\begin{equation}
	b_\lambda \m_1 b_\mu \ \text{if and only if} \ \lambda \m_2 \mu.
	\end{equation}

	In \cite{CS24}, two of us showed that the containment order $\m_2$ is dual to ``Jack expansion positivity'' (wherein $p(x) \m_1 q(x)$ means $p-q$ is a positive combination of Jack polynomials---for a specific choice of parameter or for an indeterminate parameter). In this spirit, \cref{thm:majorization} asserts that majorization, or the dominance order $\m$ on partitions (which we take to be $\m_2$) is dual to ``evaluation positivity'' on the positive orthant: $p \m_1 q$ if $p(x) \geq q(x)$ for all $x \in (0,\infty)^n$---this is for normalized Schur, or monomial, or other specific families of symmetric functions. 

	Similarly, \cref{thm:KT} shows the duality between the weak dominance order $\m_2$ (which is now $\wm$) and evaluation positivity for the shifted normalized bases $f_\lambda(x + {\bf 1})$ (with the same $f$ as previously) on the positive orthant. It is our goal in this work to extend both of these dualities to Jack polynomials with arbitrary parameter---and even more strongly, to Macdonald polynomials.

	\section{Preliminaries}\label{Sprelim}
	
	In this section, we recall the definitions of the symmetric functions
	that appear in this paper, mostly following \cite{Mac15,CS24}, with minor
	modifications.
	
	\subsection{Partitions}
	
	A \mydef{partition} is an infinite tuple $\lambda = (\lambda_1,\lambda_2,\dots)$ of weakly-decreasing non-negative integers with finitely many nonzero entries.
	The non-zero $\lambda_i$'s are called the \mydef{parts} of $\lambda$.
	The zeros are usually omitted, and we use the exponential to indicate repeated entries.
	The \mydef{length} of $\lambda$, denoted by $\ell(\lambda)$, is the number of parts; and the \mydef{size} is the sum of the parts, $|\lambda|=\lambda_1+\cdots+\lambda_{\ell(\lambda)}$.
	For example, $\lambda=(2^21^3)=(2,2,1,1,1,0,\dots)$ has length 5 and size 7.
	
	The \mydef{conjugate} of a partition $\lambda$ is denoted by $\lambda'$, given by 
	\begin{align}
		\lambda_j' = \#\Set{i}{\lambda_i\geq j},\quad j\geq1.
	\end{align}
	Note that $\lambda''=\lambda$, and $\lambda_1'=\ell(\lambda)$.

	Denote by $\mathcal P_n$ the set of partitions of length at most $n$ (note, this differs from Macdonald's notation), and $\mathcal P_n'$ the set of the conjugates of $\mathcal P_n$, namely, the partitions $\lambda$ with $\lambda_1\leq n$.
	
	\subsection{Symmetric polynomials}
	
	Let $\Lambda_n$ be the algebra of symmetric polynomials in $n$ variables $x=(x_1,\dots,x_n)$ over the field $\Q$.
	The \mydef{monomial symmetric polynomial} is defined as
	\begin{align}
		m_\lambda(x) \coloneqq \sum_\eta x^\eta,
	\end{align}
	where $x^\eta\coloneqq x_1^{\eta_1}\cdots x_n^{\eta_n}$ and the sum is over distinct permutations $\eta$ of $\lambda=(\lambda_1,\dots,\lambda_n)$.
	Note that $m_\lambda$ is a well-defined function for any real $n$-tuple $\lambda$.
	
	For any $k\geq1$, the $k$-th \mydef{elementary symmetric polynomial} and \mydef{power sum} are defined by
	\begin{align}
		e_k(x) &\coloneqq m_{(1^k)}(x) = \sum_{1\leq i_1<\dots<i_k\leq n} x_{i_1}\cdots x_{i_k},\\
		p_k(x) &\coloneqq m_{(k)}(x) = \sum_{1\leq i\leq n} x_{i}^k.
	\end{align}
	(If $k>n$, $e_k(x_1,\dots,x_n) := 0$.)
	Next, define $e_\lambda$ and $p_\lambda$  by multiplication:
	\begin{align}
		e_\lambda\coloneqq e_{\lambda_1}\cdots e_{\lambda_l}, \quad p_\lambda\coloneqq p_{\lambda_1}\cdots p_{\lambda_l},   \qquad l=\ell(\lambda).
	\end{align}
	
	The \mydef{Schur polynomial} is defined by
	\begin{align}
		s_\lambda(x) = \frac{\det(x_i^{\lambda_j+n-j})_{1\leq i,j\leq n}}{\det(x_i^{n-j})_{1\leq i,j\leq n}}.
	\end{align}
	Note that the denominator is the Vandermonde determinant 
	\begin{align}
		\det(x_i^{n-j})_{1\leq i,j\leq n}=\prod_{1\leq i<j\leq n} (x_i-x_j),
	\end{align}
	and that the numerator (and hence $s_\lambda$) is a well-defined function for any real $n$-tuple $\lambda$.
	
	Each of $\Set{m_\lambda}{\lambda\in\mathcal P_n}$, $\Set{e_{\lambda'}}{\lambda\in\mathcal P_n}$, $\Set{p_{\lambda'}}{\lambda\in\mathcal P_n}$ and $\Set{s_\lambda}{\lambda\in\mathcal P_n}$ is a $\Q$-basis of $\Lambda_n$ \cite[Chapter I]{Mac15}.
	
	\subsection{Jack polynomials and Macdonald polynomials}
	
	Let $\alpha=\frac{1}{\tau}$, $q$ and $t$ be indeterminates over $\Q$ and
	consider the fields of rational functions $\F=\Q(\alpha)=\Q(\tau)$ and
	$\F=\Q(q,t)$. Let $\Lambda_{n,\tau}\coloneqq\Lambda_n\otimes\Q(\tau)$ and
	$\Lambda_{n,q,t}\coloneqq\Lambda_n\otimes\Q(q,t)$ be the algebras of
	symmetric polynomials over the larger fields.
	
	Recall the \mydef{$\tau$-Hall inner product} over $\Lambda_{n,\tau}$ and the \mydef{$q,t$-Hall inner product} over $\Lambda_{n,q,t}$:
	\begin{align}
		\langle p_\lambda,p_\mu\rangle_\tau \coloneqq \delta_{\lambda} z_\lambda \tau^{-\ell(\lambda)}, \quad
		\langle p_\lambda,p_\mu\rangle_{q,t} \coloneqq \delta_{\lambda} z_\lambda \prod_{i=1}^{\ell(\lambda)}\frac{1-q^{\lambda_i}}{1-t^{\lambda_i}},
	\end{align}
	where $\lambda,\mu\in\mathcal P_n'$, and $z_\lambda$ is a constant given in \cite[p.~24]{Mac15}.
	Then the \mydef{Jack polynomial} $P_\lambda(\tau)$ and the \mydef{Macdonald polynomial} $P_\lambda(q,t)$ can be defined as the unique polynomials satisfying:
	\begin{align}
		\langle P_\lambda(*),P_\mu(*)\rangle_{*} = 0,\quad \lambda\neq\mu,	\\
		P_\lambda(*) = \sum_{\lambda\m\mu} K_{\lambda\mu}(*)m_\mu,	\quad K_{\lambda\lambda}=1;
	\end{align}
	here $*$ is $\tau$ for Jack polynomials and $(q,t)$ for Macdonald polynomials.
	Note that our Jack polynomial $P_\lambda(x;\tau)$ is equal to Macdonald's $P_\lambda^{(\alpha=1/\tau)}(x)$.
	
	We provide additional definitions later when computing the polynomials.

	\subsection{The cone of positivity}
	
	In the Jack case, we define the \mydef{cone of positivity} $\fp$ and its real extension $\fp^{\R}$ as
	\begin{align}\label{eqn:fp}
		\fp\coloneqq \Set*{\frac fg}{f,g\in \mathbb Z_{\geq0}[\tau],\ g\neq0},\quad
		\fp^\R\coloneqq \Set*{\frac fg}{f,g\in\R_{\geq0}[\tau],\ g\neq0}.
	\end{align}
	
	In the Macdonald case, we define the \mydef{cone of positivity} $\kp$ and its real extension $\kp^\R$ as
	\begin{align}\label{eqn:fp-Mac}
		\kp\coloneqq\Set{f\in\Q(q,t)}{f(q,t)\geq0,\text{\ if\ } q,t\in(0,1)},\quad \kp^\R\coloneqq\Set{f\in\R(q,t)}{f(q,t)\geq0,\text{\ if\ } q,t\in(0,1)}.
	\end{align}

	\section{Jack polynomial conjectures and their partial resolutions}
	
	Having explained the previous literature, we now
	come to the main thrust of the present work. 
	Notice that
	\cref{thm:contain-J0} above is a ``manifestation'' of
	\cref{thm:contain-J} for special real values of the parameter $\tau$.
	Given similar manifestations of (weak) majorization in
	\cref{thm:majorization,thm:KT}, it is natural to expect the analogues of
	\cref{thm:contain-J} to also hold for (weak) majorization. 
	Thus, we begin with the following upgradation of
	\cite[Conjecture~1(1)]{CS24} to majorization. For reasons of
	exposition, we write the assertion that $\lambda\m\mu$ at the
	end:
	
	\begin{conj}[CGS Conjecture for Jack polynomials]\phantom{}\label{conj:CGS-J}
		The following are equivalent for partitions $\lambda$ and $\mu$ :
		\begin{enumerate}
			\item We have
			\begin{align}\label{eqn:CGS-J}
				\frac{P_\lambda(x;\tau)}{P_\lambda(\bm1;\tau)} -
				\frac{P_\mu(x;\tau)}{P_\mu(\bm1;\tau)}\in\fp^{\mathbb R}, \quad
				\forall x\in[0,\infty)^n.
			\end{align}
				
			\item For every $\tau_0\in[0,\infty]$, we have
			\begin{align}\label{eqn:CGS-J0}
				\frac{P_\lambda(x;\tau_0)}{P_\lambda(\bm1;\tau_0)} -
				\frac{P_\mu(x;\tau_0)}{P_\mu(\bm1;\tau_0)}\geqslant 0, \quad
				\forall x\in[0,\infty)^n.
			\end{align}
			
			\item For some $\tau_0\in[0,\infty]$, we have
			\begin{align}\label{eqn:CGS-J0log}
				\frac{P_\lambda(x;\tau_0)}{P_\lambda(\bm1;\tau_0)} -
				\frac{P_\mu(x;\tau_0)}{P_\mu(\bm1;\tau_0)}\geqslant 0, \quad
				\forall x\in(0,1)^n \cup (1,\infty)^n.
			\end{align}
			
			\item $\lambda$ majorizes $\mu$.
		\end{enumerate}
	\end{conj}
	
	\begin{remark}\label{remark:weakerhypotheses}
	We note two ways in which our conjecture (and
	results below) work with weaker hypotheses than
	\cite{CGS} for instance.
	Firstly, \cite{CGS}
	imposed the extra hypothesis $|\lambda| = |\mu|$---see e.g.\
	\cref{thm:majorization}. We do not do so, instead working with
	arbitrary partitions $\lambda, \mu$.
	 Second, the assertions in \cref{thm:majorization} all
	claim that an inequality of normalized symmetric differences
	implies majorization, when evaluated at all test vectors $x$ in
	$[0,\infty)^n$. In contrast, \cref{conj:CGS-J} (and results
	below) work only with $x \in (0,1)^n \cup (1,\infty)^n$. In
	\cref{remark:asymptote} below, we will further shrink the test
	set, to a countable set of test vectors $x$. This strengthens
	even the previously known characterizations of majorization, by
	Muirhead, Cuttler--Greene--Skandera, and others.
	\end{remark}
	
	\noindent In exact parallel, we next upgrade \cite[Conjecture~1(2)]{CS24}
	to weak majorization.
	
	\begin{conj}[KT Conjecture for Jack polynomials]\phantom{}\label{conj:CGS-Jwm}
		The following are equivalent for partitions $\lambda$ and $\mu$:
		\begin{enumerate}
			\item We have
			\begin{align}\label{eqn:KT-J}
				\frac{P_\lambda(x+\bm1;\tau)}{P_\lambda(\bm1;\tau)} -
				\frac{P_\mu(x+\bm1;\tau)}{P_\mu(\bm1;\tau)}\in\fp^{\mathbb R},
				\quad \forall x\in[0,\infty)^n.
			\end{align}
			
			\item For every $\tau_0\in[0,\infty]$, we have
			\begin{align}\label{eqn:KT-J0}
				\frac{P_\lambda(x+\bm1;\tau_0)}{P_\lambda(\bm1;\tau_0)} -
				\frac{P_\mu(x+\bm1;\tau_0)}{P_\mu(\bm1;\tau_0)}\geqslant 0, \quad
				\forall x\in[0,\infty)^n.
			\end{align}
			
			\item For some $\tau_0\in[0,\infty]$, we have
			\begin{align}\label{eqn:KT-J1}
				\frac{P_\lambda(x+\bm1;\tau_0)}{P_\lambda(\bm1;\tau_0)} -
				\frac{P_\mu(x+\bm1;\tau_0)}{P_\mu(\bm1;\tau_0)}\geqslant 0, \quad
				\forall x\in[0,\infty)^n.
			\end{align}
			
			\item $\lambda$ weakly majorizes $\mu$.
		\end{enumerate}
	\end{conj}

	In the remaining parts of this section, we prove the following partial results:

	\begin{theorem}\label{thm:CGS-KT-2var}
		\cref{conj:CGS-J,conj:CGS-Jwm} hold for Jack polynomials in $n=2$ variables, as well as for any $n$ and $\mu = {\bf 1}_{|\lambda|}$.
		Moreover, the assertions (2)--(4) are equivalent in either conjecture whenever $\lambda$ and $\mu$ have at most two parts (with any number of variables).
	\end{theorem}
	\begin{remark}
		An analogue of the implication $(4)\Rightarrow(2)$ in \cref{conj:CGS-J} was formulated by McSwiggen and Novak for Heckman--Opdam polynomials for arbitrary root systems, and proved in semisimple rank 1, subsuming the case of Jack polynomials in two variables. 
		See \cite[Conjecture~4.7, Proposition~4.10]{MN}.		
		Although we only work in type~$A$, our results for two variables, as well as our \cref{conj:CGS-J,conj:CGS-Jwm} and later \cref{conj:Muirhead,conj:CGS-Muirhead-Mac,conj:jack-2para}, go considerably beyond~\cite{MN}.
	\end{remark}

	\subsection{The reduction to $(4) \Rightarrow (1)$}

	We first show ``most of'' the implications in \cref{conj:CGS-J,conj:CGS-Jwm}, reducing them to only one unproved implication.
	\begin{theorem}\label{thm:CGSimpliesKT}
		In both \cref{conj:CGS-J,conj:CGS-Jwm}, each part implies the next.
		Moreover, the remaining cyclic implication $(4)\Rightarrow(1)$ in
		\cref{conj:CGS-Jwm} follows from $(4)\Rightarrow(1)$ in \cref{conj:CGS-J}.
	\end{theorem}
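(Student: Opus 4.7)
The plan is to verify the stated implications in order; the arguments are elementary modulo \cref{thm:contain-J} and \cref{lem:cmwm}, with the two key ingredients being specialization of the parameter $\tau$ and a Muirhead-style monomial degeneration. For $(1)\Longrightarrow(2)$ in both conjectures, note that any $f(\tau)/g(\tau)\in\fp^{\mathbb{R}}$ evaluated at finite $\tau_0\in[0,\infty)$ is a ratio of two non-negative reals with positive denominator, hence $\geq 0$; and as $\tau_0\to\infty$ the monic Jack polynomial $P_\lambda(x;\tau)$ tends to $e_{\lambda'}(x)$, so the normalized difference has a well-defined non-negative limit. The implication $(2)\Longrightarrow(3)$ in \cref{conj:CGS-J} is then the trivial inclusion $(0,1)^n\cup(1,\infty)^n\subseteq[0,\infty)^n$.

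For $(3)\Longrightarrow(4)$ in \cref{conj:CGS-J} and $(2)\Longrightarrow(3)$ in \cref{conj:CGS-Jwm} I would run a single Muirhead-style degeneration. Fix $k\in\{1,\dots,n\}$ and specialize $x_1=\cdots=x_k=t$ and $x_{k+1}=\cdots=x_n=s$, choosing $s>1$ in the CGS-J case (so $x\in(1,\infty)^n$) and $s=0$ in the CGS-Jwm case (so $x+\bm1$ has its final $n-k$ entries equal to $1$). Because $P_\lambda(y;\tau_0)=m_\lambda(y)+\sum_{\lambda\m\sigma,\,\sigma\neq\lambda}c_{\lambda\sigma}(\tau_0)\,m_\sigma(y)$ with $c_{\lambda\sigma}(\tau_0)\geq0$ (Knop--Sahi), and every such $\sigma$ satisfies $\sigma_1+\cdots+\sigma_k\leq\lambda_1+\cdots+\lambda_k$, the leading behaviour in $t$ is
\[
\frac{P_\lambda(y;\tau_0)}{P_\lambda(\bm1;\tau_0)} = C_\lambda(s;\tau_0)\,t^{\lambda_1+\cdots+\lambda_k} + o\!\left(t^{\lambda_1+\cdots+\lambda_k}\right),\qquad C_\lambda(s;\tau_0)>0,
\]
with $y=x$ or $y=x+\bm1$ as appropriate. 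The assumed non-negativity of the difference then forces $\lambda_1+\cdots+\lambda_k\geq\mu_1+\cdots+\mu_k$; running $k=1,\dots,n$ yields $\lambda\wm\mu$, and combining with $|\lambda|=|\mu|$ in the first case gives $\lambda\m\mu$.

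For the remaining cyclic implication $(3)\Longrightarrow(1)$ in \cref{conj:CGS-Jwm} assuming $(4)\Longrightarrow(1)$ of \cref{conj:CGS-J}, I use \cref{lem:cmwm} to pick $\nu$ with $\lambda\supseteq\nu\m\mu$ and telescope
\[
\frac{P_\lambda(x+\bm1;\tau)}{P_\lambda(\bm1;\tau)}-\frac{P_\mu(x+\bm1;\tau)}{P_\mu(\bm1;\tau)} = \left(\frac{P_\lambda(x+\bm1;\tau)}{P_\lambda(\bm1;\tau)}-\frac{P_\nu(x+\bm1;\tau)}{P_\nu(\bm1;\tau)}\right) + \left(\frac{P_\nu(x+\bm1;\tau)}{P_\nu(\bm1;\tau)}-\frac{P_\mu(x+\bm1;\tau)}{P_\mu(\bm1;\tau)}\right).
\]
The first bracket lies in $\fp^{\mathbb{R}}$ by \cref{thm:contain-J} applied to $\lambda\supseteq\nu$; the second, by the hypothesized $(4)\Longrightarrow(1)$ of \cref{conj:CGS-J} applied to $\nu\m\mu$ at $y=x+\bm1\in[1,\infty)^n\subseteq[0,\infty)^n$, also lies in $\fp^{\mathbb{R}}$. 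Their sum is therefore in $\fp^{\mathbb{R}}$, as desired. The only genuine obstruction in this program is of course $(4)\Longrightarrow(1)$ of \cref{conj:CGS-J}, which is not addressed by this theorem and is the subject of the partial resolutions in subsequent sections.
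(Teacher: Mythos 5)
Your proof is correct and follows essentially the same route as the paper's. The (1)$\Rightarrow$(2)$\Rightarrow$(3) steps are handled identically (specialization of $\tau$, trivial set inclusion), and the final telescoping via \cref{lem:cmwm} and \cref{thm:contain-J} for the cyclic implication matches the paper word for word. The only place you diverge is the degeneration step (3)$\Rightarrow$(4) in \cref{conj:CGS-J} and (2)$\Rightarrow$(3) in \cref{conj:CGS-Jwm}: the paper cites the argument of \cref{thm:logposneg} (where $x(t)=(tu_1,\dots,tu_k,u_{k+1},\dots,u_n)$ with $t\to\infty$ is used for $M_\lambda$, together with \cite[Eq.~(3.2)]{KT} for the Jack/Schur case), whereas you run the same asymptotic analysis directly on the normalized Jack polynomial using the Knop--Sahi monomial positivity and dominance triangularity. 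This is slightly more self-contained — it makes the paper's parenthetical ``the same argument as in the proof of \cref{thm:logposneg} works here'' fully explicit, and treats $\tau_0\in\{0,\infty\}$ uniformly rather than deferring to \cite{CS24}. One small point worth noting: for (3)$\Rightarrow$(4) you only exploit the $(1,\infty)^n$ orthant and then invoke $|\lambda|=|\mu|$, whereas the paper's \cref{thm:logposneg} uses both the log-positive and log-negative orthants (so that it would also apply without the size hypothesis); since CGS-J does assume $|\lambda|=|\mu|$, your shortcut is valid here, but it is a slightly weaker statement than what \cref{thm:logposneg} proves.
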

		\begin{proof}
		$\bullet$
		In both \cref{conj:CGS-J,conj:CGS-Jwm}, the implications $(1)\Rightarrow(2)\Rightarrow(3)$ are evident.
		
		$\bullet$
		We now prove $(3)\Rightarrow(4)$ in both conjectures.
		
		First, assume that $\lambda$ does not weakly majorize $\mu$, namely, $d_k(\lambda) \coloneqq \lambda_1+\cdots+\lambda_k<\mu_1+\cdots+\mu_k= d_k(\mu)$ for some $k$.
		Let 
		\begin{align*}
			x(t) = (\underbrace{2t,\dots,2t}_{\text{$k$ times}},\underbrace{2,\dots,2}_{\text{$n-k$ times}})\in(1,\infty)^n,	\quad t>1.
		\end{align*}		
		Then as polynomials in $t$, $\frac{P_\lambda(x(t);\tau_0)}{P_\lambda(\bm1;\tau_0)}$ and $\frac{P_\mu(x(t);\tau_0)}{P_\mu(\bm1;\tau_0)}$ have degree $d_k(\lambda)$ and $d_k(\mu)$, respectively, since the leading term (in the majorization order) of $P_\lambda$ is $m_\lambda$.
		Now, let $t\to\infty$, we see that $$\frac{P_\lambda(x(t);\tau_0)}{P_\lambda(\bm1;\tau_0)}-\frac{P_\mu(x(t);\tau_0)}{P_\mu(\bm1;\tau_0)}\to-\infty,$$
		contradicting \cref{eqn:CGS-J0log,eqn:KT-J1} (in \cref{eqn:KT-J1}, we view the argument $x(t)$ as $(x(t)-\bm1)+\bm1$, with $x(t)-\bm1\in[0,\infty)^n$).
		
		Second, assume that $-\lambda \coloneqq (-\lambda_n,\dots,-\lambda_1)$ does not weakly majorize $-\mu=(-\mu_n,\dots,-\mu_1)$, namely, $s_k(\lambda)\coloneqq \lambda_n+\dots+\lambda_{n-k+1}>\mu_n+\dots+\mu_{n-k+1}=s_k(\mu)$.
		Let 
		\[
		y(t)=\Big(\underbrace{\tfrac1{2t},\dots,\tfrac1{2t}}_{k\text{ times}},
		\underbrace{\tfrac12,\dots,\tfrac12}_{n-k\text{ times}}\Big)\in(0,1)^n,
		\qquad t>1.
		\]
		
		Let $\nu$ be a partition.
		Then
		\[
		m_\nu(y(t))
		=\Big(\tfrac12\Big)^{|\nu|}\sum_{\beta\in\mathfrak S_n\cdot \nu} t^{-(\beta_1+\cdots+\beta_k)}.
		\]
		The minimum of $\beta_1+\cdots+\beta_k$ over all permutations $\beta$ of $\nu$ is attained by placing
		the $k$ smallest parts of $\nu$ into the first $k$ coordinates, so this minimum equals $s_k(\nu)=\nu_{n-k+1}+\cdots+\nu_n$.
		Therefore there exists a constant $A_{\nu,k}>0$ such that
		\begin{equation}\label{eq:mon-asymp}
			m_\nu(y(t))
			=A_{\nu,k}\,t^{-s_k(\nu)}+O(t^{-s_k(\nu)-1}),
			\qquad t\to\infty.
		\end{equation}
		
		Note that Jack polynomials expand positively into monomials:
		\[
		P_\lambda(x;\tau_0)= m_\lambda(x) + \sum_{\nu\prec\lambda} c_{\lambda\nu} m_\nu(x),
		\qquad c_{\lambda\nu}\geq0.
		\]
		For $\nu\prec\lambda$, we have 
		\[
		s_k(\nu) = \nu_n+\dots+\nu_{n-k+1}\geq \lambda_n+\dots+\lambda_{n-k+1}=s_k(\lambda).
		\]
		Hence there exists a constant $A_{\lambda,k}'>0$ such that 
		\begin{equation}\label{eq:Pl-asymp}
			P_\lambda(y(t);\tau_0)= A_{\lambda,k}' t^{-s_k(\lambda)} +O\big(t^{-s_k(\lambda-1}\big),
			\qquad t\to\infty,
		\end{equation}
		and similarly for $P_\lambda(y(t);\tau_0)$.
		Then we have
		\[
		\frac{P_\lambda(y(t);\tau_0)/P_\lambda(\bm1;\tau_0)}{P_\mu(y(t);\tau_0)/P_\mu(\bm1;\tau_0)}
		\sim \frac{A_{\lambda,k}'/P_\lambda(\bm1;\tau_0)}{A_{\mu,k}'/P_\mu(\bm1;\tau_0)}\,t^{-(s_k(\lambda)-s_k(\mu))} \longrightarrow 0,
		\qquad t\to\infty.
		\]
		Thus for all sufficiently large $t$,
		\[
			\frac{P_\lambda(y(t);\tau_0)}{P_\lambda(\bm1;\tau_0)} -
			\frac{P_\mu(y(t);\tau_0)}{P_\mu(\bm1;\tau_0)}<0,
		\]
		contradicting \cref{eqn:CGS-J0log}, hence $-\lambda$ weakly dominates $-\mu$.
		
		To summarize, we see that \cref{eqn:KT-J1} implies $\lambda\wm\mu$, and that \cref{eqn:CGS-J0log} implies $\lambda\wm\mu$ and $-\lambda\wm-\mu$, forcing $|\lambda|=|\mu|$ and hence $\lambda\m\mu$.
		These are exactly the implications $(3)\Rightarrow(4)$ in the conjectures.
		
		$\bullet$
		Finally, we prove that \cref{conj:CGS-Jwm} follows from \cref{conj:CGS-J}.
		
		Assume \cref{conj:CGS-Jwm}(4), namely, $\lambda$ weakly majorizes $\mu$. 
		Then by \cref{lem:cmwm}, there exists $\nu$ such
		that $\lambda\supseteq\nu\m\mu$. Now write (suppressing the $\tau$)
		\begin{align*}
			\frac{P_\lambda(x+\bm1)}{P_\lambda(\bm1)} - \frac{P_\mu(x+\bm1)}{P_\mu(\bm1)}
			=	
			\left(\frac{P_\lambda(x+\bm1)}{P_\lambda(\bm1)} -\frac{P_\nu(x+\bm1)}{P_\nu(\bm1)}\right) +\left(\frac{P_\nu(x+\bm1)}{P_\nu(\bm1)} - \frac{P_\mu(x+\bm1)}{P_\mu(\bm1)}\right).
		\end{align*}
		The first difference is Jack-positive by \cref{thm:contain-J},
		and in particular, in $\fp^{\mathbb R}$ when evaluated at $x\in[0,\infty)^n$. 
		The second difference is in $\fp^{\mathbb R}$ by \cref{eqn:CGS-J} (for the pair $\nu\m\mu$).
	\end{proof}
	
	\begin{remark}
		As a consequence of \cref{thm:CGSimpliesKT}, to completely settle \cref{conj:CGS-J,conj:CGS-Jwm}, it suffices to show that
		$\lambda\m\mu$ $\Rightarrow$ \cref{eqn:CGS-J}. Note that for
		$\tau_0=0,1,\infty$, this is already known by \cref{thm:majorization}. 
	\end{remark}

	\begin{remark}\label{remark:asymptote}
		As mentioned in \cref{remark:weakerhypotheses}, we now reduce the test set $(0,1)^n \cup (1,\infty)^n$ to a potentially ``minimum'', countable test set. Let $S\subset \Set{x\in\R^n}{x_1>\dots>x_n>0}$ be any subset.
		We say that $S$ has asymptotes in direction $i$ ($1\leq i\leq n-1$), if there exists a constant $M>0$ and a sequence $(x^{(1)},x^{(2)},\dots)$ in $S$, such that 
		\begin{align*}
		\begin{dcases}
			\lim_{m\to\infty}\frac{x_i^{(m)}}{x_{i+1}^{(m)}}\to \infty,\\
			\lim_{m\to\infty}\frac{x_j^{(m)}}{x_{j+1}^{(m)}}<M,\quad	j=1,\dots,i-1,i+1,\dots,n-1,\\
			\lim_{m\to\infty} x_n^{(m)}<M.
		\end{dcases}
		\end{align*}
		We say that $S$ has asymptotes in direction $n$, if there exists a constant $M>0$ and a sequence $(x^{(1)},x^{(2)},\dots)$ in $S$, such that 
		\begin{align*}
			\begin{dcases}
				\lim_{m\to\infty}\frac{x_j^{(m)}}{x_{j+1}^{(m)}}<M,\quad	j=1,\dots,n-1,\\
				\lim_{m\to\infty} x_n^{(m)}=0,\text{ or }\infty.
			\end{dcases}
		\end{align*}
		
		To prove \cref{eqn:CGS-J0} for some fixed $\tau_0$, one may assume without lost of generality that $x_1>\dots>x_n>0$ by the symmetry of Jack polynomials. We now have:
		
		\begin{proposition}
		Let $S\subset \Set{x\in\R^n}{x_1>\dots>x_n>0}$ be a subset that has asymptotes in all directions $1,\dots,n$. If
		\begin{align}\label{eqn:CGS-J-S}
			\frac{P_\lambda(x;\tau_0)}{P_\lambda(\bm1;\tau_0)} -
			\frac{P_\mu(x;\tau_0)}{P_\mu(\bm1;\tau_0)}\geqslant 0, \quad
			\forall x\in S,
		\end{align}
		then $\lambda\m\mu$.
		\end{proposition}
		
		\begin{proof}
		For direction $k=1,\dots,n-1$ and for $k=n$ with $\lim_{m\to\infty}x_n^{(m)}=\infty$, we see that $x_1^{(m)},\dots,x_k^{(m)}$ approach infinity at the same rate and $x_{k+1}^{(m)},\dots,x_n^{(m)}$ are bounded.
		Then the ``order'' of $P_\lambda(x^{(m)};\tau_0)$ is $\lambda_1+\cdots+\lambda_k$, and similarly for $P_\mu(x^{(m)};\tau_0)$.
		Since \cref{eqn:CGS-J-S} holds for all $x\in S$, letting $m\to\infty$, we see that $\lambda_1+\cdots+\lambda_k\geq\mu_1+\cdots+\mu_k$.
		As this holds for all $k$, we have $\lambda\wm\mu$.
		
		For direction $k=n$ with $\lim_{m\to\infty}x_n^{(m)}=0$, we see that $x_1^{(m)},\dots,x_n^{(m)}$ approach 0 at the same rate, and the ``order'' of $P_\lambda(x;\tau_0)$ is $-|\lambda|$.
		Since \cref{eqn:CGS-J-S} holds for all $x\in S$, we see that $|\lambda|\leq |\mu|$.
		Hence we conclude that $|\lambda|=|\mu|$ and $\lambda\m\mu$.
	\end{proof}	
	\end{remark}

	\subsection{Reductions of the conjectures}
	In this subsection, we further reduce \cref{conj:CGS-J} to some important special cases.
	
	\subsubsection{Adding columns}
	\begin{proposition}\label{prop:one_row}
		For a one-row partition $(d)=(d,0^{n-1})$, the Jack polynomials are given by
		\begin{align}
			P_{(d,0^{n-1})}(x;\tau) = \frac{d!}{\poch{\tau}{d}} \cdot \sum_{|\eta|=d} \frac{\poch{\tau}{\eta}}{\eta!}x^\eta,
		\end{align}
		where the sum runs over all $n$-tuples $\eta=(\eta_1,\dots,\eta_n)\in\Z_{\geq0}^n$ such that $\eta_1+\dots+\eta_n=d$, and 
		\begin{align}
			\poch{\tau}{\eta}	\coloneqq	\prod_i \poch{\tau}{\eta_i},\quad	\poch{x}{n}\coloneqq x(x+1)\cdots(x+n-1) = \frac{\Gamma(x+n)}{\Gamma(x)}
		\end{align}
		is the rising factorial.
		When evaluated at $\bm1$, we have
		$\displaystyle P_{(d,0^{n-1})}(\bm1; \tau) = \frac{\poch{n\tau}{d}}{\poch{\tau}{d}}$.
	\end{proposition}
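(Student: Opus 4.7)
The plan is to deduce both formulas at once from a single generating-function identity,
\[
\prod_{i=1}^n (1-x_i u)^{-\tau} \;=\; \sum_{d \geq 0} \frac{\poch{\tau}{d}}{d!}\, P_{(d,0^{n-1})}(x;\tau)\, u^d,
\]
which is the one-secondary-variable specialization of the Cauchy kernel identity for Jack polynomials, obtained by setting $y=(u,0,\ldots,0)$. The right-hand sum collapses to one-row partitions because $P_\lambda = \sum_{\lambda \m \mu} K_{\lambda\mu}(\tau)\, m_\mu$ and $m_\mu(u,0,\ldots,0)$ vanishes unless $\ell(\mu) \leq 1$; moreover, the only partition $\mu$ with $\ell(\mu) \leq 1$ that can appear in such an expansion is $\mu=(|\lambda|,0,\ldots)$, whose presence forces $\lambda$ itself to be a one-row partition. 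The overall constant $\poch{\tau}{d}/d!$ in the identity is pinned down by comparing the coefficient of $m_{(d,0^{n-1})}$ on both sides.

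Given this generating identity, the first formula follows by expanding the left-hand side directly via the binomial series,
\[
\prod_{i=1}^n (1-x_i u)^{-\tau} \;=\; \prod_{i=1}^n \sum_{k \geq 0} \frac{\poch{\tau}{k}}{k!}\, x_i^k\, u^k \;=\; \sum_{\eta \in \Z_{\geq 0}^n} \frac{\poch{\tau}{\eta}}{\eta!}\, x^\eta\, u^{|\eta|},
\]
and extracting the coefficient of $u^d$. For the evaluation at $\bm1$, substituting $x=\bm1$ collapses the left-hand side to $(1-u)^{-n\tau} = \sum_{d \geq 0} \frac{\poch{n\tau}{d}}{d!}\, u^d$; matching $u^d$-coefficients on the right then yields $P_{(d,0^{n-1})}(\bm1;\tau) = \poch{n\tau}{d}/\poch{\tau}{d}$.

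The main obstacle is establishing the generating identity itself; invoking Macdonald's Cauchy kernel (see \cite[Ch.~VI]{Mac15}) makes this immediate, but a self-contained derivation requires checking that the candidate
\[
F_d(x;\tau) \;:=\; \frac{d!}{\poch{\tau}{d}} \sum_{|\eta|=d} \frac{\poch{\tau}{\eta}}{\eta!}\, x^\eta
\]
satisfies the defining properties of $P_{(d,0^{n-1})}$. Symmetry, homogeneity of degree $d$, and the triangular expansion $F_d = m_{(d,0^{n-1})} + \sum_{(d,0^{n-1}) \m \mu,\, \mu \neq (d,0^{n-1})} c_\mu(\tau)\, m_\mu$ are all immediate from the $S_n$-orbit decomposition of $\{\eta \in \Z_{\geq 0}^n : |\eta|=d\}$; what remains is to verify that $F_d$ is an eigenfunction of the Jack Laplace--Beltrami operator with the same eigenvalue as $P_{(d,0^{n-1})}$, a direct but slightly tedious calculation that is the only non-trivial step.
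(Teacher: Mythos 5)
Your proof is correct, but the route is genuinely different from the paper's. The paper's proof is a two-line citation: the explicit monomial expansion is read off from the combinatorial (tableau) formula for Jack polynomials, and the principal specialization $P_{(d,0^{n-1})}(\bm1;\tau)=\poch{n\tau}{d}/\poch{\tau}{d}$ is taken from Macdonald's hook-length evaluation formula \cite[VI.~(10.20)]{Mac15} (or, equivalently, obtained by applying the Chu--Vandermonde identity to the monomial expansion). You instead derive everything from the single generating identity $\prod_{i}(1-x_iu)^{-\tau}=\sum_{d}\frac{\poch{\tau}{d}}{d!}P_{(d,0^{n-1})}(x;\tau)u^d$, obtained by specializing the Cauchy kernel for Jack polynomials at $y=(u,0,\dots,0)$, pinning down the constant by comparing $m_{(d,0^{n-1})}$-coefficients, and then getting both claims by expanding the left side (binomial series for the monomial formula; collapse to $(1-u)^{-n\tau}$ at $x=\bm1$ for the evaluation). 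Your approach has the virtue of unifying the two claims under one identity and, in particular, bypassing the Chu--Vandermonde step entirely for the evaluation. The trade-off is that it rests on the Cauchy kernel for Jack polynomials, a heavier input than the combinatorial formula the paper invokes; your proposed ``self-contained'' alternative (verifying $F_d$ is a Laplace--Beltrami eigenfunction) is essentially re-proving the relevant part of the Cauchy/orthogonality machinery and is left unsketched, but that gap does not affect the version that cites Macdonald.

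One small expository nit: the phrase ``the only partition $\mu$ with $\ell(\mu)\leq 1$ that can appear \dots whose presence forces $\lambda$ itself to be a one-row partition'' is correct but compresses two observations; spelled out, $m_\mu(u,0,\dots,0)\neq 0$ forces $\mu=(|\lambda|,0,\dots)$, and then the dominance constraint $\lambda\m\mu$ together with $|\lambda|=|\mu|$ forces $\lambda_1\geq|\lambda|$, hence $\lambda=(|\lambda|,0,\dots)$.
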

	\begin{proof}
		The first assertion follows from the combinatorial formula (see
		e.g.~\cite[Eq.~(2.10)]{CS24}). The second follows from
		\cite[VI.~(10.20)]{Mac15} (note, $\alpha$ is $\frac1\tau$ here),
		or the Chu--Vandermonde identity.
	\end{proof}
	
	\begin{proposition}\label{prop:add_col}
		Let $\lambda\in\mathcal P_n$ be such that $\lambda_n>0$. Write $\lambda-\bm1=(\lambda_1-1,\dots,\lambda_n-1)$.
		Then 
		\begin{align}
			P_\lambda(x; \tau) = x_1\cdots x_n \cdot P_{\lambda-\bm1}(x; \tau).
		\end{align}
	\end{proposition}
	\begin{proof}
		See \cite[VI.~(4.17)]{Mac15} for the Macdonald case.
	\end{proof}
	
	\begin{lemma}[Adding columns]\label{lem:add_col}
		To prove \cref{conj:CGS-J} (i.e., that $\lambda\m\mu$ $\Rightarrow$
		\cref{eqn:CGS-J}), it suffices to assume $\lambda_n=0$.
	\end{lemma}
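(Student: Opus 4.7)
The plan is to peel off a factor of $(x_1\cdots x_n)^{\lambda_n}$ from both normalized Jack polynomials, thereby reducing the claim to the case where the first partition has vanishing last part. The key observation is that if $\lambda \m \mu$ and $\lambda_n > 0$, then automatically $\mu_n \geq \lambda_n$. Indeed, since $|\lambda|=|\mu|$ and $\sum_{i=1}^{n-1}\lambda_i \geq \sum_{i=1}^{n-1}\mu_i$ by weak majorization at $k=n-1$, one gets $\lambda_n \leq \mu_n$, and in particular $\mu_n>0$ as well. This is what permits \cref{prop:add_col} to be applied to both $P_\lambda$ and $P_\mu$.

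Iterating \cref{prop:add_col} exactly $\lambda_n$ times on each side yields
\begin{align*}
P_\lambda(x;\tau) = (x_1\cdots x_n)^{\lambda_n}\, P_{\lambda - \lambda_n\bm1}(x;\tau), \qquad P_\mu(x;\tau) = (x_1\cdots x_n)^{\lambda_n}\, P_{\mu - \lambda_n\bm1}(x;\tau).
\end{align*}
Setting $x = \bm1$ gives $P_\lambda(\bm1;\tau) = P_{\lambda - \lambda_n\bm1}(\bm1;\tau)$ and $P_\mu(\bm1;\tau) = P_{\mu - \lambda_n\bm1}(\bm1;\tau)$, so writing $\tilde\lambda \coloneqq \lambda - \lambda_n\bm1$ and $\tilde\mu \coloneqq \mu - \lambda_n\bm1$ the normalized difference factors as
\begin{align*}
\frac{P_\lambda(x;\tau)}{P_\lambda(\bm1;\tau)} - \frac{P_\mu(x;\tau)}{P_\mu(\bm1;\tau)}
= (x_1\cdots x_n)^{\lambda_n} \left( \frac{P_{\tilde\lambda}(x;\tau)}{P_{\tilde\lambda}(\bm1;\tau)} - \frac{P_{\tilde\mu}(x;\tau)}{P_{\tilde\mu}(\bm1;\tau)} \right).
\end{align*}

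By construction $\tilde\lambda_n = 0$, and since subtracting a common constant from every coordinate preserves both weak-majorization and equality of sizes, we still have $\tilde\lambda \m \tilde\mu$. For $x \in [0,\infty)^n$ the prefactor $(x_1 \cdots x_n)^{\lambda_n}$ is a non-negative real number, hence lies in $\fp^{\mathbb R}$; and $\fp^{\mathbb R}$ is closed under multiplication by non-negative scalars. Thus, granted \cref{eqn:CGS-J} for the reduced pair $(\tilde\lambda, \tilde\mu)$, the original case $(\lambda, \mu)$ follows, completing the reduction. No step here is delicate: the only substantive ingredients are the elementary inequality $\mu_n \geq \lambda_n$ and the column-removal identity in \cref{prop:add_col}.
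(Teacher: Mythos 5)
Your proof is correct and follows the same route as the paper: it observes that $\lambda \m \mu$ forces $\mu_n \geq \lambda_n$, iterates the column-removal identity (Lemma on $P_\lambda = x_1\cdots x_n\, P_{\lambda-\bm1}$) $\lambda_n$ times on each side to factor out $(x_1\cdots x_n)^{\lambda_n}$, and notes that this nonnegative scalar preserves membership in $\fp^{\R}$. The paper's version is much terser, but the argument is the same; your write-up usefully spells out the iteration, the application to both partitions, and why the normalization constants survive.
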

	
	\begin{proof}
		Let $\lambda\m\mu$, then $\lambda_n\leq\mu_n$. 
		Clearly, $\lambda-\lambda_n\bm1\m\mu-\lambda_n\bm1$, and by assumption, \cref{eqn:CGS-J} holds for the pair $(\lambda-\lambda_n\bm1,\mu-\lambda_n\bm1)$. 
		By \cref{prop:add_col}, \cref{eqn:CGS-J} holds for the pair $(\lambda,\mu)$ as well.
	\end{proof}
	\subsubsection{Raising operator}
	\begin{lemma}[Raising operator]\label{lem:raisingop}
		To prove \cref{conj:CGS-J} ($\lambda\m\mu\Rightarrow$ \cref{eqn:CGS-J}), it suffices to assume that $\lambda$ and $\mu$ are adjacent in the majorization order.
		In that case, $\lambda=R_{ij}(\mu)$ for some $i<j$, where
		$R_{ij}(\mu)=(\dots,\mu_i+1,\dots,\mu_{j}-1,\dots)$ is the
		raising operator, see \cite[pp.~8]{Mac15}.
	\end{lemma}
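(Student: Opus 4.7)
The plan is a routine telescoping argument combined with the classical characterization of covers in the majorization order. Since there are only finitely many partitions of the common size $N = |\lambda| = |\mu|$, majorization restricts to a partial order on a finite set, so any relation $\lambda \m \mu$ with $\lambda \neq \mu$ refines to a saturated chain
\begin{align*}
\lambda = \nu^{(0)} \m \nu^{(1)} \m \cdots \m \nu^{(k)} = \mu,
\end{align*}
where each consecutive pair $(\nu^{(l)}, \nu^{(l+1)})$ is adjacent in the majorization order. Assuming \cref{eqn:CGS-J} for each such adjacent pair, the telescoping identity
\begin{align*}
\frac{P_\lambda(x;\tau)}{P_\lambda(\bm1;\tau)} - \frac{P_\mu(x;\tau)}{P_\mu(\bm1;\tau)} = \sum_{l=0}^{k-1}\left(\frac{P_{\nu^{(l)}}(x;\tau)}{P_{\nu^{(l)}}(\bm1;\tau)} - \frac{P_{\nu^{(l+1)}}(x;\tau)}{P_{\nu^{(l+1)}}(\bm1;\tau)}\right)
\end{align*}
realizes the desired difference, for each $x \in [0,\infty)^n$, as a finite sum of elements of $\fp^{\R}$. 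Since $\fp^{\R}$ is a cone and is closed under addition, the sum lies in $\fp^{\R}$, reducing the conjecture to the adjacent case.

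For the second assertion, I would invoke the standard description of covers in the dominance (majorization) lattice: if $\lambda$ and $\mu$ are adjacent with $\lambda \m \mu$, then $\lambda = R_{ij}(\mu)$ for some $i < j$. Concretely, let $i$ be the smallest index with $\lambda_i \neq \mu_i$; majorization forces $\lambda_i > \mu_i$, and one then takes $j > i$ to be the smallest index with $\lambda_j < \mu_j$. A short calculation shows that the tuple $\nu \coloneqq R_{ij}(\mu)$ is a partition satisfying $\mu \m \nu \m \lambda$, whence adjacency forces $\nu = \lambda$. This gives $\lambda_i = \mu_i + 1$, $\lambda_j = \mu_j - 1$, and agreement elsewhere.

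The main -- and essentially only -- substantive obstacle lies in this second step: verifying that $\nu = R_{ij}(\mu)$ is indeed weakly decreasing (so that it is a genuine partition) and that it majorizes $\mu$ strictly. This requires inspecting partial sums with respect to the chosen indices $i$ and $j$, and exploiting the minimality of both. This combinatorial fact about the dominance lattice is classical and standard in treatments of symmetric functions, but it is the only non-formal ingredient. Once it is in place, the lemma follows from the telescoping identity together with the additive closure of the cone $\fp^{\R}$.
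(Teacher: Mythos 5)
Your proposal is correct and takes essentially the same route as the paper: the first claim is handled by telescoping along a saturated chain in the dominance order together with the fact that $\fp^{\R}$ is closed under addition (the paper compresses this to ``follows by transitivity''), and the second claim is the classical description of covers in dominance order, which the paper simply cites as Macdonald~I.(1.16). The only difference is cosmetic: you sketch the combinatorial verification that $R_{ij}(\mu)$ is a genuine partition majorized by $\lambda$, rightly flagging it as the one non-formal ingredient, whereas the paper delegates this entirely to Macdonald's book.
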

	
	\begin{proof}
		The first claim follows by transitivity.
		The second claim is \cite[I.~(1.16)]{Mac15}.
	\end{proof}
	
	Note that if $\lambda$ and $\mu$ are majorization-adjacent, then they are of exactly two possible forms:
	\begin{itemize}
		\item $\lambda=(\nu,k+l,l,\eta)$ and $\mu=(\nu,k+l-1,l+1,\eta)$, where $k\geq2$, $l\geq0$, and $\nu$ and $\eta$ (possibly empty) are partitions that make $\lambda$ and $\mu$ valid partitions;
		
		\item $\lambda=(\nu,k+1,k^m,k-1,\eta)$ and $\mu=(\nu,k^{m+2},\eta)$, where $k,m\geq1$, and $\nu$ and $\eta$ (possibly empty) are partitions that make $\lambda$ and $\mu$ valid partitions.
	\end{itemize}
	\subsubsection{Appending 0}
	We first recall the integration formula due to Okounkov--Olshanski \cite{OO97}.
	
	Set $\mathbb R^n_{\geq}\coloneqq\Set{x\in\mathbb R^n}{x_1\geq\cdots\geq
		x_n}$. For $x\in\mathbb R^n_{\geq}$ and $y\in\mathbb R^{n-1}_{\geq}$,
	write $y\prec_{\mathrm{int}} x$ (interlacing) if 
	\begin{align}
		x_1\geq y_1 \geq x_2 \geq y_2 \geq \cdots \geq x_{n-1} \geq y_{n-1} \geq x_n.
	\end{align}
	
	Let $\lambda\in\mathcal P_{n-1}$ be a partition of length at most $n-1$, and $(\lambda,0)\in\mathcal P_n$.
	Then \cite[pp.~77]{OO97} (where $\theta$ is our $\tau$) shows that for
	$x\in\mathbb R^n_{\geq}$,
	\begin{align}
		P_{(\lambda,0)}(x) = \frac{1}{C(\lambda,n; \tau)V(x)^{2\tau-1}} \int_{y\prec_{\mathrm{int}}x} P_\lambda(y) V(y) \Pi(x,y)^{\tau-1} \dy,
	\end{align}
	where 
	\begin{align*}
		C(\lambda,n; \tau)	&\coloneqq	\prod_{i=1}^{n-1} \mathop{\mathrm{B}}(\lambda_i+(n-i)\tau,\tau),	\\
		V(x)	&\coloneqq	\prod_{i<j} (x_i-x_j),	\\
		\Pi(x,y)	&\coloneqq	\prod_{i=1}^n\prod_{j=1}^{n-1}|x_i-y_j|	=	\prod_{i\leq j}(x_i-y_j) \prod_{i>j}(y_j-x_i),
	\end{align*}
	and $\mathop{\mathrm{B}}(\alpha,\beta)$ is the Beta function.
	
	Okounkov--Olshanski's formula has a better form when changed to the unital normalization.
	\begin{proposition}
		For $x\in\mathbb R^n_\geq$ and $y\in\mathbb R^{n-1}_\geq$, define the integral kernel 
		$K(x,y)$ by
		\begin{equation}\label{eqn:Kxy}
			K(x,y) \coloneqq K_n(x,y;\tau) = \frac{V(y)\Pi(x,y)^{\tau-1}}{V(x)^{2\tau-1}} \frac{\Gamma(n\tau)}{\Gamma(\tau)^n}.
		\end{equation}
		Then
		\begin{align}
			\frac{P_{(\lambda,0)}(x)}{P_{(\lambda,0)}(\bm1_n)} = \int_{y\prec_{\mathrm{int}} x} \frac{P_{\lambda}(y)}{P_{\lambda}(\bm1_{n-1})} K(x,y) \dy, \qquad \forall \lambda\in\mathcal P_{n-1}, \ x\in\mathbb R^n_\geq.
		\end{align}
		In particular, $\int_{y\prec_{\mathrm{int}} x} K(x,y) \dy=1$.
	\end{proposition}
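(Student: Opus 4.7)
The plan is to renormalize Okounkov--Olshanski's formula directly. Dividing both sides of that formula by $P_{(\lambda,0)}(\bm1_n)$ and inserting the factor $P_\lambda(\bm1_{n-1})/P_\lambda(\bm1_{n-1})$ inside the integrand immediately yields the stated identity, provided the $\lambda$-dependent prefactor
\[
\mathcal N_\lambda \coloneqq \frac{P_\lambda(\bm1_{n-1})}{P_{(\lambda,0)}(\bm1_n)\,C(\lambda,n;\tau)}
\]
equals $\Gamma(n\tau)/\Gamma(\tau)^n$. Thus the entire proposition reduces to verifying this closed form; in particular, $\mathcal N_\lambda$ must turn out to be independent of $\lambda$. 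The ``in particular'' assertion $\int_{y\prec x} K(x,y)\dy = 1$ then follows instantly by taking $\lambda = 0^{n-1}$, so that $P_\lambda \equiv 1$ and $P_{(\lambda,0)} \equiv 1$.

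The main step is to compute $P_{(\lambda,0)}(\bm1_n)/P_\lambda(\bm1_{n-1})$ using Macdonald's general product formula (\cite[VI.(10.20)]{Mac15}, which generalizes \cref{prop:one_row} to arbitrary $\lambda$). Since $\lambda$ and $(\lambda, 0)$ share the same Young diagram, every arm/leg factor cancels in the ratio, and only the coleg-dependent numerators survive. Grouping the result by rows gives
\[
\frac{P_{(\lambda,0)}(\bm1_n)}{P_\lambda(\bm1_{n-1})} \;=\; \prod_{i=1}^{n-1}\frac{\poch{(n-i+1)\tau}{\lambda_i}}{\poch{(n-i)\tau}{\lambda_i}},
\]
where for $i>\ell(\lambda)$ the factor is $1$. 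Now rewrite each rising factorial as $\poch{x}{k}=\Gamma(x+k)/\Gamma(x)$, and expand $C(\lambda,n;\tau)=\prod_{i=1}^{n-1}\Gamma(\lambda_i+(n-i)\tau)\Gamma(\tau)/\Gamma(\lambda_i+(n-i+1)\tau)$. Multiplying these into $\mathcal N_\lambda^{-1}$, all four $\lambda_i$-dependent Gamma factors cancel inside each $i$-term, leaving the pure telescope
\[
\mathcal N_\lambda^{-1} \;=\; \prod_{i=1}^{n-1}\frac{\Gamma((n-i)\tau)\,\Gamma(\tau)}{\Gamma((n-i+1)\tau)} \;=\; \Gamma(\tau)^{n-1}\prod_{k=1}^{n-1}\frac{\Gamma(k\tau)}{\Gamma((k+1)\tau)} \;=\; \frac{\Gamma(\tau)^n}{\Gamma(n\tau)},
\]
which inverts to the required constant $\Gamma(n\tau)/\Gamma(\tau)^n$.

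The only real obstacle is careful bookkeeping: confirming the fourfold cancellation of the $\lambda_i$-dependent Gamma factors, ensuring that the rows $i>\ell(\lambda)$ contribute harmlessly (since both the Pochhammer ratio and the corresponding Beta factor still multiply to $\Gamma((n-i)\tau)\Gamma(\tau)/\Gamma((n-i+1)\tau)$), and lining up the telescoping index $k=n-i$ correctly. Beyond these Gamma-function manipulations, no conceptual difficulty is expected; the entire argument is a one-line renormalization of Okounkov--Olshanski's integral.
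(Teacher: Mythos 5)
Your argument matches the paper's proof step for step: both reduce the claim to showing that the $\lambda$-dependent prefactor collapses to $\Gamma(n\tau)/\Gamma(\tau)^n$, compute the ratio $P_{(\lambda,0)}(\bm 1_n)/P_\lambda(\bm 1_{n-1})$ via Macdonald's product formula \cite[VI.(10.20)]{Mac15}, and telescope the resulting Gamma factors against $C(\lambda,n;\tau)$. In fact your bookkeeping is the careful one: the correct identity is $\frac{P_{(\lambda,0)}(\bm 1_n)}{P_\lambda(\bm 1_{n-1})}\,C(\lambda,n;\tau)=\Gamma(\tau)^n/\Gamma(n\tau)$ (equivalently $\mathcal N_\lambda^{-1}$), and you cancel the four $\lambda_i$-dependent Gamma factors exactly as required, whereas the paper's final displayed line has a small typo writing $\tfrac{1}{C}$ in place of $C$.
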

	
	\begin{proof}
		By \cite[VI.~(10.20)]{Mac15}, we have for any $\mu\in\mathcal P_n$,
		\begin{align*}
			P_\mu(\bm1_{n}) = \prod_{s\in\mu}\frac{a_\mu'(s)+(n-l_\mu'(s))\tau}{a_\mu(s)+(l_\mu(s)+1)\tau}.
		\end{align*}
		Now, 
		\begin{align*}
			\frac{P_{(\lambda,0)}(\bm1_{n}) }{P_\lambda(\bm1_{n-1})} 
			=	\prod_{s\in\lambda}\frac{a_\lambda'(s)+(n-l_\lambda'(s))\tau}{a_\lambda'(s)+(n-1-l_\lambda'(s))\tau} 	
			=	\prod_{i=1}^{n-1} \frac{\poch{(n-i+1)\tau}{\lambda_i}}{\poch{(n-i)\tau}{\lambda_i}}
			=	\prod_{i=1}^{n-1} \frac{\frac{\Gamma(\lambda_i+(n-i+1)\tau)}{\Gamma((n-i+1)\tau)}}{\frac{\Gamma(\lambda_i+(n-i)\tau)}{\Gamma((n-i)\tau)}}
		\end{align*}
		and hence
		\begin{align*}
			\frac{P_{(\lambda,0)}(\bm1_{n}) }{P_\lambda(\bm1_{n-1})} \frac{1}{C(\lambda,n; \tau)} 
			=	\prod_{i=1}^{n-1} \frac{\Gamma(\tau)\Gamma((n-i)\tau)}{\Gamma((n-i+1)\tau)}	
			=	\frac{\Gamma(\tau)^n}{\Gamma(n\tau)}.	&\qedhere
		\end{align*}
	\end{proof}
	
	\begin{lemma}[Appending 0]\label{lem:appending0}
		If \cref{eqn:CGS-J0} holds for the pair $(\lambda,\mu)$ in $n-1$ variables, then \cref{eqn:CGS-J0} holds for the pair $((\lambda,0),(\mu,0))$ in $n$ variables as well.
	\end{lemma}
	\begin{proof}
		Assume that \cref{eqn:CGS-J} holds for the pair $(\lambda,\mu)$ in $n-1$ variables $y=(y_1,\dots,y_{n-1})$.
		When $\tau=0$, Jack polynomials reduce to monomial symmetric polynomials and \cref{eqn:CGS-J0} is simply Muirhead's inequality.
		For every $\tau>0$, $x\in\R_{\geq0}^n$, since the kernel $K(x,y)>0$, we have 
		\begin{align*}
			\frac{P_{(\lambda,0)}(x)}{P_{(\lambda,0)}(\bm1_n)}  - \frac{P_{(\mu,0)}(x)}{P_{(\mu,0)}(\bm1_n)}
			= \int_{y\prec_{\mathrm{int}} x} \left(\frac{P_{\lambda}(y)}{P_{\lambda}(\bm1_{n-1})}-\frac{P_{\mu}(y)}{P_{\mu}(\bm1_{n-1})}\right) K(x,y) \dy>0.
		\end{align*}
		Since the LHS is symmetric in $x$, \cref{eqn:CGS-J0} holds for all $x\in[0,\infty]^n$. 
	\end{proof}
	
	\subsection{An example and the proof}
	\begin{example}\label{ex:CGS}
		Consider $n=2$, $d\geq2$, $\lambda=(d,0)$ and $\mu=(d-1,1)$.
		Suppressing the parameter $\tau$, we have
		\begin{align}
			\frac{P_\lambda(x_1,x_2)}{P_\lambda(1,1)} 
			= \sum_{i=0}^d \binom{d}{i} \frac{\poch{\tau}{i}\poch{\tau}{d-i}}{\poch{2\tau}{d}} x_1^{d-i} x_2^{i}
		\end{align}
		and
		\begin{align}
			\frac{P_\mu(x_1,x_2)}{P_\mu(1,1)} 
			&=	\frac{x_1x_2}{1}\frac{P_{(d-2,0)}(x_1,x_2)}{P_{(d-2,0)}(1,1)} 
			=	\sum_{i=0}^{d-2} \binom{d-2}{i} \frac{\poch{\tau}{i}\poch{\tau}{d-i-2}}{\poch{2\tau}{d-2}} x_1^{d-i-1}x_2^{i+1}.
		\end{align}
		Setting $x=(1,s)$, $s>0$, we have
		\begin{align*}
			f(s)	&\coloneqq	\frac{P_\lambda(1,s)}{P_\lambda(1,1)} - \frac{P_\mu(1,s)}{P_\mu(1,1)}	\\
			&=	\sum_{i=0}^d \binom{d}{i} \frac{\poch{\tau}{i}\poch{\tau}{d-i}}{\poch{2\tau}{d}} s^i 	-\sum_{i=0}^{d-2} \binom{d-2}{i} \frac{\poch{\tau}{i}\poch{\tau}{d-i-2}}{\poch{2\tau}{d-2}} s^{i+1}	\\
			&=	\sum_{i=0}^{d} \left(\binom{d}{i} \frac{\poch{\tau}{i}\poch{\tau}{d-i}}{\poch{2\tau}{d}} - \binom{d-2}{i-1} \frac{\poch{\tau}{i-1}\poch{\tau}{d-i-1}}{\poch{2\tau}{d-2}}\right) s^{i}	
			\eqqcolon \sum_i f_i s^i.
		\end{align*}
		Define $g(s)\coloneqq \sum_{i=0}^{d-2} g_is^i$, where 
		\begin{align*}
			g_i \coloneqq \binom{d-2}{i} \frac{(\tau +d-1)}{\tau} \frac{\poch{\tau}{i+1}\poch{\tau}{d-i-1}}{\poch{2\tau}{d}} \in\fp,\quad 0\leq i\leq d-2.
			\label{eqn:g_i}
		\end{align*}
		Let $g_{-1}=g_{-2}=g_{d-1}=g_d=0$. 
		Then one can show that 
		\begin{align*}
			f_i = g_{i-2}-2g_{i-1}+g_i,
		\end{align*}
		hence
		\begin{align*}
			f(s) = (s-1)^2g(s)\in\fp^{\mathbb R},\quad s>0.
		\end{align*}
		Now, \cref{eqn:CGS-J} follows since
		\begin{align*}
			\frac{P_\lambda(x_1,x_2)}{P_\lambda(1,1)} - \frac{P_\mu(x_1,x_2)}{P_\mu(1,1)} = x_2^d \cdot f(x_1/x_2)\in\fp^{\mathbb R},\quad (x_1,x_2)\in(0,\infty)^n.
		\end{align*}
	\end{example}
	\begin{proof}[Proof of \cref{thm:CGS-KT-2var}]
		In order to prove \cref{conj:CGS-J,conj:CGS-Jwm} in the case of two variables, it suffices to prove $\lambda\m\mu\Rightarrow$ \cref{eqn:CGS-J} by \cref{thm:CGSimpliesKT}.
		By \cref{lem:raisingop}, it suffices to show this implication for the majorization-adjacent pair: $\lambda=(d+l,l)\m\mu=(d+l-1,l+1)$, $l\geq0$, $d\geq2$. 
		By \cref{lem:add_col}, it can be further reduced to $\lambda=(d,0)\m\mu=(d-1,1)$, and this case was proved in \cref{ex:CGS}.
		
		The next assertion -- in which $\mu = {\bf 1}_{|\lambda|}$ -- is proved in \cref{thm:MuirheadSemiring,thm:Muirhead} and the remark in between them.

		We next show the final assertion. By \cref{lem:appending0}, the weaker implication $(4)\Rightarrow(2)$ in \cref{conj:CGS-J} holds for partitions $\lambda$ and $\mu$ of length at most 2 and any number of variables. 
		We are now done by \cref{thm:CGSimpliesKT}.
	\end{proof}

	\subsection{Supporting evidence: Kadell's integral}
	
	We end this section by providing supporting evidence for the
	assertion $(4) \Rightarrow (2)$ in \cref{conj:CGS-J}. We begin by recalling
	Kadell's integral in \cite[VI.\ Ex.~10.7]{Mac15} and \cite{Kad97} (with a different
	normalization). Namely, let $\lambda$ be a partition of length at most
	$n$, as usual, and let $r,s>0$ (or more generally, $r,s\in\mathbb C$ with
	$\Re r,\Re s>0$). Then
	\begin{align}
		I_\lambda=I_\lambda(\tau;r,s)&\coloneqq	\int_{[0,1]^n} \frac{P_\lambda(x;\tau)}{P_\lambda(\bm1;\tau)} |V(x)|^{2\tau} \prod_{i=1}^n x_i^{r-1}(1-x_i)^{s-1} \dx \notag
		\\&=	\prod_{i=1}^n \frac{\Gamma((n-i)\tau+\lambda_{i}+r) \Gamma((i-1)\tau+s) \Gamma(i\tau+1)}{\Gamma((2n-i-1)\tau+\lambda_{i}+r+s)\Gamma(\tau+1)},
	\end{align}
	where $x=(x_1,\dots,x_n)$, $\dx=\dx_1\cdots\dx_n$, and $V(x)=\prod_{i<j} (x_i-x_j)$.
	
	Let $\tilde I_\lambda\coloneqq I_\lambda/I_{(0^n)}$ be the normalized
	Kadell's integral. Then $\tilde I_{(0^n)}=1$ and
	\begin{align}
		\tilde I_\lambda=\tilde I_\lambda(\tau;r,s)
		&=	\prod_{i=1}^n \frac{\Gamma((n-i)\tau+\lambda_{i}+r)}{\Gamma((n-i)\tau+r)} \frac{\Gamma((2n-i-1)\tau+r+s)}{\Gamma((2n-i-1)\tau+\lambda_{i}+r+s)} \notag
		\\&=	\prod_{i=1}^n \frac{\poch{(n-i)\tau+r}{\lambda_i}}{\poch{(2n-i-1)\tau+r+s}{\lambda_i}}
	\end{align}
	is a rational polynomial in $\tau$.
	We now show:
	\begin{proposition}
		If $\lambda\m\mu$, then $\tilde I_\lambda-\tilde I_\mu\in\fp^\R$. 
		If moreover $r$ and $s$ are rational numbers, then $\tilde I_\lambda-\tilde I_\mu\in\fp$.
	\end{proposition}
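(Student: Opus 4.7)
The plan is to exploit the product formula $\tilde I_\nu=\prod_{k=1}^n F_k(\nu_k)$, where
\[
F_k(m)\coloneqq\frac{\poch{a_k}{m}}{\poch{A_k}{m}},\qquad a_k\coloneqq(n-k)\tau+r,\qquad A_k\coloneqq(2n-k-1)\tau+r+s,
\]
together with the raising-operator reduction from \cref{lem:raisingop}. By that reduction, it suffices to prove the claim when $\lambda$ and $\mu$ are adjacent in majorization, so $\lambda=R_{ij}(\mu)$ for some $1\le i<j\le n$ with $\mu_j\ge 1$ (so that $\lambda_j=\mu_j-1\ge 0$).

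The first step is to use $F_k(\lambda_k)=F_k(\mu_k)$ for $k\ne i,j$ to factor out the common entries:
\[
\tilde I_\lambda-\tilde I_\mu=\Bigl(\prod_{k\ne i,j} F_k(\mu_k)\Bigr)\cdot\bigl[F_i(\mu_i+1)F_j(\mu_j-1)-F_i(\mu_i)F_j(\mu_j)\bigr].
\]
The prefactor manifestly lies in $\fp^\R$ (and in $\fp$ when $r,s\in\Q$), because every $\poch{a_k}{m}$ and $\poch{A_k}{m}$ is a product of linear terms of the form $c\tau+d$ with $c,d\ge 0$. Using $F_i(\mu_i+1)/F_i(\mu_i)=(a_i+\mu_i)/(A_i+\mu_i)$ and $F_j(\mu_j-1)/F_j(\mu_j)=(A_j+\mu_j-1)/(a_j+\mu_j-1)$, the bracketed difference rewrites as
\[
\frac{F_i(\mu_i)F_j(\mu_j)}{(A_i+\mu_i)(a_j+\mu_j-1)}\cdot D,\qquad D\coloneqq(a_i+\mu_i)(A_j+\mu_j-1)-(A_i+\mu_i)(a_j+\mu_j-1).
\]

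The heart of the argument is showing that $D\in\fp^\R$. Here the crucial structural observation --- and the only place where Kadell's specific formula really enters --- is that $A_k-a_k=(n-1)\tau+s$ is \emph{independent of~$k$}, while $a_i-a_j=(j-i)\tau$. Writing $\delta\coloneqq(n-1)\tau+s$, the expression $D$ telescopes directly to
\[
D=\delta\cdot\bigl[(a_i+\mu_i)-(a_j+\mu_j-1)\bigr]=\bigl[(n-1)\tau+s\bigr]\cdot\bigl[(j-i)\tau+(\mu_i-\mu_j+1)\bigr],
\]
a product of two linear polynomials in $\tau$ with non-negative coefficients, since $j-i\ge 1$, $s>0$, and $\mu_i\ge\mu_j$ because $\mu$ is a partition. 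The denominator $(A_i+\mu_i)(a_j+\mu_j-1)$ also lies in $\fp^\R$: the constant terms $r+s+\mu_i>0$ and $r+\mu_j-1\ge r>0$ are positive (using $\mu_j\ge 1$), and all $\tau$-coefficients are non-negative. Combining the three positive pieces gives $\tilde I_\lambda-\tilde I_\mu\in\fp^\R$; the same argument with $r,s\in\Q$ yields membership in $\fp$. I do not anticipate a genuine obstacle --- the proof is driven entirely by the symmetric structure of Kadell's formula, and the only real computation is the elementary factorization of $D$.
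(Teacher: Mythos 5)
Your proposal is correct and essentially reproduces the paper's proof: both reduce to the adjacent case $\lambda = R_{ij}(\mu)$ via \cref{lem:raisingop}, and both reduce to the same factored expression (the paper computes $\tilde I_\lambda/\tilde I_\mu - 1$ and multiplies back by $\tilde I_\mu \in \fp^\R$, which is algebraically identical to your direct factoring of $\tilde I_\lambda - \tilde I_\mu$). Your observation that $A_k - a_k = (n-1)\tau+s$ is independent of $k$ is a pleasant way to organize the cancellation, but it yields exactly the numerator $((j-i)\tau + \mu_i - \mu_j + 1)((n-1)\tau + s)$ that the paper obtains by direct expansion.
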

	\begin{proof}
		Note that $\tilde I_\mu\in\fp^\R$ for $r,s>0$.
		By \cref{lem:raisingop}, we may assume there exist $1\leq i<j\leq n$ such that $\lambda=R_{ij}(\mu)$, i.e., $\lambda_i=\mu_i+1$, $\lambda_j=\mu_j-1$ and $\lambda_k=\mu_k$ for $k\neq i,j$.
		Since $\lambda_i-\lambda_j-1\geq0$,
		\begin{align*}
			\tilde I_\lambda/\tilde I_\mu -1
			&=	\frac{(n-i)\tau+\lambda_i+r-1}{(n-j)\tau+\lambda_j+r}
			\cdot \frac{(2n-j-1)\tau+\lambda_j+r+s}{(2n-i-1)\tau+\lambda_i+r+s}-1
			\\&=	\frac{((j-i)\tau+\lambda_i-\lambda_j-1)((n-1)\tau+s)}{((n-j)\tau+\lambda_j+r)((2n-i-1)\tau+\lambda_i+r+s-1)}\in\fp^\R. \qedhere
		\end{align*}
	\end{proof}

	This supports $(4) \Rightarrow (2)$ in \cref{conj:CGS-J}, since
	for each fixed $\tau_0 \in (0,\infty)$, if the difference
	$\frac{P_\lambda(x;\tau_0)}{P_\lambda(\bm1;\tau_0)}-\frac{P_\mu(x;\tau_0)}{P_\mu(\bm1;\tau_0)}$
	is positive over $[0,1]^n$, then so is its integral against Kadell's
	density 
	\begin{align*}
		\frac{1}{I_{(0^n)}} |V(x)|^{2\tau_0} \prod_{i=1}^n x_i^{r-1}(1-x_i)^{s-1}>0.
	\end{align*}

	\section{Jack polynomial differences and the Muirhead semiring}
	
	In this section we prove \cref{conj:CGS-J} (i.e., that $\lambda\m\mu
	\Rightarrow$ \cref{eqn:CGS-J}) in some more cases. In fact, we prove a
	stronger version of the conjecture, in which evaluation positivity is replaced
	by sums and products of Muirhead differences (which are positive by Muirhead's
	inequality).
	
	\subsection{Jack differences in the Muirhead semiring}
	Recall the Muirhead cone and semiring, defined above \cref{thm:majorization-muirhead}.
	
	In \cite[Section 7]{CGS}, the authors wondered if normalized Schur differences  $\frac{s_\lambda(x)}{s_\lambda(\bm1)}-\frac{s_\mu(x)}{s_\mu(\bm1)}$ lie in the Muirhead cone, for $\lambda\m\mu$.
	However, this is not the case as the example below shows.
	It seems that the result would hold if one replaces the Muirhead cone by the semiring---which is strictly larger, as the example also confirms.

	\begin{example}\label{ex:Muirhead}
		Consider $n=3$, $\lambda=(5,2,0)$, and $\mu=(5,1,1)$.
		We have
		\begin{align*}
			70\cdot 
			\Bigg(\frac{s_\lambda(x)}{s_\lambda(\bm{1})}-\frac{s_\mu(x)}{s_\mu(\bm{1})}\Bigg)
			=
			10 M_{(5,2,0)}
			-9 M_{(5,1,1)}
			+10 M_{(4,3,0)}
			-8 M_{(4,2,1)}
			-4 M_{(3,3,1)}
			+M_{(3,2,2)}.
		\end{align*}
		
		Note that the coefficient of $M_{(3,2,2)}$ is positive and $(3,2,2)$ is the minimum (in the majorization order) partition that appears in the expansion of $\frac{s_\lambda(x)}{s_\lambda(\bm{1})}-\frac{s_\mu(x)}{s_\mu(\bm{1})}$, and hence the difference cannot lie in the Muirhead cone.
		
		However, this difference indeed lies in the Muirhead semiring, in fact for Jack polynomials with arbitrary parameter $\tau = \tau_0$. Even more strongly, this holds for the cone $\mathcal{C} = \mathbb{F}_{\geqslant 0}$:
		\begin{align*}
			&\=	
			3(2\tau+1)(3\tau+1)(3\tau+2)(3\tau+4)
			\Bigg(\frac{P_\lambda(x;\tau)}{P_\lambda(\bm{1};\tau)}-\frac{P_\mu(x;\tau)}{P_\mu(\bm{1};\tau)}\Bigg)\\
			&=	(2\tau^{4}+19\tau^{3}+43\tau^{2}+54\tau+24) (M_{(5,2,0)}-M_{(5,1,1)})\\
			&\=	+\tau^{2}(2\tau^{2}+7\tau+7) (M_{(5,2,0)}-M_{(4,3,0)})\\
			&\=	+\tau(14\tau^{3}+61\tau^{2}+75\tau+36) (M_{(4,3,0)}-M_{(4,2,1)})\\
			&\=	+\tau^{3}(10\tau+23) (M_{(4,2,1)}-M_{(3,3,1)})\\
			&\=	+\tau^{3}(10\tau+11) (M_{(3,3,1)}-M_{(3,2,2)})\\
			&\= +15\tau^{2} M_{(1,0,0)}(M_{(3,3,0)}-M_{(3,2,1)})\\
			&\=	6\tau(6\tau+5) (M_{(2,0,0)}-M_{(1,1,0)})(M_{(3,2,0)}-M_{(3,1,1)})\\
			&\=	+6\tau (M_{(2,0,0)}-M_{(1,1,0)})(M_{(3,1,1)}-M_{(2,2,1)}),
		\end{align*}
		which is in the Muirhead semiring $\mathcal M_S(\fp)$.
	\end{example}
		
	Motivated by this calculation, we make the following conjecture. 
	\begin{conj}\label{conj:Muirhead}
		The following are equivalent for partitions $\lambda$ and $\mu$:
		\begin{enumerate}
			\item The Jack difference lies in the Muirhead semiring
			over $\fp$:
			\begin{equation}
				\frac{P_\lambda(x;\tau)}{P_\lambda(\bm1;\tau)} - \frac{P_\mu(x;\tau)}{P_\mu(\bm1;\tau)}\in \mathcal M_S(\fp).
			\end{equation}
			\item For every $\tau_0\in[0,\infty]$, the Jack difference lies in the Muirhead semiring
			over $\R_{\geq0}$:
			\begin{equation}
				\frac{P_\lambda(x;\tau_0)}{P_\lambda(\bm1;\tau_0)} - \frac{P_\mu(x;\tau_0)}{P_\mu(\bm1;\tau_0)}\in \mathcal M_S(\R_{\geq0}).
			\end{equation}
			\item For some $\tau_0\in[0,\infty]$, the Jack difference lies in the Muirhead semiring
			over $\R_{\geq0}$:
			\begin{equation}\label{eqn:Muirhead-conj}
				\frac{P_\lambda(x;\tau_0)}{P_\lambda(\bm1;\tau_0)} - \frac{P_\mu(x;\tau_0)}{P_\mu(\bm1;\tau_0)}\in \mathcal M_S(\R_{\geq0}).
			\end{equation}
			\item $\lambda$ majorizes $\mu$.
		\end{enumerate}
	\end{conj}	

	\begin{theorem}\label{thm:MuirheadSemiring}
		In \cref{conj:Muirhead}, clearly $(1)\Rightarrow(2)\Rightarrow(3)$, and (3) implies \cref{conj:CGS-J}(3), which gives $\lambda\m\mu$.
		So \cref{conj:Muirhead} reduces to showing $(4)\Rightarrow(1)$.
		\qed
	\end{theorem}

	\begin{remark}
		By Muirhead's inequality, \cref{conj:Muirhead}(1) implies \cref{conj:CGS-J}(1).
		Thus \cref{conj:Muirhead} implies \cref{conj:CGS-J}.
	\end{remark}

	We now show
	\begin{theorem}\label{thm:Muirhead}
		\cref{conj:CGS-J,conj:CGS-Jwm,conj:Muirhead} hold for all $\lambda$
		when $\mu=(1^{|\lambda|})$. 
		\cref{conj:Muirhead} moreover holds in two variables.
	\end{theorem}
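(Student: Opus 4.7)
The plan splits according to the two claims of the theorem.

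For the first claim ($\mu = (1^{|\lambda|})$), the key observation is that $(1^k)$, where $k = |\lambda|$, is the dominance-minimum among partitions of size~$k$. Consequently $P_{(1^k)}(x;\tau) = m_{(1^k)}(x) = e_k(x)$, so $\frac{P_{(1^k)}(x;\tau)}{P_{(1^k)}(\bm1;\tau)} = M_{(1^k)}(x)$. By Knop--Sahi monomial positivity, for any fixed $\tau_0 \in [0,\infty]$,
\begin{align*}
\frac{P_\lambda(x;\tau_0)}{P_\lambda(\bm1;\tau_0)} = \sum_{\nu:\, \lambda \m \nu} c_\nu(\tau_0)\, M_\nu(x), \qquad c_\nu(\tau_0) \geq 0,
\end{align*}
with $\sum_\nu c_\nu(\tau_0) = 1$ obtained by evaluating at $x = \bm1$ (using $M_\nu(\bm1) = 1$). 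Subtracting $M_{(1^k)}$ yields
\begin{align*}
\frac{P_\lambda(x;\tau_0)}{P_\lambda(\bm1;\tau_0)} - M_{(1^k)}(x) = \sum_{\nu \neq (1^k)} c_\nu(\tau_0)\, \bigl(M_\nu(x) - M_{(1^k)}(x)\bigr).
\end{align*}
Every partition $\nu \neq (1^k)$ of size $k$ (with $\ell(\nu) \leq n$) satisfies $\nu \m (1^k)$, so each summand is a basic Muirhead difference with non-negative coefficient. This places the difference in $\mathcal M_C(\R_{\geq 0}) \subset \mathcal M_S(\R_{\geq 0})$, proving \cref{conj:Muirhead}; the other two conjectures follow via \cref{thm:CGSimpliesKT} and Muirhead's inequality.

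For the length-$\leq 2$ claim, by transitivity and \cref{lem:raisingop} reduce to majorization-adjacent pairs $\lambda = (a,b,0^{n-2})$, $\mu = (a-1,b+1,0^{n-2})$ with $a \geq b+2$, $b \geq 0$, in $n$ variables ($n \geq 2$). When $n = 2$ and $b \geq 1$, invoke \cref{prop:add_col}:
\begin{align*}
\frac{P_\lambda(x;\tau)}{P_\lambda(\bm1;\tau)} - \frac{P_\mu(x;\tau)}{P_\mu(\bm1;\tau)} = M_{(1^2)}(x)\left(\frac{P_{\lambda - \bm1}(x;\tau)}{P_{\lambda - \bm1}(\bm1;\tau)} - \frac{P_{\mu - \bm1}(x;\tau)}{P_{\mu - \bm1}(\bm1;\tau)}\right).
\end{align*}
Since $\mathcal M_S$ is closed under multiplication by $M_\nu$'s, the $n = 2$ case reduces to $\lambda = (d,0)$, $\mu = (d-1,1)$ with $d = a - b \geq 2$. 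For this base case, invoke the Toy Example: the difference factors as $(x_1 - x_2)^2 \sum_{i=0}^{d-2} g_i\, x_1^{d-2-i} x_2^i$ with $g_i \in \fp$ and the palindromy $g_i = g_{d-2-i}$ (verified from the explicit formula for $g_i$, or equivalently by the $x_1 \leftrightarrow x_2$ symmetry of the difference). Then $(x_1 - x_2)^2 = 2(M_{(2,0)} - M_{(1,1)})$ is a basic Muirhead difference, and palindromy lets us group the second factor as $\sum_{0 \leq i \leq (d-2)/2} g_i\, (x_1^{d-2-i} x_2^i + x_1^i x_2^{d-2-i})$, a non-negative $\fp$-combination of $m_{(d-2-i,i)}$, hence of normalized monomials $M_{(d-2-i,i)}$. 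Their product lies in $\mathcal M_S(\fp^{\mathbb R}) \subset \mathcal M_S(\R_{\geq 0})$.

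The main obstacle lies in the $n > 2$ length-2 case, where $\lambda_n = \mu_n = 0$ so the adding-columns reduction is unavailable. My plan for this is to derive an explicit monomial expansion of $P_{(a,b,0^{n-2})}$ in $n$ variables, either via a 2-row Pieri-type decomposition of $P_{(a)} \cdot P_{(b)}$ or via Stanley's combinatorial formula, and then mimic the Toy Example's factorization: extract an appropriate Muirhead-difference factor (an analog of $(x_1-x_2)^2$) and verify that the residual expansion is a non-negative sum of products in $\mathcal M_S(\fp^{\mathbb R})$. Iterating the Okounkov--Olshanski integral representation (\cref{lem:appending0}) from $n$ down to $2$ variables is not obviously helpful, since integration against a positive kernel need not preserve Muirhead semiring membership (monomials $M_\nu$ in $n-1$ variables pull back to signed combinations of normalized Jack polynomials in $n$ variables). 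I expect the combinatorial bookkeeping needed to identify palindromy-type symmetries in the $n > 2$ expansion to be the central difficulty.
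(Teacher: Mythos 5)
Your proof of the first claim ($\mu=(1^{|\lambda|})$) is essentially identical to the paper's: both observe that $(1^k)$ is dominance-minimal so that $\frac{P_{(1^k)}}{P_{(1^k)}(\bm1)} = M_{(1^k)}$, and expand the normalized Jack polynomial into $M_\nu$'s with nonnegative coefficients summing to~$1$ (the paper cites \cite[VI.~(10.15)]{Mac15} and \cite{KS}; your attribution to ``Knop--Sahi monomial positivity'' is fine). The resulting cone membership then gives \cref{conj:Muirhead}, and \cref{conj:CGS-J,conj:CGS-Jwm} follow as you say.

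For the length-$\leq 2$ claim your route is a mild variant of the paper's. You both reduce, via telescoping and \cref{prop:add_col}, to the base case $\lambda=(d,0)\succcurlyeq\mu=(d-1,1)$ in two variables, and both rest on the Toy Example. The paper finishes by Abel summation, showing the partial sums $\sum_{j\leq i}a_j$ lie in $\fp$; you finish by pulling out the factor $(x_1-x_2)^2 = 2(M_{(2,0)}-M_{(1,1)})$ and using palindromy $g_i = g_{d-2-i}$ to write the cofactor $\sum g_i x_1^{d-2-i}x_2^i$ as a nonnegative combination of $m_{(d-2-i,i)}$, hence of $M_{(d-2-i,i)}$. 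Both arguments are correct and closely related --- in fact the paper's $\sum_{j\leq i}a_j$ is essentially your $g_i$ in disguise --- and both are legitimate ways to finish; yours is arguably the cleaner exhibition of semiring membership.

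Your concern about $n>2$ with length-$2$ partitions is a genuine and sharp observation --- the appending-$0$ reduction (\cref{lem:appending0}) preserves pointwise positivity but not, at least not manifestly, membership in $\mathcal M_S$ --- but it is not needed for this theorem. The paper's intent, as confirmed by \cref{cor:Muirhead} immediately following (stated explicitly with $x=(x_1,x_2)$) and the remark after it noting that \cref{cor:Muirhead} fails for $n=3$, is that the length-$\leq 2$ claim of \cref{thm:Muirhead} is a two-variable statement. So for the statement the paper is actually proving, your argument is complete once you drop the $n>2$ discussion. You might reasonably view the theorem's phrasing as ambiguous on this point, but no part of the paper claims or uses the $n>2$, length-$2$ Muirhead-semiring membership.
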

	\begin{proof}
		If $\lambda \m \mu = (1^{|\lambda|})$, we prove more strongly that the difference lies in the Muirhead cone $\mathcal M_C(\fp)$.
		Write
		\[
		\frac{P_\lambda}{P_\lambda(\bm1)}=\sum_{\nu\preceq\lambda} u_{\lambda\nu} M_\nu,\quad u_{\lambda\nu}\in\fp.
		\]
		Then $\sum_\nu u_{\lambda\nu}=1$ by our choice of normalization. 
		Also, we have $\frac{P_\mu}{P_\mu(\bm1)}=M_\mu$.
		Hence,
		\begin{equation}\label{eqn:Muirhead-cone}
			\frac{P_\lambda}{P_\lambda(\bm1)} - \frac{P_\mu}{P_\mu(\bm1)} = \sum_\nu u_{\lambda\nu} (M_\nu-M_\mu) \in\mathcal M_C(\fp).
		\end{equation}
		This shows \cref{conj:Muirhead}, and hence
		\cref{conj:CGS-J,conj:CGS-Jwm} for $\mu = (1^{|\lambda|})$.
		
		We next come to the case of two variables. Here we
		continue the calculations in \cref{ex:CGS}. Expanding the
		difference of the normalized Jack polynomials in Muirhead differences
		and suppressing the parameter $\tau$ as usual, we obtain:
		\begin{align*}
			\frac{P_\lambda(x)}{P_\lambda(\bm1)} - \frac{P_\mu(x)}{P_\mu(\bm1)} 
			=\sum_{i=0}^{d}
			\left(\binom{d}{i}\frac{\poch{\tau}{i}\poch{\tau}{d-i}}{\poch{2\tau}{d}}
			-
			\binom{d-2}{i-1}\frac{\poch{\tau}{i-1}\poch{\tau}{d-i-1}}{\poch{2\tau}{d-2}}\right)
			x_1^{d-i}x_2^{i}.
		\end{align*}
		Let 
		\begin{align*}
			a_i &\coloneqq 2
			\left(\binom{d}{i}\frac{\poch{\tau}{i}\poch{\tau}{d-i}}{\poch{2\tau}{d}}
			-
			\binom{d-2}{i-1}\frac{\poch{\tau}{i-1}\poch{\tau}{d-i-1}}{\poch{2\tau}{d-2}}\right),
			\quad 0\leq i\leq \lfloor\frac{d-1}{2}\rfloor,\\
			a_{d/2} &\coloneqq
			\binom{d}{d/2}\frac{\poch{\tau}{d/2}\poch{\tau}{d/2}}{\poch{2\tau}{d}}
			-
			\binom{d-2}{d/2-1}\frac{\poch{\tau}{d/2-1}\poch{\tau}{d/2-1}}{\poch{2\tau}{d-2}},
			\text{\quad if $d$ even}.
		\end{align*}
		Then by Abel's lemma,
		\begin{align*}
			\frac{P_\lambda(x)}{P_\lambda(\bm1)} - \frac{P_\mu(x)}{P_\mu(\bm1)} 
			= \sum_{i=0}^{\lfloor\frac{d}{2}\rfloor} a_i M_{(d-i,i)}(x) = \sum_{i=0}^{\lfloor\frac{d}{2}\rfloor-1} \left(\sum_{j\leq i} a_j\right) (M_{(d-i,i)}(x)-M_{(d-i-1,i+1)}(x))
		\end{align*}
		where the summand $M_{(d/2,d/2)}$ is only present when $d$ is even.
		It is not hard to prove by telescoping that
		\begin{align*}
			\sum_{j\leq i} a_j = 2\binom{d-1}{i} \frac{d-1-2 i}{d-1} \frac{\poch{\tau}{i} \poch{\tau}{d-i-1}}{\poch{2\tau}{d}} (\tau+d-1)\in\fp,\quad i=0,\dots,\lfloor\frac{d}{2}\rfloor-1.
		\end{align*}
		Note that if $d$ is even, then $\sum_{j\leq d/2} a_j =0$.
	\end{proof}

	We end this part with a two-variable consequence (in fact reformulation)
	of the final part of \cref{thm:Muirhead}.
	
	\begin{corollary}\label{cor:Muirhead}
		Let $x=(x_1,x_2)$ and fix $\lambda\m\mu$ partitions with at most two parts.
		We have 
		\[
		\frac{1}{V(x)^2} \left( \frac{P_\lambda(x; \tau)}{P_\lambda(\bm1; \tau)} -
		\frac{P_\mu(x; \tau)}{P_\mu(\bm1; \tau)} \right) \in
		\bigoplus_\nu \mathbb{F}_{\geq 0} \cdot s_\nu(x).
		\]
	\end{corollary}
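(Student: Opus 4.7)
The plan is to combine the Muirhead-semiring expansion of the normalized Jack difference established in the proof of \cref{thm:Muirhead} in its $\mathcal P_2$ case, with an explicit two-variable identity that writes every occurring (normalized) Muirhead difference as $\tfrac{1}{2}V(x)^2$ times a single Schur polynomial.

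The first step is to verify the identity
\[
M_{(a,b)}(x_1,x_2) - M_{(a-1,b+1)}(x_1,x_2) = \frac{V(x)^2}{2}\,s_{(a-2,b)}(x_1,x_2), \qquad a\geq b+2,\ b\geq 0,
\]
where $M_\lambda$ denotes the \emph{normalized} monomial as used throughout the paper. In the generic range $a\geq b+3$, both partitions on the left have distinct parts, so one writes $2\bigl(M_{(a,b)}-M_{(a-1,b+1)}\bigr) = x_1^ax_2^b+x_1^bx_2^a-x_1^{a-1}x_2^{b+1}-x_1^{b+1}x_2^{a-1}$; a direct factorization produces one copy of $(x_1-x_2)$ from pairing the ``outer'' and ``inner'' monomials and a second copy from the geometric series $(x_1^{a-b-1}-x_2^{a-b-1})/(x_1-x_2)$, and recognizing the residual factor as the two-variable Schur polynomial $s_{(a-2,b)} = x_1^bx_2^b\sum_{k=0}^{a-b-2}x_1^{a-b-2-k}x_2^k$ yields the identity. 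The boundary case $a=b+2$, where $(a-1,b+1)=(b+1,b+1)$ has equal parts and $M_{(b+1,b+1)}=x_1^{b+1}x_2^{b+1}$, is a short separate computation with the same conclusion ($s_{(a-2,b)}=s_{(b,b)}$).

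With this identity in hand, I would invoke the proof of \cref{thm:Muirhead}: for $\lambda\succcurlyeq\mu$ in $\mathcal P_2$, chaining adjacent majorization pairs via \cref{lem:raisingop} and reducing each step by \cref{prop:add_col} (which contributes a factor $(x_1x_2)^c$, shifting Muirhead indices via $(x_1x_2)^cM_{(p,q)}=M_{(p+c,q+c)}$) delivers an expansion
\[
\frac{P_\lambda(x;\tau)}{P_\lambda(\bm 1;\tau)} - \frac{P_\mu(x;\tau)}{P_\mu(\bm 1;\tau)} = \sum_{(a,b)} A_{(a,b)}\bigl(M_{(a,b)} - M_{(a-1,b+1)}\bigr),
\]
where each coefficient $A_{(a,b)}\in\fp$ and each summand is an adjacent Muirhead difference arising along the chain from $\lambda$ to $\mu$; this is exactly the $\mathcal P_2$-assertion of \cref{thm:Muirhead}, with the Toy Example and Abel summation providing the non-negative coefficients. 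Dividing both sides by $V(x)^2$ and applying the key identity term by term converts this into $\sum_{(a,b)}\tfrac{A_{(a,b)}}{2}\,s_{(a-2,b)}(x_1,x_2)$, which lies in $\bigoplus_\nu \fp\cdot s_\nu(x_1,x_2)$, establishing the corollary.

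The only nontrivial bookkeeping is the boundary case $a=b+2$ of the key identity, where the normalization convention $M_{(c,c)}=x_1^cx_2^c$ (rather than twice that) must be used to obtain the clean form without extra summands; with the normalized $M_\lambda$'s consistent with \cref{thm:Muirhead}, the identity holds uniformly and the reduction goes through without further complication.
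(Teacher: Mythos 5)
Your proposal is correct and follows essentially the same route as the paper. The paper's proof also rests on the two-variable identity $\frac{2}{(x_1-x_2)^2}\left(M_{(a+b+1,a)}-M_{(a+b,a+1)}\right) = s_{(a+b-1,a)}$ for adjacent pairs (which, after the substitution $a\mapsto a+b+1$, $b\mapsto a$, is exactly your key identity) combined with the $\mathcal P_2$ case of \cref{thm:Muirhead}; you merely supply the verification of the identity, including the $a=b+2$ boundary case, in greater detail than the paper, which states it as an explicit computation without spelling it out.
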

	
	\begin{remark}
		Notice that \cref{cor:Muirhead} does not hold for higher $n$---not even
		for $n=3$, even at $\tau = 1$, i.e.\ for Schur polynomials. 
		(This is in parallel to  \cref{conj:Muirhead} also not
		holding for the Muirhead \textit{cone} even with $n=3$ and $\tau=1$, as
		verified in \cref{ex:Muirhead}.)
		For example, working with $(3) \m (1,1,1)$ yields
		\[
		s_{(3)}(x_1, x_2, x_3) - s_{(3)}(1,1,1) s_{(1,1,1)}(x_1, x_2, x_3) =
		\sum_i x_i^3 + \sum_{i<j} (x_i^2 x_j + x_j^2 x_i) - 9 x_1 x_2 x_3,
		\]
		and this is not divisible by $x_i - x_j$ for any $i \neq j$. (To see why,
		set $x_i = x_j$ in the above expression, and note that the expression
		does not vanish.) Nor is it monomial-positive.
	\end{remark}
	
	\begin{proof}[Proof of \cref{cor:Muirhead}]
		In fact for two variables, not only does the conjecture imply this
		assertion, but the converse is also true. This is because---we claim
		that---every normalized Muirhead-difference $M_\lambda(x_1,x_2) -
		M_\mu(x_1,x_2)$ (for $\lambda\m\mu$), when divided by $V(x)^2 =
		(x_1-x_2)^2$, is a Schur-positive polynomial with half-integer
		coefficients.
		This claim reduces by transitivity to majorization-adjacent partitions
		$\lambda = (a+b+1,a), \mu = (a+b,a+1)$ (for $a \geq 0, b \geq 1$), for
		which it holds by an explicit computation:
		\[
		\frac{2}{(x_1-x_2)^2} \left( M_\lambda(x_1,x_2) - M_\mu(x_1,x_2) \right)
		= s_{(a+b-1,a)}(x_1,x_2). \qedhere
		\]
	\end{proof}

	\subsection{Jack polynomial differences with varying parameter}
	
	Since $m_\lambda(x)=P_\lambda(x;\tau=0)$, \cref{eqn:Muirhead-conj} can be viewed as a positivity property involving Jack polynomial differences in two different parameters $\tau$ and $0$.
	Let us generalize this idea further.
	
	Define the \mydef{Jack cone} and \mydef{Jack semiring} over the cone $\fp$ as follows:
	$\mathcal J_C^\sigma(\R_{\geq0})$ consists of $\R_{\geq0}$-linear combinations of
	Jack differences $\Set{\frac{P_\lambda(x;\sigma)}{P_\lambda(\bm1;\sigma)}-\frac{P_\mu(x;\sigma)}{P_\mu(\bm1;\sigma)}}{\lambda\m\mu}$,
	and $\mathcal J_S^\sigma(\R_{\geq0})$ consists of $\R_{\geq0}$-linear combinations
	of products of functions in $\Set{\frac{P_\lambda(x;\sigma)}{P_\lambda(\bm1;\sigma)}-\frac{P_\mu(x;\sigma)}{P_\mu(\bm1;\sigma)}}{\lambda\m\mu} \cup
	\Set{\frac{P_\lambda(x;\sigma)}{P_\lambda(\bm1;\sigma)}}{\lambda}$.
	
	\begin{conj}\label{conj:jack-2para}
		Fix $0\leq\sigma<\tau\leq\infty$. If $\lambda\m\mu$, then the Jack
		difference with parameter $\tau$ is positive in the Jack semiring $\mathcal J_S^\sigma(\R_{\geq0})$:
		\begin{align}\label{eqn:Jack-vary-para}
			\frac{P_\lambda(x;\tau)}{P_\lambda(\bm1;\tau)} - \frac{P_\mu(x;\tau)}{P_\mu(\bm1;\tau)}\in \mathcal J_S^\sigma(\R_{\geq0}).
		\end{align}
	\end{conj}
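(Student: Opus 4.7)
The plan is to extend the strategy used for \cref{conj:Muirhead} (partially established in \cref{thm:Muirhead}) to this parameter-varying setting. Two boundary cases anchor the argument. When $\sigma = \tau$, the conclusion is immediate, since the Jack-$\tau$ difference on the left is itself a generator of $\mathcal J_C^\sigma = \mathcal J_C^\tau$. When $\sigma = 0$, we have $P_\lambda(x;0)/P_\lambda(\bm 1;0) = M_\lambda$, so \cref{conj:jack-2para}, read over the semiring $\mathcal J_S^\sigma$ (which is the intended interpretation, since \cref{remark:Muirhead} already rules out the Muirhead cone version at $\sigma=0$, $\tau=1$), specializes to \cref{conj:Muirhead}. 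In particular \cref{thm:Muirhead} already yields \cref{conj:jack-2para} for $\mu = (1^{|\lambda|})$ and for $\lambda, \mu \in \mathcal P_2$, once every Muirhead generator $M_\nu - M_\mu$ is re-expressed as an $\fp^{\mathbb R}$-multiple of the corresponding Jack-$\sigma$ difference; in two variables this rewriting follows directly from \cref{cor:Muirhead}.

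For general $n$, I would perform the standard reductions of Section~3: by \cref{lem:raisingop} restrict to majorization-adjacent pairs $\lambda = R_{ij}(\mu)$, and by the ``Adding columns'' reduction combined with \cref{prop:add_col}, further assume $\lambda_n = 0$, leaving the two canonical adjacent forms identified there. I would then induct on $n$ using the Okounkov--Olshanski integral representation, whose inductive step reads
\begin{align*}
\frac{P_{(\lambda,0)}(x;\tau)}{P_{(\lambda,0)}(\bm 1_n;\tau)} - \frac{P_{(\mu,0)}(x;\tau)}{P_{(\mu,0)}(\bm 1_n;\tau)} = \int_{y \prec x} \left( \frac{P_\lambda(y;\tau)}{P_\lambda(\bm 1_{n-1};\tau)} - \frac{P_\mu(y;\tau)}{P_\mu(\bm 1_{n-1};\tau)} \right) K_n(x,y;\tau) \dy.
\end{align*}
By the inductive hypothesis the bracketed integrand lies in $\mathcal J_S^\sigma(\R_{\geq 0})$ in the variable $y$, and the kernel $K_n(x,y;\tau)$ is pointwise non-negative on the interlacing region.

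The main obstacle is converting this integrated positivity in $y$ into algebraic semiring positivity in $x$. Since $\mathcal J_S^\sigma(\R_{\geq 0})$ is not merely a set of pointwise non-negative functions, an integral of semiring elements in $y$ against a positive kernel does not automatically lie in $\mathcal J_S^\sigma(\R_{\geq 0})$ as a function of $x$. The most promising route I see is to decompose the ratio $K_n(x,y;\tau)/K_n(x,y;\sigma)$ -- essentially $\Pi(x,y)^{\tau - \sigma}/V(x)^{2(\tau - \sigma)}$ up to a Gamma-factor scalar -- as a Jack-$\sigma$ positive expansion in $x$ for each fixed $y \prec x$. Combined with the $\sigma = \tau$ base case applied to this ratio, such an expansion would reduce matters to expanding $\int_{y \prec x} \frac{P_\nu(y;\sigma)}{P_\nu(\bm 1_{n-1};\sigma)} K_n(x,y;\sigma) \dy$ in Jack-$\sigma$ polynomials, which by Okounkov--Olshanski at parameter $\sigma$ equals $P_{(\nu,0)}(x;\sigma)/P_{(\nu,0)}(\bm 1_n;\sigma)$, a generator. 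The crucial technical step is thus the Jack-$\sigma$ positive expansion of $\Pi(x,y)^{\tau - \sigma}$ (modulo the compensating $V(x)$ power), which I would attack using the Kaneko--Lassalle--Macdonald-type hypergeometric expansions of such products studied in~\cite{OO97, Kan93, Las98}; proving positivity of these expansions in the regime $\tau \geq \sigma$ is the key technical input.
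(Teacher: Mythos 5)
The central bridge in your proposal---re-expressing each Muirhead generator $M_\nu - M_\mu$ as an $\fp^\R$-multiple of the corresponding Jack-$\sigma$ difference---is false, and for reasons the paper itself flags. That re-expression is exactly the ``reverse'' direction of \cref{conj:jack-2para}, with parameters $(\tau',\sigma') = (0,\sigma)$, i.e.\ $\tau' < \sigma'$; the paper notes immediately after stating the conjecture that this direction fails, with $M_{(3)} - M_{(21)} = \tfrac{10}{3}S_{(3)} - 4S_{(21)} + \tfrac{2}{3}S_{(111)}$ (for $n=3$, $\sigma=1$) as a witness. Your claim already fails for two variables: expanding the Jack-$\sigma$ difference for $(4,0),(3,1)$ in Muirhead differences one finds
\begin{equation*}
\frac{P_{(4,0)}(x;\sigma)}{P_{(4,0)}(\bm1;\sigma)} - \frac{P_{(3,1)}(x;\sigma)}{P_{(3,1)}(\bm1;\sigma)}
= a_0 \bigl(M_{(4,0)}-M_{(3,1)}\bigr) + (a_0+a_1)\bigl(M_{(3,1)}-M_{(2,2)}\bigr)
\end{equation*}
with $a_0 = \tfrac{(\sigma+2)(\sigma+3)}{2(2\sigma+1)(2\sigma+3)}>0$ and $a_0+a_1 = \tfrac{\sigma(\sigma+3)}{2(2\sigma+1)(2\sigma+3)}>0$ for $\sigma>0$. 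Inverting (and using that $M_{(3,1)}-M_{(2,2)}$ is itself $\tfrac{2\sigma+1}{\sigma+1}$ times the Jack-$\sigma$ difference for $(3,1),(2,2)$) shows that the unique expansion of $M_{(4,0)}-M_{(3,1)}$ in adjacent Jack-$\sigma$ differences has a strictly negative coefficient, so $M_{(4,0)}-M_{(3,1)}\notin\mathcal J_C^\sigma(\R_{\geq0})$ and it is certainly not a single positive scalar multiple of a Jack-$\sigma$ difference. \cref{cor:Muirhead} does not provide the rewriting you want: its proof shows that the Muirhead generator divided by $V(x)^2$ is a \emph{single} Schur polynomial, while the Jack-$\sigma$ difference divided by $V(x)^2$ is generically a sum of several; these are not proportional once $d\geq 4$.

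The paper's actual proof of the $n=2$ case (\cref{thm:Jack2para}, built on \cref{ex:2para_n=2}) is a direct transition-matrix computation with no detour through $\sigma=0$: it derives a closed form for $M_d(P(\tau),P(\sigma))$ via a binomial identity certified by the Wilf--Zeilberger method, expands the $\tau$-difference in the normalized Jack-$\sigma$ basis, identifies the coefficients up to a positive factor with a quadratic $a_j$ in $j$, and shows by elementary sign analysis (at most one sign change $+\to-$, forcing nonnegative Abel partial sums) that the difference lies in the Jack \emph{cone} $\mathcal J_C^\sigma(\R_{\geq0})$, which is strictly stronger than the semiring claim your route would produce. For the one higher-$n$ case the paper proves, $\lambda=(2,1^{n-2},0)\succcurlyeq(1^n)$, the argument is a one-line scalar identity, again not a Muirhead detour. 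Your Okounkov--Olshanski induction for general $n$ is a sensible direction, and you correctly flag the real obstacle (integrating semiring elements against a positive kernel need not land in the semiring as a function of $x$), but the proposed fix---a Jack-$\sigma$-positive expansion of $\Pi(x,y)^{\tau-\sigma}/V(x)^{2(\tau-\sigma)}$---is asserted, not established, and the paper does not resolve the conjecture beyond the two cases above.
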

	
	The following example shows that \cref{conj:jack-2para} is false when $\tau<\sigma$. 
	\begin{example}
		Consider $n=3$, $\tau=0$, $\sigma=1$, and let
		\[
		F\coloneqq3(M_{(3)}-M_{(21)}) = 10S_{(3)}-12S_{(21)}+2S_{(111)},
		\]
		where $S_\lambda=s_\lambda/s_\lambda(\bm1)$.
		Clearly, it does not belong to the Schur cone.
		We claim that $F$ does not belong to the Schur semiring.

		Note that in degree 3, the Schur semiring is generated by
		\begin{align*}
			f_1 &\coloneqq S_{(1)}^3, \quad
			f_2 \coloneqq S_{(1)} S_{(11)}, \quad
			f_3 \coloneqq S_{(111)},\\
			f_4 &\coloneqq S_{(1)}\bigl(S_{(2)}-S_{(11)}\bigr), \quad
			f_5 \coloneqq S_{(21)}-S_{(111)}, \quad
			f_6 \coloneqq S_{(3)}-S_{(21)} .
		\end{align*}
		It is not hard to see that $f_3,f_5,f_6$ are enough to generate the semiring since		
		\begin{align*}
			f_1 = f_3 + \frac{26}{27}\,f_5 + \frac{10}{27}\,f_6, \quad
			f_2 = f_3 + \frac{8}{9}\,f_5,\quad
			f_4 = \frac{1}{9}\,f_5 + \frac{5}{9}\,f_6.
		\end{align*}
		However, $F=10f_6-2f_5$, hence $F$ does not lie in the Schur semiring.
	\end{example}
	
	Now, we again look at the two-variable case in \cref{ex:CGS}. This time, the computation is more complicated.
	\begin{example}\label{ex:2para_n=2}
		Consider $n=2$, $d\geq2$, $\lambda=(d,0)$ and $\mu=(d-1,1)$.
		In this example, we mostly suppress the argument $x$, and also assume the indices $i,j=0,\dots,\lfloor d/2 \rfloor$ unless otherwise stated.
		
		We prove the stronger statement the difference in \cref{eqn:Jack-vary-para} lies in the Jack cone $\mathcal J_C^\sigma(\R_{\geq0})$. 
		
		The transition matrix from $\left(\frac{P_\lambda(x;\tau)}{P_\lambda(\bm1;\tau)}\right)$ to $(P_\lambda(x;\tau))$ is a diagonal matrix
		\begin{equation}\label{eqn:transition}
			M_d\left(\frac{P(\tau)}{P(\bm1;\tau)},P(\tau)\right)\coloneqq \left(\frac1{P_{(d-i,i)}(\bm1;\tau)}\delta_{ij}\right)_{i,j} = \left(\frac{\poch{\tau}{d-2i}}{\poch{2\tau}{d-i}}\delta_{ij}\right)_{i,j}.
		\end{equation}
		
		By \cref{prop:one_row,prop:add_col}, we have
		\begin{align*}
			P_{(d-i,i)}(x; \tau) =\sum_j \binom{d-2i}{j-i}\frac{\poch{\tau}{j-i}\poch{\tau}{d-j-i}}{\poch{\tau}{d-2i}} \cdot m_{(d-j,j)}(x),
		\end{align*}
		hence the transition matrix from $\left(P_\lambda(x;\tau)\right)$ to $\left(m_\lambda(x)\right)$ is 
		\begin{align*}
			M_d(P(\tau),m)\coloneqq
			\left(	\binom{d-2i}{j-i}\frac{\poch{\tau}{j-i}\poch{\tau}{d-j-i}}{\poch{\tau}{d-2i}}	\right)_{i,j}.
		\end{align*}
		For example, when $d=2,3$, we have 
		\begin{align*}
			M_2(P(\tau),m) = \begin{pmatrix}
				1&\frac{2\tau}{1+\tau}\\&1
			\end{pmatrix},\quad
			M_3(P(\tau),m) = \begin{pmatrix}
				1&\frac{3\tau}{1+\tau}\\&1
			\end{pmatrix},
		\end{align*}
		that is, $P_{(2,0)}(\tau) = m_{(2,0)}+\frac{2\tau}{1+\tau}m_{(1,1)}$
		and $P_{(3,0)}(\tau) = m_{(3,0)}+\frac{3\tau}{1+\tau}m_{(2,1)}$.
		
		Define a matrix 
		\begin{align}
			N_d(\sigma) = \left(	(-1)^{i-j}\binom{d-i-j}{j-i}\frac{d-2i}{d-i-j}\frac{\poch{\sigma+i-j+1}{d-i-j}\poch{\sigma+d-i-j+1}{j-i}}{\poch{\sigma+1}{d-2i}}	\right)_{i,j}.
			\label{eqn:M-sigma-m-inv}
		\end{align}
		(When $i=j=d/2$, the factor $\frac{d-2i}{d-i-j}$ is understood to be 1.)
		For example, when $d=2,3$, we have
		\begin{align*}
			N_d(\sigma) = \begin{pmatrix}
				1&-\frac{2\sigma}{1+\sigma}\\&1
			\end{pmatrix},\quad
			N_d(\sigma) = \begin{pmatrix}
				1&-\frac{3\sigma}{1+\sigma}\\&1
			\end{pmatrix}.
		\end{align*}
		
		We claim that the product matrix $M_d\left(P(\tau),m\right)N_d(\sigma)$ is given by \begin{align}\label{eqn:M-tau-sigma}
			M_d\left(P(\tau),m\right) N_d(\sigma)
			&=	\left(	\frac{(d-2i)!}{(d-2j)!(j-i)!} \frac{\poch{\tau-\sigma}{j-i}}{\poch{\tau+d-i-j}{j-i}\poch{\sigma+d-2j+1}{j-i}}	\right)_{i,j}.
		\end{align}
		(When $i>j$, the entries are understood to be 0.)
		In particular, when setting $\sigma=\tau$, we see that the product $M_d\left(P(\tau),m\right) N_d(\tau)$ is the identity matrix, and hence $N_d(\tau)=M_d\left(P(\tau),m\right)^{-1}$ is the inverse transition matrix, and $M_d\left(P(\tau),m\right) N_d(\sigma) =M_d\left(P(\tau),P(\sigma)\right)$ is the transition matrix between Jack polynomials with different parameters.
		
		Given \cref{eqn:M-sigma-m-inv}, to prove \cref{eqn:M-tau-sigma}, it suffices to check the strictly upper diagonal entries.
		Let $i<j$, and compute the $(i,j)$-entry of the product $M_d\left(P(\tau),m\right)M_d\left(P(\sigma),m\right)^{-1}$:
		\begin{align*}
			&\= \sum_{k=i}^j \binom{d-2i}{k-i}\frac{\poch{\tau}{k-i}\poch{\tau}{d-k-i}}{\poch{\tau}{d-2i}} (-1)^{k-j} \times
			\\&\=\qquad	\binom{d-k-j}{j-k}\frac{d-2k}{d-k-j}\frac{\poch{\sigma+k-j+1}{d-k-j}\poch{\sigma+d-k-j+1}{j-k}}{\poch{\sigma+1}{d-2k}}
			\\&= \frac{(d-2i)!}{(d-2j)!(j-i)!}\frac{1}{\poch{\tau}{d-2i}} \cdot
			\sum_{k=i}^j (-1)^{k-j} \binom{j-i}{k-i} \frac{(d-k-j)!}{(d-k-i)!} \frac{d-2k}{d-k-j} \times
			\\&\=\qquad	\poch{\tau}{k-i}\poch{\tau}{d-k-i} \frac{\poch{\sigma+k-j+1}{d-k-j}\poch{\sigma+d-k-j+1}{j-k}}{\poch{\sigma+1}{d-2k}}
			\\&= \frac{(d-2i)!}{(d-2j)!(j-i)!}\frac{1}{\poch{\tau}{d-2i}} (-1)^{i-j}\cdot \sum_{l=0}^{j-i} (-1)^{l} \binom{j-i}{l} \frac{(d-l-i-j)!}{(d-l-2i)!} \frac{d-2l-2i}{d-l-i-j} \times
			\\&\=\qquad	\poch{\tau}{l}\poch{\tau}{d-l-2i} \frac{\poch{\sigma+l+i-j+1}{d-l-i-j}\poch{\sigma+d-l-i-j+1}{j-i-l}}{\poch{\sigma+1}{d-2l-2i}}
		\end{align*}
		where in the last identity, we set $l=k-i$.
		
		In other words, it suffices to show that 
		\begin{align}\label{eqn:WZ}
			& \sum_{l=0}^{j-i} (-1)^{l} \binom{j-i}{l} \frac{(d-l-i-j)!}{(d-l-2i)!} \frac{d-2l-2i}{d-l-i-j}\times
			\notag\\ &\qquad \poch{\tau}{l}\poch{\tau}{d-l-2i} \frac{\poch{\sigma+l+i-j+1}{d-l-i-j}\poch{\sigma+d-l-i-j+1}{j-i-l}}{\poch{\sigma+1}{d-2l-2i}}
			\\= & (-1)^{i-j} \frac{\poch{\tau}{d-2i}\poch{\tau-\sigma}{j-i}}{\poch{\tau+d-i-j}{j-i}\poch{\sigma+d-2j+1}{j-i}}.\notag
		\end{align}
		Denote by $f(i,j,d;l)$ the summand on the left, and $r(i,j,d)$ the RHS.
		We prove \cref{eqn:WZ} using the Wilf--Zeilberger algorithm \cite[Chapter 7]{a=b}.
		Let 
		\begin{align*}
			&\=	F(i,j,d;l)	\coloneqq	f(i,j,d;l)/r(i,j,d)
			\notag\\&=	(-1)^{j-i-l} \binom{j-i}{l} \frac{(d-l-i-j)!}{(d-l-2i)!} \frac{d-2l-2i}{d-l-i-j} \cdot \frac{1}{\poch{\tau-\sigma}{j-i}}
			\frac{\poch{\tau}{l}\poch{\tau}{d-l-2i}\poch{\tau+d-i-j}{j-i}}{\poch{\tau}{d-2i}} \times
			\notag\\&\=\qquad	\frac{\poch{\sigma+l+i-j+1}{d-l-i-j}\poch{\sigma+d-l-i-j+1}{j-i-l}\poch{\sigma+d-2j+1}{j-i}}{\poch{\sigma+1}{d-2l-2i}}
			\\&=	(-1)^{j-i-l} \binom{j-i}{l} \frac{(d-l-i-j)!}{(d-l-2i)!} \frac{d-2l-2i}{d-l-i-j} \cdot \frac{1}{\poch{\tau-\sigma}{j-i}}
			\frac{\poch{\tau}{l}\poch{\tau}{d-l-2i}\poch{\tau+d-i-j}{j-i}}{\poch{\tau}{d-2i}} \times
			\notag\\&\=\qquad	\frac{\poch{\sigma+l+i-j+1}{d-l-2i}\poch{\sigma+d-l-i-j+1}{j-i-l}}{\poch{\sigma+1}{d-2l-2i}},
		\end{align*}
		and let 
		\begin{align*}
			G(i,j,d;l)\coloneqq F(i,j,d;l)  
			\cdot \frac{l(\tau+d-l-2i)(\sigma-2j-1+d)(\sigma+l+i-j)}{(d-2l-2i)(-j+i+l-1)(\sigma-j+d-i)(-\tau+\sigma-j+i)}.
		\end{align*}

		It can be checked that
		\begin{align*}
			F(i,j+1,d;l)-F(i,j,d;l) = G(i,j,d;l+1)-G(i,j,d;l).
		\end{align*}

		Hence,
		\begin{align*}
			\sum_{l\geq0} (F(i,j+1,d;l)-F(i,j,d;l)) 
			&=	\sum_{l\geq0} (G(i,j,d;l+1)-G(i,j,d;l)) 
			\notag\\&=	(G(i,j,d;j-i+1)-G(i,j,d;0))	=0.
		\end{align*}
		In other words, we have 
		\begin{align*}
			\sum_{l\geq0} F(i,j+1,d;l)=\sum_{l\geq0} F(i,j,d;l).
		\end{align*}

		Note that, when $j=i+1$, we have 
		\begin{align*}
			\sum_{l\geq0} F(i,i+1,d;l) = -\frac{(d-2i+\tau-1)\sigma}{(d-1-2i)(\tau-\sigma)} + \frac{\tau(d-1-2i+\sigma)}{(d-1-2i)(\tau-\sigma)} = 1,
		\end{align*}
		and so for any $j>i$,
		$\sum_{l=0}^{j-i} F(i,j,d;l) =\sum_{l\geq0} F(i,j,d;l) = 1$.
		We have thus proved \cref{eqn:M-tau-sigma}. 
		
		Next, we show (as asserted just before \cref{eqn:transition} that the difference in \cref{eqn:Jack-vary-para} belongs to the Jack cone $\mathcal J_C^\sigma(\R_{\geq0})$. The transition matrix from $\left(\frac{P_\lambda(\tau)}{P_\lambda(\bm1;\tau)}\right)$ to $\left(\frac{P_\lambda(\sigma)}{P_\lambda(\bm1;\sigma)}\right)$ is
		\begin{align*}
			\notag&\=	M_d\left(\frac{P(\tau)}{P(\bm1;\tau)},\frac{P(\sigma)}{P(\bm1;\sigma)}\right)
			\\\notag&=
			M_d\left(\frac{P(\tau)}{P(\bm1;\tau)},P(\tau)\right)
			M_d\left(P(\tau),P(\sigma)\right) 
			M_d\left(\frac{P(\sigma)}{P(\bm1;\sigma)},P(\sigma)\right)^{-1}
			\\&=	\left(	\frac{\poch{\tau}{d-2i}}{\poch{2\tau}{d-2i}}
			\cdot \frac{(d-2i)!}{(d-2j)!(j-i)!} \frac{\poch{\tau-\sigma}{j-i}}{\poch{\tau+d-i-j}{j-i}\poch{\sigma+d-2j+1}{j-i}} 
			\cdot \frac{\poch{2\sigma}{d-2j}}{\poch{\sigma}{d-2j}}	\right).
		\end{align*}
		
		In particular, we are interested in the difference of the first two rows, that is, 
		the coefficient of $\displaystyle \frac{P_{(d-j,j)}(\sigma)}{P_{(d-j,j)}(\bm1;\sigma)}$ in $\displaystyle\frac{P_{(d,0)}(\tau)}{P_{(d,0)}(\bm1;\tau)} - \frac{P_{(d-1,1)}(\tau)}{P_{(d-1,1)}(\bm1;\tau)}$.
		For $j=0$, this coefficient is evidently positive. 
		For $1\leq j\leq \lfloor d/2\rfloor$, this coefficient is
		\begin{align*} 
			\notag&\=	\frac{\poch{\tau}{d}}{\poch{2\tau}{d}} \cdot
			\frac{d!}{(d-2j)!j!} \frac{\poch{\tau-\sigma}{j}}{\poch{\tau+d-j}{j}\poch{\sigma+d-2j+1}{j}} \cdot\frac{\poch{2\sigma}{d-2j}}{\poch{\sigma}{d-2j}}
			\\\notag&\=	- \frac{\poch{\tau}{d-2}}{\poch{2\tau}{d-2}} \cdot
			\frac{(d-2)!}{(d-2j)!(j-1)!} \frac{\poch{\tau-\sigma}{j-1}}{\poch{\tau+d-1-j}{j-1}\poch{\sigma+d-2j+1}{j-1}} \cdot\frac{\poch{2\sigma}{d-2j}}{\poch{\sigma}{d-2j}}
			\\\notag&=	\frac{\poch{\tau}{d-2}}{\poch{2\tau}{d-2}} \cdot
			\frac{(d-2i)!}{(d-2j)!(j-i)!} \frac{\poch{\tau-\sigma}{j-1}}{\poch{\tau+d-1-j}{j-1}\poch{\sigma+d-2j+1}{j-1}} \cdot\frac{\poch{2\sigma}{d-2j}}{\poch{\sigma}{d-2j}}
			\\\notag&\=	\cdot\left(\frac{(\tau+d-2)(\tau+d-1)}{(2\tau+d-2)(2\tau+d-1)} \frac{d(d-1)}{j} \frac{\tau-\sigma+j-1}{\frac{(\tau+d-2)(\tau+d-1)}{\tau+d-j-1} (\sigma+d-j)} -1\right)
			\\\notag&\propto	\frac{d(d-1)}{j} \frac{\tau-\sigma+j-1}{(2\tau+d-2)(2\tau+d-1)} \frac{\tau+d-j-1}{\sigma+d-j} -1
			\\&\propto	2(2\tau-1) j^2 -2 (2\tau-1)(d+\sigma)j + d(d-1)(\tau-\sigma-1)\eqqcolon a_j,
		\end{align*}
		where $\propto$ means that a factor in $\fp$ has been omitted.
		
		Note that the last entry, $a_{\lfloor d/2\rfloor}$, is always negative: for $d=2k$ even, we have $a_k=-2k(\tau+k-1)(2\sigma+1)<0$; for $d=2k+1$ odd, we have $a_k=-2k(\tau+k)(2\sigma+1)<0$.
		
		For $j=1,\dots,\lfloor d/2\rfloor-1$, we have $a_{j+1}-a_j =-2(2\tau-1)(\sigma+d-2j-1)$, hence $(a_j)$ is monotone. 
		
		Now we have the expansion
		\begin{align*}
			\frac{P_{(d,0)}(\tau)}{P_{(d,0)}(\bm1;\tau)} - \frac{P_{(d-1,1)}(\tau)}{P_{(d-1,1)}(\bm1;\tau)}
			= \frac{P_{(d,0)}(\bm1;\sigma)}{P_{(d,0)}(\bm1;\tau)}\frac{P_{(d-j,j)}(\sigma)}{P_{(d-j,j)}(\bm1;\sigma)} + \sum_{j=1}^{\lfloor d/2\rfloor} c_ja_j \frac{P_{(d-j,j)}(\sigma)}{P_{(d-j,j)}(\bm1;\sigma)},
		\end{align*}
		where $c_j>0$ denotes the factor omitted above, and $(a_j)$ is monotone and ends at a negative number. 
		In particular, there exists some $j_0$ such that $a_j\geq 0$ for $j<j_0$ and $a_j\leq 0$ for $j\geq j_0$.
		Such an expansion lies in the Jack cone by Abel's lemma.
	\end{example}
	
	This calculation leads to:
	
	\begin{theorem}\label{thm:Jack2para}
		\cref{conj:jack-2para} holds when $n=2$.
		It also holds for the pair $\lambda=(2,1^{n-2},0)\m\mu=(1^n)$, when $n\geq3$.
	\end{theorem}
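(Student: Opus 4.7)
The plan is to treat the two cases separately, both of which reduce to explicit two-row computations already carried out in the paper.

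For part~(1) (the case $n=2$), the aim is to combine the reductions of Section~3 with Toy Example~\ref{ex:2para_n=2}. By \cref{lem:raisingop} and transitivity in the majorization order, it suffices to treat majorization-adjacent pairs $\lambda\m\mu$. The ``Adding columns'' reduction carries over to the Jack cone $\mathcal J_C^\sigma$ because $\frac{P_{(1^n)}(x;\sigma)}{P_{(1^n)}(\bm1;\sigma)} = x_1\cdots x_n$ for every $\sigma\geq 0$: multiplying a $\sigma$-Jack generator indexed by $(\alpha,\beta)$ by $(x_1\cdots x_n)^k$ produces the $\sigma$-Jack generator indexed by $(\alpha+k\bm1,\beta+k\bm1)$, which is again valid since $\alpha+k\bm1\m\beta+k\bm1\iff\alpha\m\beta$. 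After these two reductions, the remaining case in two variables is exactly $\lambda=(d,0),\mu=(d-1,1)$ for $d\geq 2$, which is precisely the content of Toy Example~\ref{ex:2para_n=2}: the transition-matrix formula~\cref{eqn:M-tau-sigma} (verified by the Wilf--Zeilberger certificate) combined with Abel summation reduces positivity to the sign analysis of the quadratic $a_j$.

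For part~(2), the key observation is that $\lambda = (2, 1^{n-2}, 0)$ is just one step above $\mu = (1^n)$ in the majorization order: the only partitions of $n$ dominated by $\lambda$ are $\lambda$ itself and $(1^n)$, since any $\nu\m\lambda$ has all parts $\leq 2$ and the constraint $\nu_1+\nu_2\leq 3$ forces $\nu\in\{(2,1^{n-2}),(1^n)\}$. Hence the Kostka--monomial expansion of $P_\lambda$ collapses to
\[
P_\lambda(x;\tau) = m_\lambda(x) + K_{\lambda,(1^n)}(\tau)\, m_{(1^n)}(x).
\]
Applying the principal-evaluation product formula \cite[VI.(10.20)]{Mac15} to the arm/leg statistics of the Young diagram of $\lambda$ is expected to yield
\[
P_\lambda(\bm1;\tau) = \frac{n(n-1)(1+n\tau)}{1+(n-1)\tau}, \qquad K_{\lambda,(1^n)}(\tau) = P_\lambda(\bm1;\tau) - m_\lambda(\bm1) = \frac{n(n-1)\tau}{1+(n-1)\tau}.
\]
Since $P_\mu(x;\tau) = m_{(1^n)}(x)$ and $P_\mu(\bm1;\tau) = 1$, an elementary simplification will give
\[
\frac{P_\lambda(x;\tau)}{P_\lambda(\bm1;\tau)} - \frac{P_\mu(x;\tau)}{P_\mu(\bm1;\tau)} = \frac{1+(n-1)\tau}{1+n\tau}\bigl(M_\lambda(x) - M_{(1^n)}(x)\bigr),
\]
and the identical formula at parameter $\sigma$ then exhibits the $\tau$-Jack difference as the positive scalar $\frac{(1+(n-1)\tau)(1+n\sigma)}{(1+n\tau)(1+(n-1)\sigma)}$ times the $\sigma$-Jack difference, which is membership in $\mathcal J_C^\sigma(\R_{\geq 0})$ by definition.

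The main obstacle lies in part~(1): controlling the signs in the two-parameter transition matrix requires the intricate Wilf--Zeilberger identity and the case split on whether $\tau\geq\tfrac{1}{2}$. By contrast, part~(2) is a clean evaluation, once one notices the two-term collapse of the Kostka expansion, which forces both the $\tau$- and $\sigma$-Jack differences to be proportional to the single Muirhead difference $M_\lambda - M_{(1^n)}$.
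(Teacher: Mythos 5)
Your proof is correct and follows essentially the same route as the paper: for $n=2$ it reduces by transitivity to majorization-adjacent two-row pairs, pulls out $(x_1x_2)^i$ using \cref{prop:add_col} to land on the $\lambda=(d,0)\m\mu=(d-1,1)$ case handled in Toy Example~\ref{ex:2para_n=2}, and for $\lambda=(2,1^{n-2},0)\m\mu=(1^n)$ it uses the two-term monomial expansion of $P_\lambda$ to show that both the $\tau$- and $\sigma$-normalized Jack differences are the same positive multiple of $M_\lambda - M_{(1^n)}$.
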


	\begin{proof}
		Again, we prove the stronger statement the difference in \cref{eqn:Jack-vary-para} lies in the Jack cone $\mathcal J_C^\sigma(\R_{\geq0})$.
		First, let $n=2$.
		The example above shows that \cref{conj:jack-2para} holds for the pair $\lambda=(d,0)\m\mu=(d-1,1)$.
		It suffices to show that it holds for any adjacent pair $\lambda=(d-i,i)\m\mu=(d-i-1,i+1)$, as the general case would follow by telescoping. 
		This holds because
		\begin{align*}
			\frac{P_{(d-i,i)}(x;\tau)}{P_{(d-i,i)}(\bm1;\tau)}-\frac{P_{(d-i-1,i+1)}(x;\tau)}{P_{(d-i-1,i+1;\tau)}(\bm1;\tau)}
			&= (x_1x_2)^{i} \cdot \left(\frac{P_{(d-2i,0)}(x;\tau)}{P_{(d-2i,0)}(\bm1;\tau)}-\frac{P_{(d-2i-1,1)}(x;\tau)}{P_{(d-2i-1,1)}(\bm1;\tau)}\right)	
			\\&\in	(x_1x_2)^{i} \cdot 
			\sum_{\nu\m\xi} \R_{\geq0}\cdot \left(\frac{P_\nu(x;\sigma)}{P_\nu(\bm1;\sigma)}-\frac{P_\xi(x;\sigma)}{P_\xi(\bm1;\sigma)}\right)
			\\&=	\sum_{\nu+(i,i)\m\xi+(i,i)} \R_{\geq0}\cdot \left(\frac{P_{\nu+(i,i)}(x;\sigma)}{P_{\nu+(i,i)}(\bm1;\sigma)}-\frac{P_{\xi+(i,i)}(x;\sigma)}{P_{\xi+(i,i)}(\bm1;\sigma)}\right)
			\subset	\mathcal J_C(\R_{\geq0}).
		\end{align*}
		
		Now, let $n\geq3$, $\lambda=(2,1^{n-2},0)$, and $\mu=(1^n)$.
		We have 
		\begin{align*}
			P_\lambda(x; \tau) = m_\lambda(x) + \frac{n(n-1)\tau}{1+(n-1)\tau} m_{\mu}(x),\quad 	P_\mu(x; \tau) &= m_{\mu}(x).
		\end{align*}
		Note that $m_\lambda(\bm1) = n(n-1)$ and $m_\mu(\bm1)=1$, and so 
		\begin{align*}
			\frac{P_\lambda(x;\tau)}{P_\lambda(\bm1;\tau)} = \frac{1+(n-1)\tau}{1+n\tau} M_\lambda(x) + \frac{\tau}{1+n\tau} M_{\mu}(x),\quad
			\frac{P_\mu(x;\tau)}{P_\mu(\bm1;\tau)} = M_{\mu}(x).
		\end{align*}
		Thus,
		\begin{align}\label{eqn:JU2MU}
			\frac{P_\lambda(x;\tau)}{P_\lambda(\bm1;\tau)} - \frac{P_\mu(x;\tau)}{P_\mu(\bm1;\tau)} = \frac{1+(n-1)\tau}{1+n\tau}(M_\lambda(x)-M_\mu(x)).
		\end{align}
		
		Inverting \cref{eqn:JU2MU}, we have 
		\begin{align*}
			M_\lambda(x)-M_\mu(x) = \frac{1+n\sigma}{1+(n-1)\sigma} \left(\frac{P_\lambda(x;\sigma)}{P_\lambda(\bm1;\sigma)} - \frac{P_\mu(x;\sigma)}{P_\mu(\bm1;\sigma)}\right).
		\end{align*}
		Hence,
		\begin{align*}
			\frac{P_\lambda(x;\tau)}{P_\lambda(\bm1;\tau)} - \frac{P_\mu(x;\tau)}{P_\mu(\bm1;\tau)}
			= \frac{1+(n-1)\tau}{1+n\tau}\frac{1+n\sigma}{1+(n-1)\sigma} \left(\frac{P_\lambda(x;\sigma)}{P_\lambda(\bm1;\sigma)} - \frac{P_\mu(x;\sigma)}{P_\mu(\bm1;\sigma)}\right).
		\end{align*}
		Hence in this case, $\frac{P_\lambda(x;\tau)}{P_\lambda(\bm1;\tau)} - \frac{P_\mu(x;\tau)}{P_\mu(\bm1;\tau)}$ is a positive multiple of $\frac{P_\lambda(x;\sigma)}{P_\lambda(\bm1;\sigma)} - \frac{P_\mu(x;\sigma)}{P_\mu(\bm1;\sigma)}$ if $\tau, \sigma \in[0,\infty]$.
	\end{proof}
	
	\section{Macdonald polynomial differences: conjectures and results}
	
	Let $P_\lambda(x)=P_\lambda(x;q,t)$ denote the monic Macdonald polynomials.
	Our goal in this section is to explore if \cref{conj:Muirhead}, characterizing majorization in terms of (normalized) Jack differences belonging to the Muirhead semiring, can be strengthened to Macdonald polynomials---leading to a strengthening of \cref{conj:CGS-J} as well. And indeed, we achieve both of these goals, in the cases where we have shown the conjectures above.
	
	We begin by setting notation.
	Recall that the $q$-Pochhammer, $q$-integer, $q$-factorial, and $q$-binomial are defined respectively as follows:
	\begin{gather}
		\qpoch{a}{q}{k} \coloneqq \prod_{i=0}^{k-1} (1-aq^i), 
		\quad 
		\qnum{k}\coloneqq \sum_{i=0}^{k-1} q^i = \begin{dcases}
			\frac{q^k-1}{q-1},&q\neq1;\\
			k,&q=1,
		\end{dcases}
		\quad
		\qnum{k}! \coloneqq \prod_{i=1}^k \qnum{i},	\\
		\qbinom{m}{n} \coloneqq \frac{\qnum{m}!}{\qnum{n}!\qnum{m-n}!}=\frac{\qpoch{q}{q}{m}}{\qpoch{q}{q}{n}\qpoch{q}{q}{n-m}},\quad k\geq0, m\geq n\geq0.
	\end{gather}

	We now study Macdonald differences for $\lambda \m \mu$, starting with the case of two variables. The first question in extending the aforementioned conjectures is: how should one normalize the Macdonald polynomials. We begin by exploring the natural choice \cite{Mac15} of $x = t^\delta=(t^{n-1},\dots,t,1)$. 
	
	\begin{example}
		Let $n=2$, $\lambda=(d,0)$ and $\mu=(d-1,1)$. We have, using the combinatorial formula \cite[VI.~(7.13$'$)]{Mac15},
		\begin{align}
			P_\lambda(x) = \sum_{i=0}^d \frac{\qpoch{q}{q}{d}}{\qpoch{q}{q}{i}\qpoch{q}{q}{d-i}} \frac{\qpoch{t}{q}{i}\qpoch{t}{q}{d-i}}{\qpoch{t}{q}{d}} x_1^i x_2^{d-i} 
			=	\frac{\qpoch{q}{q}{d}}{\qpoch{t}{q}{d}} \sum_{i=0}^d \frac{\qpoch{t}{q}{i}}{\qpoch{q}{q}{i}} \frac{\qpoch{t}{q}{d-i}}{\qpoch{q}{q}{d-i}} x_1^i x_2^{d-i}.\label{eqn:Mac-d0}
		\end{align}
		By \cite[VI.~(6.11)]{Mac15}, where $n=2$ and $t^\delta=(t,1)$,
		$\displaystyle P_\lambda(t^\delta) = \frac{\qpoch{t^2}{q}{d}}{\qpoch{t}{q}{d}}$.
		Hence, 
		\begin{align*}
			\frac{P_\lambda(x)}{P_\lambda(t^\delta)} = \frac{\qpoch{q}{q}{d}}{\qpoch{t^2}{q}{d}} \sum_{i=0}^d \frac{\qpoch{t}{q}{i}}{\qpoch{q}{q}{i}} \frac{\qpoch{t}{q}{d-i}}{\qpoch{q}{q}{d-i}} x_1^i x_2^{d-i}.
		\end{align*}
		
		For $\mu=(d-1,1)$, we have $P_\mu(x) = x_1x_2 P_{(d-2,0)}(x)$, hence
		\begin{align*}
			\frac{P_\mu(x)}{P_\mu(t^\delta)} 
			&=	\frac{x_1x_2}{t}\frac{\qpoch{q}{q}{d-2}}{\qpoch{t^2}{q}{d-2}} \sum_{i=0}^{d-2} \frac{\qpoch{t}{q}{i}}{\qpoch{q}{q}{i}} \frac{\qpoch{t}{q}{d-2-i}}{\qpoch{q}{q}{d-2-i}} x_1^i x_2^{d-2-i}
			\\&=	\frac{\qpoch{q}{q}{d-2}}{t\qpoch{t^2}{q}{d-2}} \sum_{i=1}^{d-1} \frac{\qpoch{t}{q}{i-1}}{\qpoch{q}{q}{i-1}} \frac{\qpoch{t}{q}{d-1-i}}{\qpoch{q}{q}{d-1-i}} x_1^{i} x_2^{d-i}.
		\end{align*}

		For example, let $d=2$. Then
		$\displaystyle \frac{P_{(1,1)}(x)}{P_{(1,1)}(t^\delta)} = \frac{1}{t}x_1x_2$, and
		\begin{align*}
			\frac{P_{(2,0)}(x)}{P_{(2,0)}(t^\delta)} = \frac{1-qt}{(1+t)(1-qt^2)}(x_1^2+x_2^2) +\frac{(1+q)(1-t)}{(1+t)(1-qt^2)}x_1x_2,
		\end{align*}
		so 
		\begin{align*}
			\frac{P_{(2,0)}(x)}{P_{(2,0)}(t^\delta)}-\frac{P_{(1,1)}(x)}{P_{(1,1)}(t^\delta)} = \frac{(1-qt)}{t(t+1)(1-qt^2)} (tx_1-x_2)(x_1-tx_2).
		\end{align*}
		Assuming $q,t\in(0,1)$ and $x_1,x_2\in(0,\infty)$, then the difference is negative if $t<\frac{x_1}{x_2}<\frac{1}{t}$ and non-negative otherwise.
		
		Numerical experiments suggest that for $d\geq3$, the difference $\frac{P_{(d,0)}(x)}{P_{(d,0)}(t^\delta)}-\frac{P_{(d-1,1)}(x)}{P_{(d-1,1)}(t^\delta)}$ behaves in the same way: assuming $q,t\in(0,1)$ and $x_1,x_2\in(0,\infty)$, then the difference is negative if $t<\frac{x_1}{x_2}<\frac{1}{t}$ and non-negative otherwise.
	\end{example}
	
	Given the above example, we see that the natural choice of normalization, $x=t^\delta$, is not good for positivity. Instead, the previous choice for Jack polynomials, $x=\bm1_n$, will lead to a certain positivity, as demonstrated below.

	\begin{example}\label{example:Mac}
		Continue with $n=2$ and consider the difference $\frac{P_\lambda(x_1,x_2)}{P_\lambda(1,1)} -\frac{P_\mu(x_1,x_2)}{P_\mu(1,1)}$. Now
		\begin{align*}
			P_\lambda(1,1) =	\sum_{i=0}^d \frac{\qpoch{q}{q}{d}}{\qpoch{q}{q}{i}\qpoch{q}{q}{d-i}} \frac{\qpoch{t}{q}{i}\qpoch{t}{q}{d-i}}{\qpoch{t}{q}{d}}
			=	\frac{\qpoch{q}{q}{d}}{\qpoch{t}{q}{d}} \sum_{i=0}^d \frac{\qpoch{t}{q}{i}}{\qpoch{q}{q}{i}} \frac{\qpoch{t}{q}{d-i}}{\qpoch{q}{q}{d-i}}.
		\end{align*}
		Hence we have 
		\begin{align*} 
			\frac{P_\lambda(x_1,x_2)}{P_\lambda(1,1)} = \frac{\displaystyle\sum_{i=0}^d \frac{\qpoch{t}{q}{i}}{\qpoch{q}{q}{i}} \frac{\qpoch{t}{q}{d-i}}{\qpoch{q}{q}{d-i}} x_1^i x_2^{d-i}}{\displaystyle\sum_{i=0}^d \frac{\qpoch{t}{q}{i}}{\qpoch{q}{q}{i}} \frac{\qpoch{t}{q}{d-i}}{\qpoch{q}{q}{d-i}} }
			\text{\quad and \quad}
			\frac{P_\mu(x_1,x_2)}{P_\mu(1,1)} = \frac{\displaystyle\sum_{i=1}^{d-1} \frac{\qpoch{t}{q}{i-1}}{\qpoch{q}{q}{i-1}} \frac{\qpoch{t}{q}{d-1-i}}{\qpoch{q}{q}{d-1-i}} x_1^{i} x_2^{d-i}}{\displaystyle\sum_{i=1}^{d-1} \frac{\qpoch{t}{q}{i-1}}{\qpoch{q}{q}{i-1}} \frac{\qpoch{t}{q}{d-1-i}}{\qpoch{q}{q}{d-1-i}} }.
		\end{align*}
		Their difference is 
		\begin{align*}
			&\=	\frac{P_\lambda(x_1,x_2)}{P_\lambda(1,1)}-\frac{P_\mu(x_1,x_2)}{P_\mu(1,1)}
			\\&=	\frac{\displaystyle\sum_{i=0}^d \frac{\qpoch{t}{q}{i}}{\qpoch{q}{q}{i}} \frac{\qpoch{t}{q}{d-i}}{\qpoch{q}{q}{d-i}} x_1^i x_2^{d-i}}{\displaystyle\sum_{i=0}^d \frac{\qpoch{t}{q}{i}}{\qpoch{q}{q}{i}} \frac{\qpoch{t}{q}{d-i}}{\qpoch{q}{q}{d-i}} } - \frac{\displaystyle\sum_{i=1}^{d-1} \frac{\qpoch{t}{q}{i-1}}{\qpoch{q}{q}{i-1}} \frac{\qpoch{t}{q}{d-1-i}}{\qpoch{q}{q}{d-1-i}} x_1^{i} x_2^{d-i}}{\displaystyle\sum_{i=1}^{d-1} \frac{\qpoch{t}{q}{i-1}}{\qpoch{q}{q}{i-1}} \frac{\qpoch{t}{q}{d-1-i}}{\qpoch{q}{q}{d-1-i}} }
			\\&\propto	\sum_{j=1}^{d-1}\frac{\qpoch{t}{q}{j-1}}{\qpoch{q}{q}{j-1}} \frac{\qpoch{t}{q}{d-1-j}}{\qpoch{q}{q}{d-1-j}}	\cdot	\sum_{i=0}^d \frac{\qpoch{t}{q}{i}}{\qpoch{q}{q}{i}} \frac{\qpoch{t}{q}{d-i}}{\qpoch{q}{q}{d-i}} x_1^i x_2^{d-i}
			\\&\=	-\sum_{j=0}^d \frac{\qpoch{t}{q}{j}}{\qpoch{q}{q}{j}} \frac{\qpoch{t}{q}{d-j}}{\qpoch{q}{q}{d-j}}	\cdot	\sum_{i=1}^{d-1} \frac{\qpoch{t}{q}{i-1}}{\qpoch{q}{q}{i-1}} \frac{\qpoch{t}{q}{d-1-i}}{\qpoch{q}{q}{d-1-i}} x_1^{i} x_2^{d-i}
			\\&=	\sum_{j=1}^{d-1}\frac{\qpoch{t}{q}{j-1}}{\qpoch{q}{q}{j-1}}\frac{\qpoch{t}{q}{d-1-j}}{\qpoch{q}{q}{d-1-j}} \cdot \frac{\qpoch{t}{q}{d}}{\qpoch{q}{q}{d}} (x_1^d+x_2^d) 
			+\sum_{i=1}^{d-1} \left(\frac{\qpoch{t}{q}{i}}{\qpoch{q}{q}{i}} \frac{\qpoch{t}{q}{d-i}}{\qpoch{q}{q}{d-i}} \cdot \sum_{j=1}^{d-1}  \frac{\qpoch{t}{q}{j-1}}{\qpoch{q}{q}{j-1}} \frac{\qpoch{t}{q}{d-1-j}}{\qpoch{q}{q}{d-1-j}} \right.
			\\&\=	 \left. -\frac{\qpoch{t}{q}{i-1}}{\qpoch{q}{q}{i-1}} \frac{\qpoch{t}{q}{d-1-i}}{\qpoch{q}{q}{d-1-i}} \cdot \sum_{j=0}^d  \frac{\qpoch{t}{q}{j}}{\qpoch{q}{q}{j}} \frac{\qpoch{t}{q}{d-j}}{\qpoch{q}{q}{d-j}}\right) x_1^i x_2^{d-i}.
		\end{align*}
		
		Define $a_i$ to be the coefficient such that (via Abel's lemma)
		\begin{align*}
			\frac{P_\lambda(x)}{P_\lambda(\bm1)} - \frac{P_\mu(x)}{P_\mu(\bm1)} 
			=	\sum_{i=0}^{\lfloor\frac{d}{2}\rfloor} a_i M_{(d-i,i)}(x)
			=	\sum_{l=0}^{\lfloor\frac{d}{2}\rfloor-1} \left(\sum_{i=0}^l a_i\right) \left( M_{(d-l,l)}(x)-M_{(d-l-1,l+1)}(x)\right).
		\end{align*}
		Note that the sum $a_0+\cdots+a_{\lfloor\frac{d}{2}\rfloor}=0$, by evaluating at $x=(1,1)$.
		
		For $k\geq1$, let 
		\begin{align*}
			b_k\coloneqq \dfrac{1-tq^{k-1}}{1-q^k},\quad 
			c_k\coloneqq \dfrac{\qpoch{t}{q}{k}}{\qpoch{q}{q}{k}}=\prod_{i=1}^k b_i.
		\end{align*}
		Then we have 
		\begin{align*}
			a_0	&\coloneqq	2c_d \sum_{j=1}^{d-1}c_{j-1}c_{d-j-1},	\\
			a_i	&\coloneqq	2c_ic_{d-i} \sum_{j=1}^{d-1} c_{j-1}c_{d-j-1} -2c_{i-1}c_{d-i-1} \sum_{j=0}^d c_{j}c_{d-j},\quad	1\leq i\leq \lfloor\frac{d-1}2\rfloor,	\\
			a_{d/2}	&\coloneqq	\left(c_{d/2}\right)^2 \sum_{j=1}^{d-1} c_{j-1}c_{d-j-1} 
			-\left(c_{d/2-1}\right)^2 \sum_{j=0}^{d}  c_j c_{d-j},\quad\text{if $d$ is even}.
		\end{align*}
		
		In order to ``upgrade'' \cref{conj:Muirhead} from Jack to Macdonald polynomials, it suffices to show that $\sum_{i=0}^l a_i\geq0$, for $1\leq l\leq \lfloor d/2\rfloor-1$.
		
		\begin{lemma}
			Let $q,t\in(0,1)$.
			\begin{enumerate}
				\item The sequence $(b_i)_{i\geq1}$ is positive and monotone.
				\item For $1\leq l\leq \lfloor (d-1)/2\rfloor$, and $l<s_i<d-l$ for $i=1\dots,l$, the following $(2l+1)$-term polynomial is positive.
				\begin{align}
					&\=	\sum_{i=1}^l \left(\prod_{m=1}^{i-1} b_m \cdot (b_{i}b_{d-i}-b_{s_i}b_{d-s_i}) \cdot \prod_{m=d-l}^{d-i-1} b_{m}\right) + \prod_{m=d-l}^d b_m	\notag
					\\&=	\prod_{m=1}^l b_m\cdot b_{d-l} + 
					\sum_{i=1}^l \left(\prod_{m=1}^{i-1} b_m \cdot (b_{d-i}b_{d-i+1}-b_{s_i}b_{d-s_i}) \cdot \prod_{m=d-l}^{d-i-1} b_{m}\right). \label{eqn:2l+1}
				\end{align}
			\end{enumerate}
		\end{lemma}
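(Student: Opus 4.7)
The plan is to prove Part (1) by direct algebraic computation, and to prove Part (2) by a case analysis on the direction of monotonicity of $(b_k)$ established in Part (1). For Part (1), after combining fractions and simplifying, one obtains
\[
b_{k+1} - b_k = \frac{q^{k-1}(q-1)(q-t)}{(1-q^{k+1})(1-q^k)},
\]
whose denominator is positive (both factors being negative for $q,t>1$), so the sign of $b_{k+1}-b_k$ equals the sign of $q-t$. Positivity of each $b_k$ itself is immediate since its numerator and denominator are both negative. Hence $(b_k)$ is strictly increasing when $q>t$, constant equal to $1$ when $q=t$, and strictly decreasing when $q<t$.

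For Part (2), the plan is to use the second form of the expression in the increasing case and the first form in the decreasing case; in each, term-by-term positivity will suffice. In the increasing case $q>t$: for every $i\in\{1,\dots,l\}$ both indices $d-i$ and $d-i+1$ are at least $d-l$, while the hypothesis $l<s_i<d-l$ forces $s_i$ and $d-s_i$ to lie in $(l,d-l)$. Monotonicity then gives $b_{d-i},b_{d-i+1}\geq b_{d-l}>b_{s_i},b_{d-s_i}$, so $b_{d-i}b_{d-i+1}>b_{s_i}b_{d-s_i}$. Every sum term in the second form is thus positive, and together with the positive base term $\prod_{m=1}^l b_m\cdot b_{d-l}$, the total is positive. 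The $q=t$ case is trivial.

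The decreasing case $q<t$ is the main obstacle: the analogous monotonicity argument fails because $b_i b_{d-i}$ has one large and one small factor, making it incomparable to $b_{s_i}b_{d-s_i}$ by monotonicity alone. The extra ingredient needed is \emph{log-convexity} of $(b_k)$. Treating $k$ as continuous and computing the second derivative yields
\[
(\log b_k)''=(\log q)^2\bigl(h(q^k)-h(tq^{k-1})\bigr), \qquad h(x):=\frac{x}{(x-1)^2},
\]
and since $h'(x)=-(x+1)/(x-1)^3<0$ on $(1,\infty)$ and $tq^{k-1}>q^k$ when $t>q$, we obtain $(\log b_k)''>0$. Continuous log-convexity implies the discrete midpoint inequality $b_{k-1}b_{k+1}\geq b_k^2$, so the symmetric function $\phi(j):=\log b_j+\log b_{d-j}$ is convex in $j$ and satisfies $\phi(j)=\phi(d-j)$; hence $\phi$ is decreasing on $[1,d/2]$. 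For any $1\leq i\leq l$ (note $i\leq l\leq\lfloor(d-1)/2\rfloor<d/2$) and $s_i\in(l,d-l)$, after replacing $s_i$ with $d-s_i$ if necessary so that $s_i\leq d/2$, we have $i\leq l<s_i\leq d/2$, and thus $\phi(i)>\phi(s_i)$, i.e., $b_i b_{d-i}>b_{s_i}b_{d-s_i}$. Every sum term in the first form is now positive, and combined with the positive base $\prod_{m=d-l}^d b_m$, we conclude positivity. The crucial step is establishing log-convexity cleanly; once that structural fact is in hand, the term-by-term analysis is routine.
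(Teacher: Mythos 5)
Your Part~(1) coincides with the paper's argument. In Part~(2), your treatment of the increasing case $q>t$ (term-by-term positivity in the second form, using monotonicity to get $b_{d-i}b_{d-i+1}>b_{s_i}b_{d-s_i}$) also matches the paper. The genuine divergence is in the decreasing case $q<t$: the paper evaluates $b_ib_{d-i}-b_{s_i}b_{d-s_i}$ directly and exhibits the explicit factorization
\[
\frac{-q^{i-1}(t-q)(1-tq^{d-1})(1-q^{s_i-i})(1-q^{d-i-s_i})}{(1-q^{i})(1-q^{d-i-1})(1-q^{s_i})(1-q^{d-s_i-1})},
\]
then reads off the sign of each factor, whereas you establish continuous strict log-convexity of $k\mapsto b_k$ via the sign of $(\log b_k)''$, deduce that $\phi(j)=\log b_j+\log b_{d-j}$ is strictly convex and symmetric about $j=d/2$, hence strictly decreasing on $[1,d/2]$, and conclude $b_ib_{d-i}>b_{s_i}b_{d-s_i}$ from $i\leq l<s_i\leq d/2$. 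Your route is correct and more conceptual: it isolates the structural reason (log-convexity plus the $j\leftrightarrow d-j$ symmetry) that the paper's bare factorization does not explain, and it would transfer verbatim to any log-convex positive sequence. The paper's computation is more self-contained and purely algebraic, avoiding the detour through continuous calculus. Two small points: the lemma also asserts that the two displayed expressions in \eqref{eqn:2l+1} are \emph{equal}, and you use both forms without checking this. The identity is a short telescope --- the $b_{s_i}b_{d-s_i}$ terms cancel, and with $U_i:=\prod_{m=1}^i b_m\cdot\prod_{m=d-l}^{d-i}b_m$ the remaining terms collapse to $U_l-U_0+\prod_{m=d-l}^d b_m-\prod_{m=1}^l b_m\, b_{d-l}=0$ --- but it should be stated, as the paper does. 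Also, you mention the discrete inequality $b_{k-1}b_{k+1}\geq b_k^2$, but your argument never uses it; continuous convexity of $\phi$ applied at the integers $i$ and $s_i$ is all that is needed.
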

		
		\begin{proof}
			(1) It is clear that $b_i>0$. Note that
			\begin{align*}
				b_{i+1}-b_i
				=	\dfrac{1-tq^{i}}{1-q^{i+1}} -\dfrac{1-tq^{i-1}}{1-q^i}
				=	\frac{q^{i-1}(t-q)(1-q)}{(1-q^i)(1-q^{i+1})}.
			\end{align*}
			Hence $(b_i)$ is increasing if $q<t$ and decreasing if $q>t$.
			
			(2)
			We first show that the two expressions are equal. Separate the positive term for $i=l$, and collect the negative term for $i$ and the positive term for $i-1$; now the last term above becomes the positive term for $i=1$.
			If $q=t$, the $(2l+1)$-term polynomial becomes $1$.
			If $q<t$, then by part (1), $b_{d-i}b_{d-i+1}-b_{s_i}b_{d-s_i}>0$.
			If $q>t$, then 
			\begin{align*}
				b_ib_{d-i}-b_{s_i}b_{d-s_i} 
				&=	\frac{1-tq^{i-1}}{1-q^{i}}\frac{1-tq^{d-i-1}}{1-q^{d-i}}-\frac{1-tq^{s_i-1}}{1-q^{s_i}}\frac{1-tq^{d-s_i-1}}{1-q^{d-s_i}}
				\\&=	\frac{q^{i-1}(q-t)(1-tq^{d-1})(1-q^{s_i-i})(1-q^{d-i-s_i})}{(1-q^{i})(1-q^{d-i-1})(1-q^{s_i})(1-q^{d-s_i-1})}>0. \qedhere
			\end{align*}
		\end{proof}
		
		Now, we prove $\sum_{i=0}^l a_i\geq0$, for $1\leq l\leq \lfloor d/2\rfloor-1$.
		
		$\bullet$ Let $d=2k$, $k\geq1$. We have 
		\begin{align*}	
			a_0	&\coloneqq	4c_d\sum_{j=1}^{k-1}c_{j-1}c_{d-j-1} + 2c_{k-1}^2c_d,	\\
			a_i	&\coloneqq	4c_ic_{d-i}\sum_{j=1}^{k-1} c_{j-1}c_{d-j-1} -4c_{i-1}c_{d-i-1} \sum_{j=1}^{k-1} c_{j}c_{d-j}-4c_{i-1}c_{d-i-1}c_d
			\\&\=	+ 2c_ic_{k-1}^2c_{d-i}-2c_{i-1}c_{k}^2c_{d-i-1},\quad	1\leq i\leq k-1,	\\
			a_{k}	&\coloneqq	 2c_{k}^2\sum_{j=1}^{k-1} c_{j-1}c_{d-j-1}
			-2c_{k-1}^2 \sum_{j=1}^{k-1}c_j c_{d-j}-2c_{k-1}^2c_d.
		\end{align*}
		For $1\leq l\leq k-1$, we have
		\begin{align*}
			&\=	a_0+\cdots+a_l 
			\\&=	4c_d\sum_{j=1}^{k-1}c_{j-1}c_{d-j-1} + 2c_{k-1}^2c_d
			\\&\=	+\sum_{i=1}^l 4c_ic_{d-i}\sum_{j=1}^{k-1} c_{j-1}c_{d-j-1} -\sum_{i=1}^l4c_{i-1}c_{d-i-1} \sum_{j=1}^{k-1} c_{j}c_{d-j}-\sum_{i=1}^l4c_{i-1}c_{d-i-1}c_d
			\\&\=	+ \sum_{i=1}^l2c_ic_{k-1}^2c_{d-i} -\sum_{i=1}^l2c_{i-1}c_{k}^2c_{d-i-1}
			\\&=	4c_d\sum_{j=l+1}^{k-1}c_{j-1}c_{d-j-1} 
			+4\sum_{i=1}^l c_ic_{d-i} \sum_{j=l+1}^{k-1} c_{j-1}c_{d-j-1} -4\sum_{i=1}^l c_{i-1}c_{d-i-1} \sum_{j=l+1}^{k-1} c_{j}c_{d-j}
			\\&\=	+ 2c_{k-1}^2c_d + 2\sum_{i=1}^lc_ic_{k-1}^2c_{d-i} -2\sum_{i=1}^lc_{i-1}c_{k}^2c_{d-i-1},\quad	1\leq i\leq k-1,	
			\\&=	4\sum_{j=l+1}^{k-1} \left(c_{j-1}c_{d-j-1}c_d+\sum_{i=1}^l \left(c_ic_{d-i}c_{j-1}c_{d-j-1} - c_{i-1}c_{d-i-1}c_{j}c_{d-j}\right)\right)
			\\&\=	+2\left(c_{k-1}^2c_d+\sum_{i=1}^l \left(c_ic_{k-1}^2c_{d-i} - c_{i-1}c_{k}^2c_{d-i-1}\right)\right)
			\\&=	4 \sum_{j=l+1}^{k-1} c_{j-1}c_{d-j-1}c_{d-l-1} \left(\prod_{m=d-l}^{d}b_m+\sum_{i=1}^l \prod_{m=1}^{i-1}b_m\cdot  \left(b_ib_{d-i} -b_{j}b_{d-j}\right)\cdot \prod_{m=d-l}^{d-i-1}b_m\right)
			\\&\=	+2c_{k-1}^2 c_{d-l-1}\left( \prod_{m=d-l}^{d}b_m + \sum_{i=1}^l \prod_{m=1}^{i-1}b_m\cdot \left(b_ib_{d-i} - b_{k}^2\right)\cdot \prod_{m=d-l}^{d-i-1}b_m \right).
		\end{align*}
		Each expression enclosed in parentheses is of the form in \cref{eqn:2l+1}, hence is positive.
		
		$\bullet$ Let $d=2k+1$, $k\geq1$. We have
		\begin{align*}
			a_0	&\coloneqq	4c_d \sum_{j=1}^{k}c_{j-1}c_{d-j-1},	\\
			a_i	&\coloneqq	4c_ic_{d-i} \sum_{j=1}^{k} c_{j-1}c_{d-j-1} -4c_{i-1}c_{d-i-1} \sum_{j=1}^{k} c_{j}c_{d-j}-4c_{i-1}c_{d-i-1}c_d,\quad	1\leq i\leq k.
		\end{align*}
		For $1\leq l\leq k$, we have
		\begin{align*}
			&\=	\frac14(a_0+\cdots+a_l)
			\\&=	\sum_{j=l+1}^{k} c_{j-1}c_{d-j-1}c_{d} +\sum_{i=1}^l c_ic_{d-i} \sum_{j=l+1}^{k} c_{j-1}c_{d-j-1} -\sum_{i=1}^{l} c_{i-1}c_{d-i-1} \sum_{j=l+1}^{k} c_{j}c_{d-j}
			\\&=	\sum_{j=l+1}^{k} \left(c_{j-1}c_{d-j-1}c_{d} +\sum_{i=1}^l \left(c_ic_{d-i}c_{j-1}c_{d-j-1} -c_{i-1}c_{d-i-1}c_{j}c_{d-j}\right)\right)
			\\&=	\sum_{j=l+1}^{k} c_{j-1}c_{d-j-1}c_{d-l-1} \left(\prod_{m=d-l}^{d}b_m +\sum_{i=1}^l \prod_{m=1}^{i-1}b_m\cdot \left(b_ib_{d-i}-b_{j}b_{d-j}\right)\cdot \prod_{m=d-l}^{d-i-1}b_m\right).
		\end{align*}
		As the expression within each parentheses is of the form in \cref{eqn:2l+1}, it is positive. 
	\end{example}
	
	Thus, we arrive at the following conjecture for normalized Macdonald
	differences in the Muirhead semiring---and we now merge this with the
	Macdonald analogue of the Cuttler--Greene--Skandera conjecture too.
	Let $\kp$ and $\kp^\R$ be the cones for the Macdonald case, as defined in \cref{eqn:fp-Mac}.
	
	\begin{conj}[CGS and Muirhead conjectures for Macdonald polynomials]\label{conj:CGS-Muirhead-Mac}
		The following are equivalent for partitions $\lambda$ and $\mu$ :
		\begin{enumerate}
			\item The Macdonald difference lies in the Muirhead semiring over $\kp$:
			\begin{equation}
				\frac{P_\lambda(x;q,t)}{P_\lambda(\bm1;q,t)} - \frac{P_\mu(x;q,t)}{P_\mu(\bm1;q,t)}\in \mathcal M_S(\kp).
			\end{equation}
			\item We have
			\begin{align}
				\frac{P_\lambda(x;q,t)}{P_\lambda(\bm1;q,t)} -
				\frac{P_\mu(x;q,t)}{P_\mu(\bm1;q,t)}\in\kp^{\mathbb R}, \quad
				\forall x\in[0,\infty)^n.
			\end{align}
			
			\item[$(2')$] For every $q_0,t_0\in(0,1)$, the Macdonald difference lies in
			the Muirhead semiring over $\mathbb{R}_{\geqslant 0}$:
			\begin{equation}
				\frac{P_\lambda(x;q,t)}{P_\lambda(\bm1;q,t)} - \frac{P_\mu(x;q,t)}{P_\mu(\bm1;q,t)}\in \mathcal M_S(\mathbb{R}_{\geqslant 0}).
			\end{equation}
			
			\item For every $q_0, t_0 \in (0,1)$, we have
			\begin{align}
				\frac{P_\lambda(x;q_0,t_0)}{P_\lambda(\bm1;q_0,t_0)} -
				\frac{P_\mu(x;q_0,t_0)}{P_\mu(\bm1;q_0,t_0)}\geqslant 0, \quad
				\forall x\in[0,\infty)^n.
			\end{align}
			
			\item For some $q_0, t_0 \in (0,1)$, we have
			\begin{align}
				\frac{P_\lambda(x;q_0,t_0)}{P_\lambda(\bm1;q_0,t_0)} -
				\frac{P_\mu(x;q_0,t_0)}{P_\mu(\bm1;q_0,t_0)}\geqslant 0, \quad
				\forall x\in(0,1)^n \cup (1,\infty)^n.
			\end{align}
			
			\item $\lambda$ majorizes $\mu$.
		\end{enumerate}
	\end{conj}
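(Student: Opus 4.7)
The plan is to establish the cyclic chain
$(1)\Rightarrow(2)\Rightarrow(3)\Rightarrow(4)\Rightarrow(5)\Rightarrow(1)$, with
the bulk of the work concentrated in the final implication. The first
implication $(1)\Rightarrow(2)$ is immediate: an element of $\mathcal M_S(\fp)$
is, by construction, an $\fp$-combination of products of Muirhead
differences $M_\nu - M_\xi$ (for $\nu\m\xi$) and monomial
averages $M_\nu$, each of which is non-negative on $[0,\infty)^n$ by
Muirhead's inequality, so the result is in $\fp^{\mathbb R}$. The
specialization $(2)\Rightarrow(3)$ and restriction $(3)\Rightarrow(4)$ are
trivial.

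For $(4)\Rightarrow(5)$, I would adapt the log-positive/log-negative
argument used in the proof of \cref{thm:logposneg}. The point is that for
fixed $q_0, t_0 \in (1,\infty)$, the combinatorial formula
\cite[VI.~(7.13$'$)]{Mac15} expresses $P_\lambda(x;q_0,t_0)$ as an
$\mathbb R_{>0}$-combination of the same monomials that appear in
$m_\lambda(x)$. Therefore the leading asymptotics of
$P_\lambda(x(t);q_0,t_0)/P_\lambda(\bm1;q_0,t_0)$ along the scaling
sequences $x(t_m)\in(1,\infty)^n$ and $y(t_m)\in(0,1)^n$ from the proof of
\cref{thm:logposneg} are controlled by $\sum_{i=1}^k \lambda_i$ and
$\sum_{i=0}^{k-1}\lambda_{n-i}$ respectively; the same polynomial-degree
comparison then forces both $\lambda\wm\mu$ and $-\lambda\wm-\mu$, hence
$\lambda\m\mu$.

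The main work is $(5)\Rightarrow(1)$. The first step is to reduce by
transitivity to majorization-adjacent pairs, and then (via the Macdonald
analogue of \cref{prop:add_col}, namely $P_\lambda(x;q,t) = x_1\cdots x_n
\cdot P_{\lambda-\bm1}(x;q,t)$ when $\lambda_n>0$) to the two canonical
forms $\lambda=(\nu,k,0)$, $\mu=(\nu,k-1,1)$ and $\lambda=(\nu,2,1^m,0)$,
$\mu=(\nu,1^{m+2})$ listed in the Jack discussion. Here one must verify
that multiplication by $x_1\cdots x_n$ preserves the Muirhead semiring
$\mathcal M_S(\fp)$, which is immediate since $x_1\cdots x_n = M_{(1^n)}$
itself lies in the semiring. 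Next, the Okounkov--Olshanski-type argument
used to prove \cref{lem:appending0} must be replaced by a Macdonald
analogue -- one expects a $q,t$-integral or a multivariate
Pieri/branching identity with positive coefficients in $\fp$ -- so that
adding a zero part also preserves membership in $\mathcal M_S(\fp)$. Once
these reductions are in place, the core task is to handle the
``irreducible'' adjacent pairs with $\lambda_{n}=0$ and to produce an
explicit Muirhead-semiring decomposition with $\fp$-coefficients.

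The hard part, and the main obstacle, is this core decomposition. For
$n=2$, \cref{example:Mac} already constructs it explicitly: the partial
sums $\sum_{i=0}^l a_i$ of the Muirhead expansion coefficients are shown
to lie in $\fp$ by rewriting them through the positive factorizations
$c_k = \prod b_i$ and then invoking the ``$(2l+1)$-term polynomial''
positivity lemma, which in turn reduces to the sign of $b_{i+1}-b_i =
\frac{q^{i-1}(q-t)(q-1)}{(1-q^i)(1-q^{i+1})}$ in the region $q,t>1$. The
challenge is to find an analogous positive cancellation for $n\geq 3$.
Here the two-variable trick of grouping symmetric pairs $(i, d-i)$ no
longer collapses the sum to a manifestly positive combination, because
the combinatorial formula for $P_\lambda(x;q,t)$ involves sums over
semistandard tableaux weighted by the $\psi$-coefficients of
\cite[VI.~(6.24)]{Mac15}, whose $\fp$-positivity is delicate; and because
$\mathcal M_S(\fp)$ is strictly larger than $\mathcal M_C(\fp)$ (as
already seen for Jack polynomials in \cref{remark:Muirhead}), so one
genuinely needs to produce \emph{products} of Muirhead differences with
$\fp$-coefficients, not merely $\fp$-linear combinations. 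A plausible
strategy is to search for a recursion in $n$ analogous to the branching
integral identity used for Jack in \cref{lem:appending0}, combined with
an inductive decomposition of adjacent differences into telescoping
column-insertion operators $R_{ij}$; but producing the $\fp$-positive
semiring decomposition at each inductive step -- rather than merely
verifying pointwise non-negativity -- is where I expect the real
difficulty to lie.
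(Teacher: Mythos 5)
Your proposal largely mirrors the paper's \cref{thm:CGS-Muirhead-Mac}, which is the partial resolution the paper offers for this (open) conjecture; you correctly treat the full claim as conjectural. The chain $(1)\Rightarrow(2)\Rightarrow(3)\Rightarrow(4)$ and the asymptotic-degree argument for $(4)\Rightarrow(5)$ match the paper. For the required positivity of the monomial expansion of $P_\lambda(x;q_0,t_0)$ at fixed $q_0,t_0>1$ you cite the branching/combinatorial formula \cite[VI.~(7.13$'$)]{Mac15}, whereas the paper goes through Haglund--Haiman--Loehr combined with the duality of \cref{remark:duality}; both routes give the needed positivity, and yours is arguably the more elementary reference for this specific fact. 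The two-variable case of $(5)\Rightarrow(1)$ via telescoping, \cref{example:Mac} for $i=0$, and multiplication by $(x_1x_2)^i = M_{(1,1)}^i$ reproduces the paper's argument.

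Two concrete points. First, you omit the one further case the paper does settle unconditionally: $(5)\Rightarrow(1)$ for $\mu = (1^{|\lambda|})$ and arbitrary $n,\lambda$. Once one knows $P_\lambda/P_\lambda(\bm 1) = \sum_\nu u_{\lambda\nu}M_\nu$ with $u_{\lambda\nu}\in\fp$ and $\sum_\nu u_{\lambda\nu}=1$, the identity~\eqref{eqn:Muirhead-cone} gives $\frac{P_\lambda}{P_\lambda(\bm1)} - M_{(1^{|\lambda|})} = \sum_\nu u_{\lambda\nu}\bigl(M_\nu - M_{(1^{|\lambda|})}\bigr)$, already in the Muirhead cone. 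This is the only all-$n$ instance currently known and should be included. Second, a caution about the ``append a zero part'' step you sketch: the Okounkov--Olshanski integral used in \cref{lem:appending0} can at best transport the scalar positivity of part~(2), not membership in $\mathcal M_S(\fp)$ of part~(1), since it integrates against a positive density rather than producing a decomposition into Muirhead products. The paper never invokes \cref{lem:appending0} or any analogue in the semiring setting; its only reductions there are add-a-column and transitivity, which is why the result stops at $n=2$ and $\mu = (1^{|\lambda|})$. You rightly suspect that a positive $q,t$-branching identity would be the missing tool, but none is on record, and that is precisely where the conjecture remains open for $n\geq 3$.
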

	
	\begin{remark}\label{remark:duality}
		By the well-known duality $P_\lambda(x;q,t) = P_\lambda(x;1/q,1/t)$, one can define instead $\kp\coloneqq\Set{f\in\Q(q,t)}{f(q,t)\geq0,\text{\ if\ } q,t\in(1,\infty)}$ and get an equivalent statement.
	\end{remark}
	
	As promised, we now prove \cref{conj:CGS-Muirhead-Mac} in the cases where
	we have shown it for Jack polynomials above.
	
	\begin{theorem}\label{thm:CGS-Muirhead-Mac}
		In \cref{conj:CGS-Muirhead-Mac}, it is clear that $(1)\Rightarrow(2)\&(2')$, $(2)\Rightarrow(3)$, $(2')\Rightarrow(3)$, and $(3)\Rightarrow(4)\Rightarrow(5)$.
		Hence the conjecture reduces to the implication $(5)\Rightarrow(1)$.
		Moreover, this implication holds for $\mu=(1^{|\lambda|})$; and it also holds for arbitrary $\lambda$ and $\mu$ in the case of two variables.
	\end{theorem}
	
	\begin{proof}
		Note that 
		$(1) \Rightarrow (2)$ and $(2')\Rightarrow(3)$ follows from Muirhead's inequality; 
		$(1)\Rightarrow(2')$ and $(2)\Rightarrow(3)$ follow by specializing $q,t$;
		$(3) \Rightarrow (4)$ is obvious;
		$(4) \Rightarrow (5)$ is similar to \cref{thm:CGSimpliesKT} and is omitted.
		
		Next, we assume $\mu = (1^{|\lambda|})$, with $n, \lambda$ arbitrary and show $(5) \Rightarrow (1)$. Haglund--Haiman--Loehr have shown
		\cite{HHL05} that the integral Macdonald polynomials are
		monomial-positive over $\kp$ for $q,t \in (0,1)$, whence so are the $P_\lambda$. 
		Now the calculation in \cref{eqn:Muirhead-cone} implies~(1).
		
		Finally, let $n=2$ and $\lambda, \mu$ have length at most 2. To show $(5) \Rightarrow (1)$, it suffices via telescoping to do so for majorization-adjacent
		partitions $\lambda=(d-i,i)$ and $\mu=(d-i-1,i+1)$. We showed this in
		\cref{example:Mac} for $i=0$; and we use this to compute for $i>0$:
		\begin{align*}
			\frac{P_\lambda(x;q,t)}{P_\lambda(\bm1;q,t)} - \frac{P_\mu(x;q,t)}{P_\mu(\bm1;q,t)} 
			&=	(x_1x_2)^i \left(\frac{P_{(d-2i,0)}(x;q,t)}{P_{(d-2i,0)}(\bm1;q,t)} - \frac{P_{(d-2i-1,1)}(x;q,t)}{P_{(d-2i-1,1)}(\bm1;q,t)} \right)
			\\&\in	(x_1x_2)^i\cdot \sum_{\nu\m\xi} \kp\cdot(M_\nu(x)-M_\xi(x))
			\\&=	\sum_{\nu+(i,i)\m\xi+(i,i)} \kp\cdot(M_{\nu+(i,i)}(x)-M_{\xi+(i,i)}(x))
			\subset \mathcal M_C(\kp).
			\qedhere
		\end{align*} 
	\end{proof}

	\begin{remark}
		For two variables, the above proof reveals that
		\cref{conj:CGS-Muirhead-Mac}(5) implies the stronger (than~(1) \textit{a
			priori}, and hence equivalent) statement that the normalized Macdonald
		difference for $\lambda, \mu$ lies in the Muirhead \textit{cone}, just
		like for Jack polynomials. As \cref{ex:Muirhead} shows, this is false
		in general for three variables.
	\end{remark}
	
	\subsection*{Acknowledgments}
	S.S.\ and A.K.\ would like to thank IISc Bangalore and IAS Princeton, respectively, for their hospitality during summer 2022 and summer 2023, when this work was initiated and then continued. 
	H.C.\ was partially supported by the Lebowitz Summer Research Fellowship and the SAS Fellowship at Rutgers University. 
	A.K.\ was partially supported by SwarnaJayanti Fellowship grants SB/SJF/2019-20/14 and DST/SJF/MS/2019/3 from SERB and DST (Govt. of India) and by a Shanti Swarup Bhatnagar Award from CSIR (Govt. of India). 
	The research of S.S.\ during this project has been supported by NSF grant DMS-2001537 and Simons Foundation grants 509766 and 00006698.

	\bibliographystyle{alpha}

\end{document}